\newtheorem{lma}{Lemma}[section]
\newaliascnt{thmCt}{lma}
\newtheorem{thm}[thmCt]{Theorem}
\newaliascnt{corCt}{lma}
\newtheorem{cor}[corCt]{Corollary}
\newaliascnt{propCt}{lma}
\newtheorem{prop}[propCt]{Proposition}
\newtheorem*{thm*}{Theorem}
\newtheorem*{cor*}{Corollary}
\newtheorem*{prop*}{Proposition}
\theoremstyle{definition}
\newaliascnt{prgCt}{lma}
\newtheorem{prg}[prgCt]{}
\newaliascnt{dfnCt}{lma}
\newtheorem{dfn}[dfnCt]{Definition}
\newaliascnt{rmkCt}{lma}
\newtheorem{rmk}[rmkCt]{Remark}
\newaliascnt{rmksCt}{lma}
\newaliascnt{ntnCt}{lma}
\newtheorem{ntn}[ntnCt]{Notation}
\newaliascnt{qstCt}{lma}
\newaliascnt{prblCt}{lma}
\newaliascnt{exaCt}{lma}
\newcommand{\W}{\mathrm{W}}
\newcommand{\T}{\mathbb{T}}
\newcommand{\N}{\mathbb{N}}
\newcommand{\Q}{\mathbb{Q}}
\newcommand{\R}{\mathbb{R}}
\newcommand{\Z}{\mathbb{Z}}
\newcommand{\K}{\mathrm{K}}
\DeclareMathOperator{\her}{her}
\DeclareMathOperator{\Inv}{Inv}
\DeclareMathOperator{\Ban}{Ban}
\newcommand{\CatCa}{C^*}
\newcommand{\CatCasr}{C^*_{\text{sr}1}}
\newcommand{\CatPoM}{\mathrm{PoM}}
\newcommand{\CatoM}{\mathrm{Mon}_\leq}
\newcommand{\CatM}{\mathrm{Mon}}
\newcommand{\CatW}{\mathrm{W}}
\newcommand{\CatQ}{\mathrm{Q}}
\DeclareMathOperator{\sr}{sr}
\DeclareMathOperator{\Gr}{Gr}
\DeclareMathOperator{\Aff}{Aff}
\DeclareMathOperator{\Lat}{Lat}
\DeclareMathOperator{\AbGp}{AbGp}
\DeclareMathOperator{\Cu}{Cu}
\DeclareMathOperator{\NCCW}{NCCW}
\DeclareMathOperator{\1NCCW}{1-NCCW}
\DeclareMathOperator{\AI}{AI}
\DeclareMathOperator{\A}{A}
\DeclareMathOperator{\AH}{AH}
\DeclareMathOperator{\Ell}{Ell}
\DeclareMathOperator{\AF}{AF}
\DeclareMathOperator{\Lsc}{Lsc}
\DeclareMathOperator{\supp}{supp}
\DeclareMathOperator{\Hom}{Hom}
\newcommand{\hooklongrightarrow}{\lhook\joinrel\longrightarrow}
\begin{document}
\onehalfspacing
\author{Laurent Cantier}
\title{A unitary Cuntz semigroup for $\CatCa$-algebras of stable rank one}

\address{Laurent Cantier,
Departament de Matem\`{a}tiques \\
Universitat Aut\`{o}noma de Barcelona \\
08193 Bellaterra, Barcelona, Spain
}
\email[]{lcantier@mat.uab.cat}

\thanks{The author was supported by MINECO through the grant BES-2016-077192 and partially supported by the grants MDM-2014-0445 and MTM-2017-83487 at the Centre de Recerca Matem\`atica in Barcelona.}
\keywords{Unitary Cuntz semigroup, K-Theory, $C^*$-algebras, Category Theory}

\begin{abstract}
We introduce a new invariant for $\CatCa$-algebras of stable rank one that merges the Cuntz semigroup information together with the $\K_1$-group information. This semigroup, termed the $\Cu_1$-semigroup, is constructed as equivalence classes of pairs consisting of a positive element in the stabilization of the given $\CatCa$-algebra together with a unitary element of the unitization of the hereditary subalgebra generated by the given positive element. We show that the $\Cu_1$-semigroup is a well-defined continuous functor from the category of $\CatCa$-algebras of stable rank one to a suitable codomain category that we write $\Cu^\sim$. Furthermore, we compute the $\Cu_1$-semigroup of some specific classes of $\CatCa$-algebras. Finally, in the course of our investigation, we show that we can recover functorially $\Cu$, $\K_1$ and $\K_*:=\K_0\oplus\K_1$ from $\Cu_1$. 
 \end{abstract}
\maketitle

\section{Introduction}

The Elliott classification program aims to find a complete invariant for nuclear separable simple $\CatCa$-algebras. The original version of this invariant, written $\Ell(A)$, is based on $\K$-theoretical information together with tracial data. As up to now, adding up decades of research, this invariant has provided satisfactory results for simple, separable, unital, nuclear, $\mathcal{Z}$-stable $\CatCa$-algebras satisfying the Universal Coefficient Theorem assumption. (See, among many others, \cite{GLN15}, \cite{EGLN15}, and \cite{TWW17}.) On the other hand, the Cuntz semigroup has recently appeared to be a key tool to recover regularity properties of a (not necessarily simple) $\CatCa$-algebra. As a matter of fact, it has been proved that the Cuntz semigroup of $\mathcal{C}(\T)\otimes A$ is naturally isomorphic to $\Ell(A)$, for any unital, simple, nuclear, finite, $\mathcal{Z}$-stable $\CatCa$-algebra $A$ (see \cite{ADPS17}).

Classification of non-simple C*-algebras has had an important resurgence in recent years. Whenever considering non-simple $\CatCa$-algebras, the Cuntz semigroup, written $\Cu$, seems to be a good candidate itself for classification. For instance, it has been shown that the Cuntz semigroup classifies any (unital) inductive limits of one-dimensional non-commutative CW complexes whose $\K_1$-group is trivial (see \cite{RobNCCW1}). More concretely, the Cuntz semigroup entirely captures the complete lattice $\Lat(A)$ of ideals of any $\CatCa$-algebra $A$, since we have a natural lattice isomorphism between $\Lat(A)\simeq \Lat(\Cu(A))$, where $\Lat(\Cu(A))$ denotes the set of ideals of $\Cu(A)$. (See \cite[Proposition 5.1.10]{APT14}.) However, a main limitation of the Cuntz semigroup lies within the fact that it fails to capture any $\K_1$ information whatsoever.

In this paper, we introduce a unitary version of the Cuntz semigroup, denoted by $\Cu_1$, for $\CatCa$-algebras of stable rank one. This construction incorporates the $\K_1$ groups of the $\CatCa$-algebra and its ideals to overcome this lack of information in the original construction of the Cuntz semigroup. We here establish the basic functorial properties of this construction.
More concretely we show that:

The $\Cu_1$-semigroup is a continuous functor from the category of $\CatCa$-algebras with stable rank one, that we denote $\CatCasr$, to a certain subcategory of semigroups, written $\Cu^\sim$, modeled after the category $\Cu$ of abstract Cuntz semigroups.
\begin{thm}
The functor $\Cu_1: \CatCasr\longrightarrow \Cu^{\sim}$ is continuous. More precisely, given an inductive system $(A_i,\phi_{ij})_{i\in I}$ in $\CatCasr$, then:
\[
\Cu^\sim-\lim\limits_{\longrightarrow}(\Cu_1(A_i),\Cu_1(\phi_{ij}))\simeq \Cu_1(\CatCasr-\lim\limits_{\longrightarrow}((A_i,\phi_{ij}))).
\]
\end{thm}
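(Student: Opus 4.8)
The plan is to prove continuity by establishing that $\Cu_1$ preserves inductive limits, mimicking the standard strategy used for the ordinary Cuntz semigroup functor $\Cu$. The essential point is that the codomain category $\Cu^\sim$ admits sequential (or more generally, directed) inductive limits, and that these limits are computed in a way compatible with the algebraic limit in $\CatCasr$. First I would recall the construction of the inductive limit $A := \CatCasr\text{-}\lim (A_i,\phi_{ij})$ and note that stable rank one is preserved under inductive limits, so that $A$ genuinely lies in $\CatCasr$ and $\Cu_1(A)$ is defined. Then I would produce the canonical comparison morphism
\[
\Theta \colon \Cu^\sim\text{-}\lim (\Cu_1(A_i),\Cu_1(\phi_{ij})) \longrightarrow \Cu_1(A)
\]
induced by the universal property of the inductive limit in $\Cu^\sim$ applied to the family of maps $\Cu_1(\phi_{i\infty})$, and the bulk of the proof would be to show that $\Theta$ is an isomorphism in $\Cu^\sim$.

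To show $\Theta$ is an isomorphism, I would verify surjectivity and injectivity at the level of the underlying ordered structure, and then check compatibility with the additional unitary data. For surjectivity, I would take an arbitrary element of $\Cu_1(A)$, represented by a pair $(a,u)$ with $a$ a positive element in the stabilization $A \otimes \KK$ and $u$ a unitary in the unitization of $\Her(a)$. Since $A$ is the inductive limit, $a$ can be approximated arbitrarily well by elements coming from the $A_i$, and I would use the density of $\bigcup_i \phi_{i\infty}(A_i)$ together with a semiprojectivity-type or functional-calculus argument to lift both the positive element and, crucially, the unitary $u$ to a compatible pair $(a_i,u_i)$ over some $A_i$, up to the appropriate equivalence. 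Lifting the unitary is where stable rank one is indispensable: it guarantees the connectedness and perturbation properties of unitary groups needed to realize a given class in the limit as the image of a class at a finite stage. For injectivity, I would show that if two pairs over stages $A_i$, $A_j$ become Cuntz-equivalent (in the $\Cu_1$ sense) after mapping into $A$, then they are already identified at some later stage $A_k$; this amounts to showing that the defining comparison relation of $\Cu_1$ is ``continuous'' with respect to approximations, which again reduces to approximating the implementing elements and unitary conjugacies at finite stages.

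The hard part will be handling the unitary component of the invariant, since the ordinary Cuntz semigroup continuity argument controls only the positive elements via Cuntz comparison, whereas here I must simultaneously track the $\K_1$-type data encoded by the unitaries and ensure that the two are compatible under the limit. Concretely, the difficulty is that approximating a positive element $a$ by $a_i$ changes the hereditary subalgebra $\Her(a)$, so the unitary $u$ does not literally live in $\Her(a_i)^\sim$; I would need a stability lemma asserting that a unitary in $\Her(a)^\sim$ can be perturbed to a unitary in $\Her(a_i)^\sim$ that represents the same $\Cu_1$-class, with control guaranteed by stable rank one (via density of invertibles and the resulting good behaviour of unitary groups of hereditary subalgebras). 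Once such a lifting/perturbation lemma is in place, I expect the remainder of the argument to follow the template for $\Cu$ closely, with the order-theoretic compatibility of $\Theta$ and its inverse being routine verifications in $\Cu^\sim$. I would therefore organize the proof around this key lemma, prove it first, and then assemble surjectivity and injectivity of $\Theta$ from it, concluding that $\Theta$ is the desired isomorphism in $\Cu^\sim$.
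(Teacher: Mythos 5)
Your overall strategy (a canonical comparison morphism $\Theta$ checked to be an isomorphism, with the real work being a unitary-lifting lemma) identifies the right analytic ingredients, and your ``stability lemma'' for perturbing unitaries across changing hereditary subalgebras is indeed the heart of the matter: it corresponds to the paper's \autoref{cor:existencelift} and to the $\epsilon<2$ approximation of unitaries in $\her(a)^\sim$ by unitaries coming from finite stages. However, your route differs substantially from the paper's, and it contains a genuine gap in the categorical part. The paper does not argue directly in $\Cu^\sim$: it first introduces a precompleted functor $\W_1$ on local $\CatCa$-algebras with values in a category $\CatW^\sim$ whose inductive limits are \emph{algebraic}, proves continuity of $\W_1$ there (where surjectivity and injectivity of the comparison map really do reduce to ``everything is witnessed at a finite stage''), shows that $\Cu^\sim$ is a reflective subcategory of $\CatW^\sim$ with reflector $\gamma^\sim$, and finally identifies $\Cu_1(\overline{A})\simeq\gamma^\sim(\W_1(A))$ naturally. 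Continuity of $\Cu_1$ then follows formally.

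The gap in your proposal is that you treat the inductive limit in $\Cu^\sim$ as if it were algebraic. Your injectivity step asserts that two classes which become identified in $\Cu_1(A)$ must already be identified at some finite stage $A_k$, and your surjectivity step represents an arbitrary element of the limit by data at a finite stage. This is the correct criterion for the colimit in $\CatM$ or $\CatW^\sim$, but the colimit in $\Cu^\sim$ is obtained from the algebraic colimit by a nontrivial completion (axioms (O1)--(O4) force one to adjoin suprema), so its elements are in general only suprema of $\ll$-increasing sequences of images from finite stages, and equality in the limit is not detected at a finite stage but only after passing to such approximations. Relatedly, you must first establish that $\Cu^\sim$ \emph{has} inductive limits before writing down $\Theta$; the paper obtains this precisely from the reflective-subcategory argument. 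None of this is fatal --- one could in principle run a direct argument in the style of the original sequential-continuity proof for $\Cu$ --- but as written your surjectivity/injectivity scheme would fail for the actual $\Cu^\sim$-limit, and repairing it essentially forces you to reconstruct the $\prec$-embedding and dense-image characterization that the $\W_1$/$\gamma^\sim$ machinery packages for you.
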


We then recover functorially the $\K_*$-group from the $\Cu_1$-semigroup as follows:
\begin{thm}
There exists a functor  \vspace{-0,70cm}\[
	\begin{array}{ll}
	\hspace{0cm} H_*:\Cu^\sim_{u}\longrightarrow \AbGp_{u}\\
	\hspace{0,5cm} (S,u)\longmapsto (\Gr(S_{c}),S_{c},u)\\
		\hspace{1,05cm} \alpha \longmapsto \Gr(\alpha_{c})
	\end{array}
\] 
 that yields a natural isomorphism $\eta_*:H_*\circ\Cu_{1,u}\simeq \K_*$.
\end{thm}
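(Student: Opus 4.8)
The plan is to first check that $H_*$ is a well-defined functor and then to produce the natural isomorphism $\eta_*$ by computing $H_*\circ\Cu_{1,u}$ concretely on a unital stable rank one $\CatCa$-algebra. For the functoriality of $H_*$, given $(S,u)\in\Cu^\sim_u$ the subset $S_c$ of compact elements is a submonoid containing $u$, so the group completion $\Gr(S_c)$ is defined and $(\Gr(S_c),S_c,u)$ is an object of $\AbGp_u$, with positive cone the image of $S_c$ and distinguished positive unit $u$. A morphism $\alpha$ in $\Cu^\sim_u$ preserves compact elements, hence restricts to a monoid map $\alpha_c\colon S_c\to T_c$, and applying the functorial group completion yields $\Gr(\alpha_c)$; preservation of identities and composition is then immediate from functoriality of restriction-to-compacts and of $\Gr$.

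For the natural isomorphism, fix a unital $A\in\CatCasr$. By the construction of $\Cu_1$ the compact elements of $\Cu_1(A)$ are represented by pairs $(p,w)$ with $p$ a projection in $A\otimes\KK$ and $w$ a unitary in the unitization of $p(A\otimes\KK)p$, addition being orthogonal sum, and the distinguished unit of $\Cu_{1,u}(A)$ is $[(1_A,1)]$. I define $\eta_{*,A}$ on generators by $[(p,w)]\longmapsto([p]_0,[w]_1)\in\K_0(A)\oplus\K_1(A)$, where $[w]_1$ is the image of the class of $w$ under $p(A\otimes\KK)p\hookrightarrow A\otimes\KK$; this is additive, sends the unit to $([1_A]_0,0)$, carries the cone to the natural cone on $\K_*$, and extends to a homomorphism $\Gr((\Cu_1(A))_c)\to\K_*(A)$. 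To see it is an isomorphism I invoke the two standard consequences of stable rank one: $\K_0(A)=\Gr(\V(A))$, and $\K_1(B)=U(\tilde B)/U_0(\tilde B)$ for every hereditary subalgebra $B$ of $A\otimes\KK$, each such $B$ again having stable rank one. Surjectivity onto the $\K_0$-summand is the identification $\Gr(\V(A))=\K_0(A)$ read off the elements $[(p,1)]$, while surjectivity onto the $\K_1$-summand comes from realizing an arbitrary class of $\K_1(A)$ by a unitary in a full corner such as $1_n\in M_n(A)$. For injectivity, given $([p]_0,[w]_1)=([q]_0,[v]_1)$ I use $\Gr(\V(A))=\K_0(A)$ to arrange, after adding a common compact element $(r,1)$, that $p$ and $q$ are Murray--von Neumann equivalent; transporting $v$ through this equivalence reduces matters to the $\K_1$-summand, where vanishing of $[wv^*]_1$ forces $w$ and $v$ to be homotopic once $p$ is enlarged to a full projection.

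Naturality of $\eta_*$ is then routine: for $\phi\colon A\to B$ in $\CatCasr$ the map $\Cu_1(\phi)$ sends a compact $(p,w)$ to $(\phi(p),\phi(w))$ up to the usual corner identifications, so the square comparing $H_*\circ\Cu_{1,u}(\phi)$ with $\K_*(\phi)$ commutes on generators and hence everywhere; the same description shows $\eta_{*,A}$ respects the order and the unit, so it is an isomorphism in $\AbGp_u$.

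The hard part will be the injectivity on the $\K_1$-summand. The subtlety is that a unitary $w$ lives only in the corner $p(A\otimes\KK)p$, so its class records $\K_1$ of a possibly proper ideal rather than of $A$, and vanishing of $[w]_1$ in $\K_1(A)$ does not by itself make $w$ null-homotopic in that corner. The resolution is exactly the passage to the group completion: enlarging $p$ by an orthogonal projection to reach a full corner and applying $\K_1=U/U_0$ for that full corner (valid by stable rank one) shows $w$ becomes null-homotopic after stabilization, so the two classes already agree in $\Gr((\Cu_1(A))_c)$. Executing this full-versus-nonfull passage cleanly, and checking its compatibility with the simultaneous $\K_0$-adjustment that makes $p$ and $q$ Murray--von Neumann equivalent, is the technical heart of the argument.
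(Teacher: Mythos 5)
Your proposal is correct and follows essentially the same route as the paper's proof: identify the compact elements of $\Cu_1(A)$ with classes $[(p,w)]$ of projections $p\in A\otimes\KK$ and unitaries $w$ in the corner $p(A\otimes\KK)p$, send these to $([p]_{\K_0(A)},[w]_{\K_1(A)})$ using the stable rank one consequences (cancellation for the $\K_0$-part, $\K_1$-bijectivity of $\mathcal{U}/\mathcal{U}_0$ for the unitary part), and pass to the Grothendieck group, checking naturality on generators. The only real divergence is that you explicitly confront the possible non-injectivity of $\K_1(\her(p))\longrightarrow\K_1(A)$ for a non-full projection $p$ and repair it only after group completion by adding the full compact class $[(1_A,1)]$, whereas the paper asserts the monoid isomorphism $\Cu_1(A)_c\simeq{\K_*(A)}_+$ outright; your version is the more careful one on this point, and since the image of $\Cu_1(A)_c$ in $\Gr(\Cu_1(A)_c)$ is still exactly ${\K_*(A)}_+$, the ordered-group conclusion is the same.
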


This paper is organized as follows: 
In a first part, we construct our invariant, for $\CatCa$-algebras of stable rank one. We show that it is an ordered monoid that satisfies the order-theoretic axioms (O1)-(O4) introduced in \cite{CEI08}. We also find a suitable category, called the category $\Cu^\sim$, and prove that $\Cu_1$ is a well-defined continuous functor.

Then, we give an alternative picture of our invariant, making use of the lattice of ideals of the $\CatCa$-algebra, in order to compute the $\Cu_1$-semigroup of some classes of $\CatCa$-algebras, such as the simple case, $\AF$, and some $\AI$ and $\A\!\T$ algebras. 

Finally, we explicitly define the notion of recovering an invariant from another and how one can recover classifying results. We then see that we can recover $\Cu$, $\K_1$ and also $\K_*$ from $\Cu_1$, to conclude that $\Cu_1$ is a complete invariant for the class of unital $\AH_d$ algebras with real rank zero.

We mention that this article is part of a twofold work. The author has been investigating further on the unitary Cuntz semigroup in \cite{C20b}, studying its ideal structure and exactness properties.

\textbf{Acknowledgments.} The author is indebted to Ramon Antoine and Francesc Perera for suggesting the construction of such an invariant, as this work was part of my PhD. I am grateful for their patience and many fruitful discussions on the Cuntz semigroup and details about the continuity of the $\Cu_1$-semigroup. The author also wishes to thank the referee for his/her thorough revision and numerous comments that have helped to greatly improve the manuscript.

\section{Preliminaries}

\subsection{The Cuntz semigroup}
We recall some definitions and properties on the Cuntz semigroup of a $\CatCa$-algebra. More details can be found in \cite{APT14}, \cite{APT09}, \cite{CEI08}, \cite{T19}.

\begin{prg}\textbf{(The Cuntz semigroup of a $\CatCa$-algebra.)}
\label{dfn:Cusg}
Let $A$ be a $\CatCa$-algebra. We denote by $A_+$ the set of positive elements. Let $a$ and $b$ be in $A_+$. We say that $a$ is Cuntz subequivalent to $b$, and we write $a\lesssim_{\Cu} b$, if there exists a sequence $(x_n)_{n\in\N}$ in $A$ such that $a=\lim\limits_{n\in\N}x_nbx_n^*$. After antisymmetrizing this relation, we get an equivalence relation over $A_+$, called Cuntz equivalence, denoted by $\sim_{\Cu}$.

Let us write $\Cu(A):= (A\otimes\mathcal{K})_+/\!\!\sim_{\Cu}$, that is, the set of Cuntz equivalence classes of positive elements of $A\otimes\mathcal{K}$. Given $a\in (A\otimes\mathcal{K})_+ $ we write $[a]$ for the Cuntz equivalence class of $a$. This set is equipped with an addition as follows: let $v_1$ and $v_2$ be two isometries in the multiplier algebra of $A\otimes\mathcal{K}$, such that $v_1v_1^*+v_2v_2^*=1_{M(A\otimes\mathcal{K})}$. Consider the $^*$-isomorphism $\psi: M_2(A\otimes\mathcal{K})\longrightarrow A\otimes\mathcal{K}$ given by $\psi(\begin{smallmatrix} a & 0\\ 0 & b \end{smallmatrix})=v_1av_1^*+v_2bv_2^*$, and we write $a \oplus b:=\psi(\begin{smallmatrix} a & 0\\ 0 & b \end{smallmatrix})$. 
For any $[a],[b]$ in $\Cu(A)$, we define $[a]+[b]:=[a\oplus b]$ and $[a]\leq [b]$ whenever $a\lesssim_{\Cu} b$. In this way $\Cu(A)$ is a semigroup called \emph{the Cuntz semigroup of $A$}. 

For any $^*$-homomorphism $\phi:A\longrightarrow B$, one can define $\Cu(\phi):\Cu(A)\longrightarrow\Cu(B)$, a semigroup map, by $[a]\longmapsto [(\phi\otimes id_\mathcal{K})(a)]$. Hence, we get a functor from the category of $\CatCa$-algebras into a certain subcategory of the category $\CatPoM$ of positively ordered monoids, called the category $\Cu$, that we describe next. 
\end{prg}

\begin{dfn}
\label{dfn:auxiliary}
Let $(S,\leq)$ be an ordered semigroup. An \emph{auxiliary relation} on $S$ is a binary relation $\prec$ such that:

(i) For any $a,b\in S$ such that $a\prec b$ we have $a\leq b$.

(ii) For any $a,b,c,d\in S$ such that $a\leq b\prec c \leq d$ we have $a\prec d$.
\end{dfn}

\begin{prg}\textbf{(The category $\Cu$.)}
\label{dfn:llCU}
Let $(S,\leq)$ be a positively ordered semigroup and let $x,y$ in $S$. We say that $x$ is \emph{way-below} $y$, and we write $x\ll y$ if, for all increasing sequences $(z_n)_{n\in\N}$ in $S$ that have a supremum, if $\sup\limits_{n\in\N} z_n\geq y$, then there exists $k$ such that $z_k\geq x$. This is an auxiliary relation on $S$, called the \emph{compact-containment relation} and sometimes referred to as the \emph{way-below} relation. In particular $x\ll y$ implies $x\leq y$ and we say that $x$ is a \emph{compact element} whenever $x\ll x$. 

We say that $S$ is an abstract $\Cu$-semigroup if it satisfies the following order-theoretic axioms: 

$\,\,$(O1): Every increasing sequence of elements in $S$ has a supremum. 

$\,\,$(O2): For any $x\in S$, there exists a $\ll$-increasing sequence $(x_n)_{n\in\N}$ in $S$ such that $\sup\limits_{n\in\N} x_n= x$.

$\,\,$(O3): Addition and the compact containment relation are compatible.

$\,\,$(O4): Addition and suprema of increasing sequences are compatible.

A \emph{$\Cu$-morphism} between two $\Cu$-semigroups is a positively ordered monoid morphism that preserves the compact containment relation and suprema of increasing sequences. 

The category $\Cu$ of abstract Cuntz semigroups is the subcategory of $\CatPoM$ whose objects are $\Cu$-semigroups and morphisms are $\Cu$-morphisms. 
\end{prg}

\begin{prg}\textbf{(Properties of the Cuntz semigroup.)}
\label{prg:latticecu}
Let $S$ be a $\Cu$-semigroup. We say that $S$ is \emph{countably-based} if there exists a countable subset $B\subseteq S$ such that for any $a,a'\in S$ such that $a'\ll a$, then there exists $b\in B$ such that $a'\leq b \ll a$. An element $u\in S$ is called an \emph{order-unit} of $S$ if for any $x\in S$, there exists $n\in\overline{\N}:=\N\sqcup\{\infty\}$ such that $x\leq nu$.
A countably-based $\Cu$-semigroup has a largest element or, equivalently, it is singly-generated as an ideal -for instance, by its largest element-. Let us also mention that if $A$ is a separable $\CatCa$-algebra, then $\Cu(A)$ is countably-based. In fact, its largest element, that we write $\infty_A$, can be explicitly constructed as $\infty_{A}=\sup\limits_{n\in\N}n[s_A]$, where $s_A$ is any strictly positive element (or full positive) in $A$. A fortiori, $[s_A]$ is an order-unit of $\Cu(A)$.

A notion of ideals in the category $\Cu$ has been considered in several places; we refer the reader to \cite[\S 5.1.6]{APT14} for more details. We recall that for any $\Cu$-semigroup $S$ and any $x\in S$, the ideal generated by $x$ is $I_x:=\{y\in S \mid y\leq \infty x\}$. For any $\CatCa$-algebra A, the assignment $I\longmapsto \Cu(I)$ defines a lattice isomorphism between the lattice $\Lat(A)$ of closed two-sided ideals of $A$ and the lattice $\Lat(\Cu(A))$ of ideals of $\Cu(A)$. In fact, $a$ is a full positive element in $I$ if and only if $[a]$ is a full element in $\Cu(I)$. In this case, we have $\Cu(I_a)=I_{[a]}$.
\end{prg}

\subsection{The stable rank one context}
\label{prop:viewequi}
As mentioned before, we work with $\CatCa$-algebras of stable rank one. In this context, Cuntz subequivalence of positive elements admits a nicer description easier to work with. Let us shortly explicit this alternative picture and we refer the reader to \cite[Proposition 4.3 - \S 6]{Ortega-Rordam-Thiel}, \cite[Proposition 1]{CES11} and \cite{P-Z} for more details. 

Let $A$ be a $\CatCa$-algebra. We recall that an \emph{open projection} is a projection $p\in A^{**}$ such that $p$ belongs to the strong closure of the hereditary subalgebra $A_p:=pA^{**}p\cap A$ of $A$. These open projections are in one-to-one correspondence with the hereditary subalgebras of $A$. For any positive element $a$ of $A$, we shall write $\her(a):=\overline{aAa}$, the hereditary subalgebra of $A$ generated by $a$ and call the \emph{support projection of $a$}, the (unique) open projection $p_a\in A^{**}$ such that $\her(a)= A_{p_a}$. We recall that $p_a:=SOT-\lim a^{1/n}$.

We also recall that two open projections $p,q\in A^{**}$ are \emph{Peligrad-Zsid\'o equivalent}, and we write $p \sim_{PZ} q$ if there exists a partial isometry $v\in A^{**}$ such that $p=v^*v, q=vv^*, vA_p\subseteq A, A_qv^*\subseteq A$. We say that $p\lesssim_{PZ} q$ if there exists an open projection $p'\in A^{**}$ such that $p\sim_{PZ} p'\leq q$; see \cite[Definition 1.1]{P-Z}. 

Suppose now that $A$ has stable rank one. Then $a\lesssim_{\Cu}b$ if and only if there exists $x\in A$ such that $xx^*=a$ and $x^*x\in \her(b)$. This is in turn equivalent to saying that $p_a \lesssim_{PZ} p_b$.
In this case, for any partial isometry $\alpha \in A^{**}$ that realizes the Peligrad-Zsid\'o equivalence between $p_a$ and $p_b$, we have an explicit injection as follows: 
\vspace{-0,4cm}\[
    \begin{array}{ll}
    		\hspace{-1,5cm} \theta_{ab,\alpha}: \her(a) \hooklongrightarrow \her(b) \\
    		d \longmapsto \alpha^*d\alpha
    \end{array}
\]

The next proposition is similar to \cite[Proposition 3.3]{Ortega-Rordam-Thiel} and \cite[Theorem 1.4]{P-Z}. For the sake of completeness we will give a proof in this slightly different picture.

\begin{prop}
\label{prop:MvNpz}
Let $A$ be a $\CatCa$-algebra. Let $a$ be in $A_+$ and let $p\in A^{**}$ be its support projection. Let $\alpha$ be a partial isometry in $A^{**}$ such that $p=\alpha\alpha^*$ and $q:=\alpha^*\alpha$ is an open projection of $A^{**}$. Set $x:=a^{1/2}\alpha$. Then $p\sim_{PZ}q$ if and only if $x$ belongs to A. In this case, $q=p_{x^*x}$.
\end{prop}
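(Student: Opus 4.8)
The plan is to first extract the algebraic identities forced by $\alpha\alpha^*=p=p_a$, then handle the two implications separately, picking up the equality $q=p_{x^*x}$ along the way. Since $p$ is the support projection of $a$ we have $pa^{1/2}=a^{1/2}=a^{1/2}p$, so
\[
xx^*=a^{1/2}\alpha\alpha^*a^{1/2}=a^{1/2}pa^{1/2}=a,\qquad x^*x=\alpha^*a\alpha .
\]
The essential tool is the compression identity $\alpha^*f(a)\alpha=f(\alpha^*a\alpha)$, valid for every continuous $f$ with $f(0)=0$: one checks $(\alpha^*a\alpha)^n=\alpha^*a^n\alpha$ using $\alpha\alpha^*=p$ and $pa=a$, so the identity holds for polynomials vanishing at $0$ and then for all such $f$ by Stone--Weierstrass together with the norm-continuity of $d\mapsto\alpha^*d\alpha$. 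Taking $f(t)=t^{1/2}$ gives $|x|=(x^*x)^{1/2}=\alpha^*a^{1/2}\alpha$, and since $\alpha\cdot\alpha^*a^{1/2}\alpha=pa^{1/2}\alpha=x$ we see that $x=\alpha|x|$ is the polar decomposition of $x$. Taking instead $f(t)=t^{1/n}$ yields $(x^*x)^{1/n}=\alpha^*a^{1/n}\alpha\longrightarrow\alpha^*p\alpha=\alpha^*\alpha=q$ strongly, so $q$ is the support projection of $x^*x$; this proves the last assertion $q=p_{x^*x}$, which is the support projection of a genuine element of $A$ precisely when $x\in A$.

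For the implication $x\in A\Rightarrow p\sim_{PZ}q$, I would exhibit $\alpha^*$ (equivalently $\alpha$, by the symmetry of $\sim_{PZ}$) as the partial isometry implementing the equivalence; it has $\alpha\alpha^*=p$ and $\alpha^*\alpha=q$, so only the two membership conditions remain. Writing $\her(a)=\overline{a^{1/2}Aa^{1/2}}$ and using $\alpha^*a^{1/2}=x^*\in A$, one gets $\alpha^*(a^{1/2}ca^{1/2})=x^*ca^{1/2}\in A$, hence $\alpha^*\her(a)\subseteq A$; similarly, writing $\her(x^*x)=\overline{|x|A|x|}$ and using $\alpha|x|=x\in A$ together with $|x|\in A$, one gets $\alpha\,(|x|c|x|)=xc|x|\in A$, hence $\alpha\her(x^*x)\subseteq A$. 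These are exactly the conditions required of a realizing partial isometry, so $p\sim_{PZ}q$.

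For the converse I would start from an arbitrary realizing partial isometry $v$ (so $v^*v=p$, $vv^*=q$, with $\her(a)$ and $\her(x^*x)=A_q$ carried into $A$) and aim at $x^*=\alpha^*a^{1/2}\in A$. The difficulty is that $v$ need not equal $\alpha^*$: the two differ by the unitary $u:=v\alpha$ of the corner $qA^{**}q$ (indeed $u^*u=uu^*=q$), and since $a^{1/2}\in\her(a)$ one has $va^{1/2}\in A$ while $\alpha^*a^{1/2}=u^*(va^{1/2})$. I expect the \emph{main obstacle} to be precisely this transfer from the abstract isometry $v$ to the given $\alpha$, i.e.\ controlling $u$; here the openness of $q$ is what makes it possible. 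Concretely I would use it, via the cited comparison theory, to show first that $x^*x=\alpha^*a\alpha\in A$ (equivalently that $u$ normalizes $A_q$, whence also $\her(x^*x)\alpha^*\subseteq A$), and then fix an approximate unit $(e_\lambda)$ of $A_q=\her(x^*x)$. Using $q\alpha^*=\alpha^*$ and $xq=x$ one has $\alpha^*a^{1/2}-e_\lambda\alpha^*a^{1/2}=(q-e_\lambda)x^*$ with
\[
\|(q-e_\lambda)x^*\|^2=\big\|(q-e_\lambda)(x^*x)(q-e_\lambda)\big\|\longrightarrow 0,
\]
the limit being the standard approximate-unit estimate now that $x^*x\in A_q$. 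As each $e_\lambda\alpha^*a^{1/2}$ lies in $A$, the norm limit $\alpha^*a^{1/2}=x^*$ does too, so $x\in A$. Everything except this last transfer is routine bookkeeping with the identities of the first paragraph.
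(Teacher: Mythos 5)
Your first two paragraphs are correct and, modulo presentation, coincide with the paper's proof of the substantive implication: the paper likewise verifies the two containments $\alpha^*\her(a)\subseteq A$ and $\alpha\her(x^*x)\subseteq A$ (it obtains the second from the first by taking adjoints, via $\alpha A_q=A_p\alpha=(\alpha^*A_p)^*$, where your direct computation using $\alpha|x|=x$ is an adequate substitute), and it identifies $q=p_{x^*x}$ by transporting the support projection through $\alpha$, which your SOT-limit of $(x^*x)^{1/n}=\alpha^*a^{1/n}\alpha$ accomplishes just as well.

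The third paragraph, however, contains a genuine and unfixable gap. You read the forward implication as: if \emph{some} partial isometry $v$ realizes $p\sim_{PZ}q$, then $x=a^{1/2}\alpha\in A$ for the \emph{given} $\alpha$. That statement is false. Take $A=\mathcal{C}(\T)$ and $a=1$, so $p=q=1$ and $p\sim_{PZ}q$ via $v=1$; any unitary $\alpha\in A^{**}$ satisfies the hypotheses ($\alpha\alpha^*=p$ and $q=\alpha^*\alpha$ open), yet $x=\alpha$ need not lie in $A$ (take a discontinuous unimodular Borel function). Note that in this example $x^*x=a\in A$ and the corner unitary $u=v\alpha$ does normalize $A_q=A$, so neither of your proposed intermediate goals implies $\her(x^*x)\,\alpha^*\subseteq A$, and the appeal to ``the cited comparison theory'' cannot be made precise; the openness of $q$ is automatic here and does not help. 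The intended reading --- the one the paper uses, and the only one under which the proposition is invoked later (\autoref{lma:invarK1inj} speaks of $\alpha$ and $\beta$ that ``both realize'' the subequivalence) --- is that the forward implication assumes $\alpha^*$ itself is a realizing partial isometry. Under that reading it is immediate, as the paper indicates: $a^{1/2}\in A_p$ and $\alpha^*A_p\subseteq A$ give $x^*=\alpha^*a^{1/2}\in A$. The fix is therefore not to strengthen your transfer argument but to prove the weaker, intended statement.
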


\begin{proof}
The forward implication is coming from the definition of the Peligrad-Zsid\'o equivalence itself. 

Conversely, let us suppose that $x:=a^{1/2}\alpha$ belongs to $A$. Let $d$ be in $aAa$. Then there exists $\delta_d$ in $A$ such that $d=a\delta_d a$. Now observe that $\alpha^*d=\alpha^*a^{1/2}a^{1/2}\delta_d a$ belongs to $A$. We obtain that $\alpha^*aAa\subseteq A$, and hence $\alpha^*\overline{aAa}\subseteq A$, that is, $\alpha^*A_p\subseteq A$. 
Now since $p$ is a support projection and $q=\alpha^*p\alpha$, we deduce that $q$ is a support projection and moreover $\alpha^*A_p\alpha=A_q$.
Finally, observe that $\alpha A_q=\alpha A_q\alpha^*\alpha=A_p\alpha$ and that $(\alpha^*A_p)^*=A_p\alpha$, so $\alpha A_q\subseteq A$. 
We conclude that $p\sim_{PZ}q$ and by construction $q=p_{x^*x}$.
\end{proof}

\begin{lma}
\label{lma:invarK1inj}
Let $A$ be a $\CatCa$-algebra with stable rank one and let $a$ and $b$ be contractions in $A_+$ such that $a\lesssim_{\Cu} b$. Let $\alpha$ and $\beta$ be in $A^{**}$ such that they both realize the Peligrad-Zsid\'o subequivalence of $p_a\lesssim_{PZ}p_b$. For any $u \in \mathcal{U}(\her(a)^\sim)$, we have
\vspace{-0,1cm}\[
[\theta^\sim_{ab,\alpha}(u)]_{\K_1(\her(b)^\sim)}=[\theta^\sim_{ab,\beta}(u)]_{\K_1(\her(b)^\sim)}
\vspace{-0,1cm}\]
where $\theta^\sim_{ab,\alpha}$ (resp $\theta^\sim_{ab,\beta}$) is the unitized morphism of $\theta_{ab,\alpha}$ in \autoref{prop:viewequi}.
\end{lma}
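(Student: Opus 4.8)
The plan is to reduce the statement to the observation that $\theta_{ab,\alpha}$ and $\theta_{ab,\beta}$ differ by conjugation by a single partial isometry, and then to show that this conjugation is invisible to $\K_1$.

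First I would set $w:=\alpha^*\beta$ and record its elementary properties. Using $\alpha\alpha^*=p_a=\beta\beta^*$ one checks that $w$ is a partial isometry with $ww^*=\alpha^*\alpha=:q_\alpha$ and $w^*w=\beta^*\beta=:q_\beta$; by \autoref{prop:MvNpz} these are exactly the support projections of the images, namely $q_\alpha=p_{x_\alpha^*x_\alpha}$ and $q_\beta=p_{x_\beta^*x_\beta}$ with $x_\alpha=a^{1/2}\alpha$, $x_\beta=a^{1/2}\beta$, so that $\im\theta_{ab,\alpha}=A_{q_\alpha}$ and $\im\theta_{ab,\beta}=A_{q_\beta}$. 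Since $d=p_adp_a$ for every $d\in\her(a)$, a one-line computation gives $\theta_{ab,\beta}(d)=\beta^*d\beta=w^*(\alpha^*d\alpha)w=w^*\theta_{ab,\alpha}(d)w$; that is, $\theta_{ab,\beta}=\mathrm{Ad}(w^*)\circ\theta_{ab,\alpha}$, where $\mathrm{Ad}(w^*)\colon A_{q_\alpha}\to A_{q_\beta}$, $c\mapsto w^*cw$, is a $^*$-isomorphism.

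Next I would peel off the scalar part. Writing a unitary $u\in\mathcal{U}(\her(a)^\sim)$ as $u=\lambda\cdot 1+d$ with $\lambda\in\T$ and $d\in\her(a)$, the unitized maps give $\theta^\sim_{ab,\alpha}(u)=\lambda\cdot 1+\theta_{ab,\alpha}(d)$, and similarly for $\beta$. Multiplying by the scalar $\overline\lambda\cdot 1$, which is connected to $1$ in $\mathcal{U}(\her(b)^\sim)$ since $\T$ is connected, one sees that $[\theta^\sim_{ab,\alpha}(u)]=[1+\overline\lambda\,\theta_{ab,\alpha}(d)]$ in $\K_1(\her(b)^\sim)=\K_1(\her(b))$, the latter unitary lying in $A_{q_\alpha}^\sim$. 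Hence $[\theta^\sim_{ab,\alpha}(u)]=\iota_\alpha\big([1+\overline\lambda\,\theta_{ab,\alpha}(d)]\big)$, where $\iota_\alpha\colon\K_1(A_{q_\alpha})\to\K_1(\her(b))$ is induced by the inclusion; the analogous identity holds for $\beta$, and under $\mathrm{Ad}(w^*)$ the two representing unitaries correspond. Thus the lemma becomes the equality of maps $\iota_\alpha=\iota_\beta\circ\mathrm{Ad}(w^*)_*$ on $\K_1(A_{q_\alpha})$.

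The heart of the matter, and the step I expect to be the main obstacle, is that $w$ only lives in $A^{**}$, so a priori $\mathrm{Ad}(w^*)$ is not implemented by a unitary of any multiplier algebra and cannot be declared $\K_1$-preserving for free. To circumvent this I would pass to the ideal $J\subseteq\her(b)$ generated by $x_\alpha^*x_\alpha$, which coincides with the ideal generated by $x_\beta^*x_\beta$ because both elements are Cuntz equivalent to $a$; thus $A_{q_\alpha}$ and $A_{q_\beta}$ are (full) hereditary subalgebras of $J$. Since $A$, hence $J$, has stable rank one and $x_\alpha^*x_\alpha\sim_{\Cu}x_\beta^*x_\beta$, the cited stable-rank-one theory (\cite{Ortega-Rordam-Thiel}, \cite{P-Z}) provides a genuine unitary $U\in M(J)^\sim$ with $UA_{q_\alpha}U^*=A_{q_\beta}$. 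For such a $U$ the square relating the inclusions $A_{q_\alpha},A_{q_\beta}\hookrightarrow J$ and $\mathrm{Ad}(U)$ commutes, and as $\mathrm{Ad}(U)$ is inner via a multiplier unitary it acts trivially on $\K_1(J)$; this yields $\iota_\beta\circ\mathrm{Ad}(U)_*=\iota_\alpha$ after composing with $\K_1(J)\to\K_1(\her(b))$. Finally I would note that $\mathrm{Ad}(w^*)$ and $\mathrm{Ad}(U)$ differ on $A_{q_\alpha}$ only by the inner automorphism of $A_{q_\beta}$ implemented by the unitary $Uw\in M(A_{q_\beta})^\sim$, which is $\K_1$-trivial, so that $\mathrm{Ad}(w^*)_*=\mathrm{Ad}(U)_*$ on $\K_1$. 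Chaining these identities gives $\iota_\alpha=\iota_\beta\circ\mathrm{Ad}(w^*)_*$, which by the reduction above is precisely the asserted equality of classes.
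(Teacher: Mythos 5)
Your reduction is sound up to and including the identity $\theta_{ab,\beta}=\mathrm{Ad}(w^*)\circ\theta_{ab,\alpha}$ with $w=\alpha^*\beta$, and the peeling off of the scalar part; this is essentially the same starting point as the paper's argument, which compares the two maps via $x=a^{1/2}\alpha$ and $y=a^{1/2}\beta$. The genuine gap is the claim that stable rank one produces an \emph{exact} unitary $U\in M(J)^\sim$ with $UA_{q_\alpha}U^*=A_{q_\beta}$. This is false in general: take $A=\mathcal{K}$, let $a=b$ be a strictly positive element (so $p_a=p_b=1$), and take $\alpha=1$ and $\beta=s^*$ with $s$ the unilateral shift. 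Both realize $p_a\lesssim_{PZ}p_b$ in the sense of \autoref{prop:MvNpz} (indeed $a^{1/2}s^*\in\mathcal{K}$ and $q_\beta=ss^*$ is open), one has $J=\mathcal{K}$, $A_{q_\alpha}=\mathcal{K}$ and $A_{q_\beta}=(1-e_{00})\mathcal{K}(1-e_{00})$, and no unitary of $M(\mathcal{K})=B(H)$ conjugates $\mathcal{K}$ onto a proper corner: the obstruction is exactly that $1-q_\alpha$ and $1-q_\beta$ are not equivalent, even though $q_\alpha\sim q_\beta$ via $w$. So the unitary $U$ you invoke need not exist, and the subsequent claim that $Uw$ is a multiplier unitary of $A_{q_\beta}$ collapses with it.

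What stable rank one actually furnishes is an \emph{approximate} intertwiner, and this is how the paper closes the argument. Since $xx^*=yy^*=a$, \cite[Lemma 2.4]{CES11} gives, for every $\epsilon>0$, a unitary $u_\epsilon\in\mathcal{U}(\her(b)^\sim)$ with $\Vert y-xu_\epsilon\Vert<\epsilon$. For a \emph{fixed} unitary $u\in\mathcal{U}(\her(a)^\sim)$ one then estimates $\Vert u_\epsilon^*\theta^\sim_{ab,\alpha}(u)u_\epsilon-\theta^\sim_{ab,\beta}(u)\Vert<2$, so the two unitaries are homotopic in $\her(b)^\sim$, and since conjugation by a unitary of $\her(b)^\sim$ acts trivially on $\K_1(\her(b)^\sim)$, the classes agree. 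Your strategy can therefore be repaired, but only by replacing the single exact conjugating unitary with this $\epsilon$- and $u$-dependent one; as written, the step producing $U$ does not go through.
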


\begin{proof}
Since $a$ and $b$ are fixed elements, we shall write $\theta_\alpha$ instead of $\theta_{ab,\alpha}$ (respectively $\theta_\beta$ for $\theta_{ab,\beta}$). 
Consider the injections given by $\alpha$ and $\beta$ as in \autoref{prop:viewequi}.  
Define $x:=a^{1/2}\alpha$ and $y:=a^{1/2}\beta$. We have $x,y\in A$. We first consider elements of $aAa$ and the result will follow by continuity. Rewrite $\theta_{\alpha}$ and $\theta_{\beta}$:
\vspace{-0,1cm}\[
	\begin{array}{ll} 
		\theta_{\alpha}: aAa \hooklongrightarrow \overline{bAb}   \,\,\,\,\,\,\,\,\,\,\,\,\,\,\,\,\,\,\,\, \,\,\,\,\,\,\,\,\,\,\,\,\,\,\,\,\,\,\, \theta_{\beta}: aAa \hooklongrightarrow \overline{bAb} \\
		\hspace{0,76cm} a\delta a \longmapsto x^*a^{1/2}\delta a^{1/2}x \,\,\,\,\,\,\,\,\,\,\,	\hspace{1,2cm} a\delta a \longmapsto y^*a^{1/2}\delta a^{1/2}y
	\end{array}
\]

Let $u$ be a unitary element of $\her(a)^\sim$. There exists a pair $(u_0,\lambda)$ with $u_0\in\her(a)$ and $\lambda\in\T$ such that $u=u_0+\lambda$.

Let $0<\epsilon<2$. Since $\her(a)=\overline{aAa}$, we can find $\delta\in A$ such that $\Vert u_0-a\delta a\Vert \leq\epsilon/3$. We write $M:=\Vert \delta \Vert$ and we set $\epsilon':=\epsilon/(6M)$. On the one hand, observe that $\Vert a^{1/2} \Vert\leq 1$ and hence we easily get that $\Vert a^{1/2}\delta a^{1/2}\Vert\leq M$. 


On the other hand, since $a=xx^*=yy^*$, by \cite[Lemma 2.4]{CES11} we know there exists a unitary element $u_\epsilon$ of $\her(b)^\sim$ such that $\Vert y-xu_\epsilon\Vert\leq\epsilon'$ (equivalently $\Vert u_\epsilon^*x^*-y^*\Vert \leq\epsilon'$). 
Now, we compute:
\begin{align*}
\Vert u_\epsilon^*\theta_{\alpha}^\sim(a\delta a+\lambda)u_\epsilon-\theta_{\beta}^\sim(a\delta a+\lambda)\Vert & = \Vert u_\epsilon^*x^*a^{1/2}\delta a^{1/2}xu_\epsilon - y^*a^{1/2}\delta a^{1/2}y \Vert \\
&\leq \Vert u_\epsilon^*x^*a^{1/2}\delta a^{1/2}xu_\epsilon - y^*a^{1/2}\delta a^{1/2}xu_\epsilon \Vert\\
&\,\,\,\,\,\,\,\, +\Vert y^*a^{1/2}\delta a^{1/2}xu_\epsilon - y^*a^{1/2}\delta a^{1/2}y \Vert \\
& \leq \Vert u_\epsilon^*x^*-y^*\Vert\,\Vert a^{1/2}\delta a^{1/2}\Vert \,\Vert xu_\epsilon\Vert+ \Vert y-xu_\epsilon\Vert \,\Vert a^{1/2}\delta a^{1/2}\Vert \,\Vert y^*\Vert\\
& \leq \epsilon' M+\epsilon' M\\
&\leq \epsilon/3.
\end{align*}
Combining the fact that $u$ and $a\delta a+\lambda$ are close up to $\epsilon/3$ with the fact that $\theta_{\alpha}^\sim$ and $\theta_{\beta}^\sim$ are contractive maps, we conclude that $\Vert u_\epsilon^*\theta_{\alpha}^\sim(u)u_\epsilon-\theta_{\beta}^\sim(u)\Vert \leq\epsilon<2$. On the other hand, it is well-known that unitary elements that are close enough (i.e. $\Vert u-v\Vert < 2$) are homotopic. We conclude that $u_\epsilon^*\theta_{\alpha}^\sim(u)u_\epsilon\sim_h\theta_{\beta}^\sim(u)$ and the result follows.
\end{proof}

\section{The \texorpdfstring{$\Cu_1$}{Cu1} semigroup}
\label{sec:Def Cu1}
In this section we define the invariant and establish its first properties. The unitary Cuntz semigroup consists of classes of pairs of element $(a,u)$, where $a$ is a positive element of $A\otimes\mathcal{K}$ and $u$ is a unitary of $\her(a)^\sim$, under a suitable equivalence relation, written $\sim_1$, that is built using the Cuntz subequivalence to compare positive elements and using \autoref{lma:invarK1inj} to compare unitary elements. Our main result here focuses on the continuity of this invariant. 
\subsection{The \texorpdfstring{$\lesssim_1$}{Unitary Cuntz} binary relation}
\label{rmk:canoinj2}

Let $A$ be a $\CatCa$-algebra of \emph{stable rank one}. Let $a,b\in A_+$ and let $u,v$ be unitary elements of $\her(a)^\sim$ and $\her(b)^\sim$ respectively. 

We say that $(a,u)$ is \emph{unitarily Cuntz subequivalent} to $(b,v)$, and we write $(a,u)\lesssim_1(b,v)$ if, \vspace{-0,1cm}\[\left\{\begin{array}{ll} \,a\lesssim_{\Cu} b \\ \,[\theta_{ab,\alpha}^\sim(u)]=[v] \text{ in } \K_1(\her(b)^\sim)\end{array}\right.\vspace{-0cm}\]
where $\theta_{ab,\alpha}$ is the injection given by a partial isometry $\alpha$ as constructed in \autoref{prop:viewequi}.

\begin{lma}
The relation $\lesssim_1$ is reflexive and transitive. 
\end{lma}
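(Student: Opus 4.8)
The plan is to establish reflexivity and transitivity separately, in each case discharging the Cuntz component immediately and reducing the $\K_1$-component to functoriality of $\K_1$ together with the independence-of-$\alpha$ statement of \autoref{lma:invarK1inj}.

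For reflexivity I would exhibit the canonical realizing partial isometry for $p_a\lesssim_{PZ}p_a$, namely the support projection $\alpha=p_a$. Then $\alpha\alpha^*=\alpha^*\alpha=p_a$ is open and $x=a^{1/2}p_a=a^{1/2}\in A$, so \autoref{prop:MvNpz} gives $p_a\sim_{PZ}p_a$, and since $p_a$ acts as a unit on $\her(a)=A_{p_a}$ one has $\theta_{aa,p_a}(d)=p_a d p_a=d$. Thus $\theta_{aa,p_a}$ is the identity of $\her(a)$, its unitization is the identity of $\her(a)^\sim$, and $[\theta_{aa,p_a}^\sim(u)]=[u]$; this is the required $\K_1$-equality (the choice of realizing partial isometry being immaterial by \autoref{lma:invarK1inj}).

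For transitivity, assume $(a,u)\lesssim_1(b,v)$ and $(b,v)\lesssim_1(c,w)$, witnessed by $\alpha$ realizing $p_a\lesssim_{PZ}p_b$ and $\beta$ realizing $p_b\lesssim_{PZ}p_c$. Transitivity of $\lesssim_{\Cu}$ gives $a\lesssim_{\Cu}c$ at once. The crux is to show that $\gamma:=\alpha\beta$ is itself an admissible realizing partial isometry for $p_a\lesssim_{PZ}p_c$ and that $\theta_{ac,\gamma}=\theta_{bc,\beta}\circ\theta_{ab,\alpha}$. Granting this, I would apply the group homomorphism $\K_1(\theta_{bc,\beta}^\sim)$ to the hypothesis $[\theta_{ab,\alpha}^\sim(u)]=[v]$ and use functoriality to obtain
\[
[\theta_{ac,\gamma}^\sim(u)]=\K_1(\theta_{bc,\beta}^\sim)[\theta_{ab,\alpha}^\sim(u)]=\K_1(\theta_{bc,\beta}^\sim)[v]=[\theta_{bc,\beta}^\sim(v)]=[w],
\]
the last equality being the second hypothesis. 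Combined with $a\lesssim_{\Cu}c$ this yields $(a,u)\lesssim_1(c,w)$, and \autoref{lma:invarK1inj} again frees the conclusion from the particular choices made.

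The main obstacle is verifying that $\gamma=\alpha\beta$ genuinely realizes the Peligrad--Zsid\'o subequivalence in the precise form required by \autoref{prop:MvNpz}. I would first record $\gamma\gamma^*=\alpha p_b\alpha^*=p_a$ (using $\alpha^*\alpha\leq p_b$, whence $\alpha p_b=\alpha$) and $\gamma^*\gamma=\beta^*(\alpha^*\alpha)\beta\leq\beta^*p_b\beta=\beta^*\beta\leq p_c$, checking directly that it is a projection. The delicate step is to prove $x:=a^{1/2}\gamma=(a^{1/2}\alpha)\beta$ lies in $A$: setting $x_1:=a^{1/2}\alpha\in A$, one has $x_1^*x_1\in\her(b)$ with support $\alpha^*\alpha\leq p_b$, so $x_1=x_1p_b$; approximating $x_1$ in norm by $x_1e_n$ for an approximate unit $(e_n)$ of $\her(b)$ and invoking $A_{p_b}\beta\subseteq A$ (the adjoint of the Peligrad--Zsid\'o condition $\beta^*A_{p_b}\subseteq A$) gives $x_1e_n\beta\in A$, hence $x=\lim_n x_1e_n\beta\in A$ by closedness of $A$. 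With $x\in A$, \autoref{prop:MvNpz} shows $\gamma^*\gamma=p_{x^*x}$ is open and $p_a\sim_{PZ}\gamma^*\gamma\leq p_c$, so $\gamma$ is admissible; the factorization $\theta_{ac,\gamma}(d)=\gamma^*d\gamma=\beta^*(\alpha^*d\alpha)\beta=\theta_{bc,\beta}(\theta_{ab,\alpha}(d))$ is then immediate and passes to unitizations, completing the reduction used above.
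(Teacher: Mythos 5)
Your proof is correct and takes essentially the same route as the paper: form $\gamma:=\alpha\beta$, check via \autoref{prop:MvNpz} that it realizes $p_a\lesssim_{PZ}p_c$, and conclude from the factorization $\theta_{ac,\gamma}=\theta_{bc,\beta}\circ\theta_{ab,\alpha}$, $\K_1$-functoriality, and the independence of the choice of partial isometry guaranteed by \autoref{lma:invarK1inj}. The only divergence is the verification that $a^{1/2}\alpha\beta\in A$: the paper factors $x=u^*(x^*x)^{1/3}$ using \cite[Proposition 2.12]{APT09} and applies $\beta^*A_{p_b}\subseteq A$ directly, whereas you approximate $x$ by $xe_n$ with an approximate unit of $\her(b)$ and use norm-closedness of $A$ --- both arguments are valid.
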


\begin{proof}
Reflexivity of $\lesssim_1$ follows from the fact that $\lesssim_{\Cu}$ is reflexive and that $id_{\her(a)}=\theta_{aa,p_a}$. 

Now let $a,b$ and $c$ be in $A_+$ and let $u_{a},u_{b}$ and $u_{c}$ be unitary elements of $\her(a)^\sim$, $\her(b)^\sim$ and $\her(c)^\sim$ respectively. Assume that $(a,u_a)\lesssim_1(b,u_b)$ and $(b,u_b)\lesssim_1(c,u_c)$.
By hypothesis, we know that $a \lesssim_{\Cu} b$ and $b\lesssim_{\Cu} c$. Since $A$ has stable rank one, there exist $x,y\in A$ such that $a=xx^*$, $b=yy^*$, $x^*x\in\her(b)$ and $y^*y\in\her(c)$.
Let us consider the polar decompositions of $x$ and $y$. That is, $x=a^{1/2}\alpha, y=b^{1/2}\beta$, for some partial isometries $\alpha, \beta$ of $A^{**}$. 
Using \autoref{prop:viewequi}, we get $p_a=\alpha\alpha^*\sim_{PZ}\alpha^*\alpha\leq p_b$ and also $p_b\sim_{PZ}\beta^*\beta\leq p_c$. We set $q_a:=\alpha^*\alpha, q_b:=\beta^*\beta$. One can check that $\gamma:=\alpha\beta$ is a partial isometry of $A^{**}$ and that $p_a=\gamma\gamma^*$.

Let us write $z:=a^{1/2}\gamma$. Observe that $zz^*=a$ and also $z=x\beta$. We hence compute that $z^*z=\beta^*x^*x\beta\in\her(c)$. We deduce that $zz^*=a$ and $z^*z\in\her(c)$. 
By \cite[Proposition 2.12]{APT09} we may write $x:=u^*(x^*x)^{1/3}$ for some element $u$ of $A$. Since $(x^*x)\in A_{p_b}$ and $\beta^*A_{p_b}\subseteq A$, we deduce that $\beta^*x^*$ is in $A$, and hence $z\in A$.

Using \autoref{prop:MvNpz}, we obtain that $q_c:=\gamma^*\gamma$ is the support projection of $z^*z$ and is Peligrad-Zsid\'o equivalent to $p_a$. 
Finally, \autoref{lma:invarK1inj} tells us that $\theta_{ac,\gamma}=\theta_{bc,\beta}\circ\theta_{ab,\alpha}$ is one of the morphisms described in \autoref{prop:viewequi}, from which the transitivity of $\lesssim_1$ follows.
\end{proof}

\subsection{Standard maps}
\label{rmk:canoinj}
We have seen that for any unitary element $u$ of $\her(a)^{\sim}$ and any partial isometry $\alpha \in A^{**}$ such that $p_a=\alpha\alpha^*$ and $\alpha^*\alpha\leq p_b$, the $\K_1$-class of $\theta^\sim_{ab,\alpha}(u)$ does not depend on the $\alpha$ chosen. In the sequel, whenever $a\lesssim_{\Cu}b$, we will refer to the maps $\theta^\sim_{ab,\alpha}$ as \emph{standard maps} and will rewrite them as $\theta_{ab}$. In particular, whenever $a\leq b$ observe that the canonical inclusion map $i$ is a standard map.
Also, notice that every standard morphism between $a$ and $b$ gives rise to the same group morphism at the $\K_1$-level, that we will denote by $\chi_{ab}$. That is, $\chi_{ab}:=\K_1(\theta_{ab}):\K_1(\her(a))\longrightarrow \K_1(\her(b))$.

\subsection{The \texorpdfstring{$\Cu_1$}{Cu1}-semigroup}
\label{dfn:Cu1}

We construct the \emph{unitary Cuntz semigroup} of a stable rank one $\CatCa$-algebra in a similar fashion to the original Cuntz semigroup, using the $\lesssim_1$ relation and the standard maps. 

Let $A$ be a $\CatCa$-algebra of \emph{stable rank one}. By antisymmetrizing the $\lesssim_1$ relation, we define an equivalence relation $\sim_1$ on the set of pairs $(a,u)$ where $a\in (A\otimes\mathcal{K})_+$ and $u\in \mathcal{U}(\her(a)^\sim)$. The equivalence relation $\sim_1$ is called the \emph{unitary Cuntz equivalence} and we denote by $[(a,u)]$ the equivalence class of $(a,u)$. We construct the unitary Cuntz semigroup of $A$ as follows:
\vspace{-0cm}\[\Cu_1(A):= \{(a,u) \mid a\in (A\otimes\mathcal{K})_+ , u\in \mathcal{U}(\her(a)^\sim)\}/\!\!\sim_1.\]

The set $\Cu_1(A)$ naturally inherits a partial order induced by the relation $\lesssim_1$. More concretely, for any two $[(a,u)],[(b,v)]\in\Cu_1(A)$, we say that $[(a,u)]\leq [(b,v)]$ if and only if $(a,u)\lesssim_1 (b,v)$.

The addition on $\Cu_1(A)$ is defined component-wise and mimics the construction of the addition in $\Cu(A)$: given any two elements $a,b \in (A\otimes\mathcal{K})_+$, we know that $a \oplus b:=\psi(\begin{smallmatrix} a & 0\\ 0 & b \end{smallmatrix})$ is a positive element of $A\otimes\mathcal{K}$, where $\psi:M_2(A\otimes\mathcal{K})\simeq A\otimes\mathcal{K}$ (see \autoref{dfn:Cusg}). Given unitary elements $u\in\mathcal{U}(\her(a)^\sim)$ and $v\in\mathcal{U}(\her(b)^\sim)$, we first observe that their respective scalar part can be assumed to be equal without loss of generality, since the equivalence relation $\sim_1$ identifies unitary elements up to homotopy equivalence. Besides, $\psi(\begin{smallmatrix} \her(a) & 0\\ 0 & \her(b) \end{smallmatrix})\subseteq \her(a\oplus b)$ and hence $u\oplus v:=\psi^\sim(\begin{smallmatrix} u & 0\\ 0 & v \end{smallmatrix})$ is a unitary element of $\her(a\oplus b)^\sim$. We conclude that $[(a,u)]+[(b,v)]:=[(a\oplus b, u\oplus v)]$ is a well-defined element in $\Cu_1(A)$.

Therefore, we obtain a partially ordered monoid $(\Cu_1(A),+,\leq)$ whose neutral element is $[(0_A,1_\mathbb{C})]$ and the proof is left to the reader. By positive elements, we mean elements that are greater or equal to the neutral element. It is easy to see that the positive elements of $\Cu_1(A)$ are those of the form $([a,1])$ for some $a\in (A\otimes\mathcal{K})_+$ and thus $\Cu_1(A)$ is in general not positively ordered.
We now show that $(\Cu_1(A),\leq)$ satisfies the order-theoretic axioms (O1)-(04) mentioned in \autoref{dfn:llCU}.

\begin{prop}
\label{cor:existencelift}
Let $A$ be a $\CatCa$-algebra of stable rank one. Let $(a_n)_n$ be a sequence in $A_+$ such that $a_n\lesssim_{\Cu} a_m$, for any $n\leq m$. Let $a\in A_+$ be any representative of $\sup\limits_{n}[a_n]\in\Cu(A)$ obtained from axiom (O1) and the stable rank one hypothesis. Then, for any unitary element $u\in \her(a)^\sim$, there exists a unitary element $u_n$ in $\her(a_n)^\sim$ for some $n\in\N$ such that $[(a_n,u_n)]\leq [(a,u)]$ in $\Cu_1(A)$.
\end{prop}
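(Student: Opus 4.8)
The plan is to realize $\her(a)$ as the closed increasing union of the hereditary subalgebras $\her(a_n)$ sitting inside it through the standard maps, and then to produce $u_n$ by approximating $u$ inside this union. The essential structural input is that, since $A$ has stable rank one and $a$ is the representative of $\sup_n[a_n]$ furnished by (O1) together with the stable rank one hypothesis, one may choose positive elements $\tilde a_n\sim_{\Cu}a_n$ with $\her(\tilde a_1)\subseteq\her(\tilde a_2)\subseteq\cdots\subseteq\her(a)$ and $\her(a)=\overline{\bigcup_n\her(\tilde a_n)}$. For each $n$ the inclusion $\her(\tilde a_n)\hookrightarrow\her(a)$ is a standard map in the sense of \autoref{rmk:canoinj} (take the realizing partial isometry to be the support projection $p_{\tilde a_n}\leq p_a$), and composing it with the standard isomorphism $\her(a_n)\simeq\her(\tilde a_n)$ implementing $a_n\sim_{\Cu}\tilde a_n$ recovers $\theta_{a_n a}$ at the level of $\K_1$ by \autoref{lma:invarK1inj}. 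Thus it suffices to find, for some $n$, a unitary $v_n\in\mathcal{U}(\her(\tilde a_n)^\sim)$ with $[v_n]=[u]$ in $\K_1(\her(a)^\sim)$, and then to transport it back to $\her(a_n)^\sim$.

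Second, I would construct $v_n$ by an approximation-and-polar-decomposition argument. Write $u=u_0+\lambda$ with $u_0\in\her(a)$ and $\lambda\in\T$. Since $u_0$ lies in the closure $\overline{\bigcup_n\her(\tilde a_n)}$, for any $\epsilon>0$ there is an index $n$ and an element $b_n\in\her(\tilde a_n)$ with $\Vert u_0-b_n\Vert<\epsilon$. Setting $w_n:=b_n+\lambda\in\her(\tilde a_n)^\sim$ we get $\Vert u-w_n\Vert<\epsilon$, so that for $\epsilon$ small enough $w_n$ is invertible; moreover $w_n^*w_n=1+c_n$ with $c_n\in\her(\tilde a_n)$, and this element is positive and invertible in $\her(\tilde a_n)^\sim$. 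Hence $v_n:=w_n(w_n^*w_n)^{-1/2}$ is a genuine unitary of $\her(\tilde a_n)^\sim$, the inverse square root being computed by functional calculus inside $\her(\tilde a_n)^\sim$, and $v_n$ is norm-close to $u$. In particular $\Vert v_n-u\Vert<2$, so $v_n$ and $u$ are homotopic in $\mathcal{U}(\her(a)^\sim)$ and therefore $[v_n]=[u]$ in $\K_1(\her(a)^\sim)$.

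Finally, let $u_n\in\mathcal{U}(\her(a_n)^\sim)$ be the image of $v_n$ under the (unitized) standard isomorphism $\her(\tilde a_n)^\sim\simeq\her(a_n)^\sim$. Since $a_n\lesssim_{\Cu}a$ and, by the first step, $[\theta_{a_n a}^\sim(u_n)]=[v_n]=[u]$ in $\K_1(\her(a)^\sim)$, the very definition of $\lesssim_1$ yields $(a_n,u_n)\lesssim_1(a,u)$, that is $[(a_n,u_n)]\leq[(a,u)]$, as desired. The delicate point, and the one I expect to be the main obstacle, is the first step: justifying that the representative $a$ coming from (O1) and stable rank one can be arranged so that $\her(a)$ is the closed increasing union of the $\her(\tilde a_n)$, with these inclusions inducing the standard maps. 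This is where the stable rank one hypothesis is genuinely used, through the Peligrad--Zsid\'o and polar-decomposition picture of \autoref{prop:viewequi}; once it is in place, the remaining approximation argument is routine and, crucially, keeps everything at the level of single unitaries, so that no matrix amplification of $\K_1$-classes is needed.
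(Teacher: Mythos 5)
Your overall architecture is the same as the paper's: exhaust $\her(a)$ by a nested sequence of hereditary subalgebras, lift $[u]$ to one of them using that $\K_1$ commutes with increasing unions together with $\K_1$-surjectivity in stable rank one, and then transport the resulting unitary forward by a standard map. Your steps 2 and 3 are fine; the approximation-plus-polar-decomposition construction of $v_n$ is just a hands-on proof of the $\K_1$-continuity and $\K_1$-surjectivity that the paper invokes by citation, and the final transport via transitivity of $\lesssim_1$ is exactly what the paper does.

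The gap is the one you flag yourself: the claim that one can choose $\tilde a_n\sim_{\Cu}a_n$ (Cuntz \emph{equivalent} to $a_n$) with $\her(\tilde a_1)\subseteq\her(\tilde a_2)\subseteq\cdots$ and $\overline{\bigcup_n\her(\tilde a_n)}=\her(a)$. This is strictly stronger than what the standard stable-rank-one machinery hands you. R{\o}rdam's lemma (and \cite[Lemma 4.3]{BPT06}, which the paper uses later in the proof of \autoref{prop:llCU1}) only lets you conjugate the $\epsilon$-cutdowns $(a_n-\epsilon_n)_+$ into a nested chain, not the $a_n$ themselves; a proof of your stronger statement would need the Coward--Elliott--Ivanescu Hilbert-module picture of $\Cu$ in stable rank one, plus a verification that the resulting inclusions induce the standard maps of \autoref{rmk:canoinj} on $\K_1$, none of which you supply. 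The paper sidesteps this entirely and more cheaply: it takes $b_n:=(a-1/n)_+$, whose hereditary subalgebras are \emph{literally} nested inside $\her(a)$ with dense union (no conjugation, no choice of representatives), lifts $[u]$ to a unitary $u_n\in\her(b_n)^\sim$ for some $n$, and only then reconnects to the original sequence: since $[b_n]\ll[a]=\sup_m[a_m]$, there is $m$ with $[b_n]\leq[a_m]$, and $\theta_{b_na_m}(u_n)\in\her(a_m)^\sim$ has the right $\K_1$-image by transitivity. Note that the conclusion only asks for the unitary to live in $\her(a_n)^\sim$ for \emph{some} index, so there is no need to keep the Cuntz class equal to $[a_n]$ along the way; replacing your step 1 by this cutdown argument makes the rest of your proof go through unchanged.
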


\begin{proof}
For any $n\in\N$, consider $b_n:=(a-1/n)_+$. It is well-known that $([b_n])_n$ is a $\ll$-increasing sequence in $\Cu(A)$ whose supremum is $[a]$; see e.g. \cite[Proposition 2.61]{T19}. Also, it is not hard to check that $\AbGp-\lim\limits_{\longrightarrow}(\K_1(\her(b_n)),\chi_{b_nb_m})\simeq (\K_1(\her(a)),\chi_{b_na})$.
Since we are in the category $\AbGp$, for any $[u]\in\K_1(\her(a))$, we can find $n$ and $[u_n]\in \K_1(\her(b_n))$ such that $\chi_{b_na}([u_n])=[u]$. Since $A$ has stable rank one, then so does $\her(b_n)^\sim$. Hence using $\K_1$-surjectivity (see \cite[Theorem 2.10]{R87}), we can find a unitary element $u_n$ of $\her(b_n)^\sim$ whose $\K_1$-class is $[u_n]$.
On the other hand, $([a_m])_m$ is an increasing sequence in $\Cu(A)$ whose supremum is $[a]$ and hence there exists $m\in\N$ such that $[b_n]\leq [a_m]$ in $\Cu(A)$. So we can consider the unitary element $\theta_{b_na_m}(u_n)$ in $\her(a_m)^\sim$. By transitivity of $\lesssim_1$, we obtain that $\chi_{a_ma}([\theta_{b_na_m}(u_n)])=\chi_{a_ma}\circ\chi_{b_na_m}([u_n])=\chi_{b_na}([u_n])=[u]$ and the result follows.
\end{proof}

\begin{lma}\label{lma:O1}
Let $A$ be a $\CatCa$-algebra of stable rank one. Then any increasing sequence $([(a_n,u_n)])_{n\in\N}$ in $\Cu_1(A)$ has a supremum $[(a,u)]$ in $\Cu_1(A)$. In particular, $[a]=\sup\limits_n [a_n]$ in $\Cu(A)$ and there exists $n\in\N$ large enough such that $[u]=\chi_{a_na}([u_n])$ in $\K_1(\her(a))$.
\end{lma}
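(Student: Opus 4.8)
The plan is to build the supremum $[(a,u)]$ componentwise, mirroring how $\Cu(A)$ handles (O1) on the positive part while using the functoriality of the standard maps to pin down a coherent $\K_1$-datum. First I would unpack the hypothesis: $([(a_n,u_n)])_n$ increasing means $(a_n,u_n)\lesssim_1(a_{n+1},u_{n+1})$, i.e. $a_n\lesssim_{\Cu}a_{n+1}$ together with $\chi_{a_na_{n+1}}([u_n])=[u_{n+1}]$ in $\K_1(\her(a_{n+1}))$. In particular the positive parts form an increasing sequence in $\Cu(A)$, so by (O1) for $\Cu$ its supremum $\sup_n[a_n]$ exists; arguing as in \autoref{cor:existencelift} I fix a representative $a\in(A\otimes\mathcal{K})_+$, which will be the positive component of the candidate supremum. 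Since $a_n\lesssim_{\Cu}a$ for every $n$, all standard maps $\theta_{a_na}$ and the induced homomorphisms $\chi_{a_na}\colon\K_1(\her(a_n))\to\K_1(\her(a))$ are defined.

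The key step is to produce the unitary component. Using the composition law for standard maps at the $\K_1$-level, namely $\chi_{a_na}=\chi_{a_{n+1}a}\circ\chi_{a_na_{n+1}}$ (exactly the functoriality established in the proof of transitivity of $\lesssim_1$, via \autoref{lma:invarK1inj}), I compute $\chi_{a_na}([u_n])=\chi_{a_{n+1}a}(\chi_{a_na_{n+1}}([u_n]))=\chi_{a_{n+1}a}([u_{n+1}])$. Hence the classes $\chi_{a_na}([u_n])$ coincide for all $n$, defining a single $g\in\K_1(\her(a))\cong\K_1(\her(a)^\sim)$. Because $A$, hence $\her(a)^\sim$, has stable rank one, $\K_1$-surjectivity (see \cite[Theorem 2.10]{R87}) lets me choose a unitary $u\in\mathcal{U}(\her(a)^\sim)$ with $[u]=g$. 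I claim $[(a,u)]$ is the desired supremum.

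It remains to verify the two halves of the supremum property, and both fall out of the same composition law. For the upper-bound property: $a_n\lesssim_{\Cu}a$ and $\chi_{a_na}([u_n])=g=[u]$ give $(a_n,u_n)\lesssim_1(a,u)$, so $[(a_n,u_n)]\leq[(a,u)]$ for all $n$. For minimality, suppose $[(c,w)]$ is any upper bound, i.e. $(a_n,u_n)\lesssim_1(c,w)$ for all $n$. Then $[a]=\sup_n[a_n]\leq[c]$, so $a\lesssim_{\Cu}c$; and using $\chi_{a_nc}=\chi_{ac}\circ\chi_{a_na}$ I obtain $\chi_{ac}([u])=\chi_{ac}(g)=\chi_{ac}(\chi_{a_na}([u_n]))=\chi_{a_nc}([u_n])=[w]$, whence $(a,u)\lesssim_1(c,w)$ and $[(a,u)]\leq[(c,w)]$. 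Thus $[(a,u)]=\sup_n[(a_n,u_n)]$, and the final clause $[u]=\chi_{a_na}([u_n])$ holds for every $n$, a fortiori for some large $n$.

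I expect no serious obstacle, as the statement is essentially an assembly of facts already in place; the two points requiring care are (i) that the standard maps are genuinely functorial at the $\K_1$-level, so that the classes $\chi_{a_na}([u_n])$ are coherent and independent of $n$ (this is \autoref{lma:invarK1inj} together with the composition identity from the transitivity argument), and (ii) that a prescribed $\K_1$-class over $\her(a)$ is realized by an honest unitary, which is where stable rank one enters through $\K_1$-surjectivity. Unlike \autoref{cor:existencelift}, no identification of $\K_1(\her(a))$ with an inductive limit is needed: coherence of the $\K_1$-data is automatic from the increasing hypothesis, which is what keeps the argument short.
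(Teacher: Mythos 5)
Your proposal is correct and follows essentially the same route as the paper: take the supremum $[a]$ of the positive parts via (O1) in $\Cu(A)$, use transitivity of $\lesssim_1$ (i.e.\ the composition law for the maps $\chi$) to see that the classes $\chi_{a_na}([u_n])$ are independent of $n$ and define $[u]$, and then verify the upper-bound and minimality properties by the same composition identities. The only difference is that you make explicit the appeal to $\K_1$-surjectivity under stable rank one to realize $[u]$ by an honest unitary, which the paper leaves implicit; this is a welcome clarification, not a deviation.
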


\begin{proof}
Let $([(a_n,u_n)])_{n\in\N}$ be an increasing sequence in $\Cu_1(A)$. Then $([a_n])_{n\in\N}$ is an increasing sequence in $\Cu(A)$. By (O1) in $\Cu(A)$, the sequence $([a_n])_{n\in\N}$ has a supremum $[a]$ in $\Cu(A)$. Now, let $n\leq m$. Since $[(a_n,u_n)]\leq [(a_m,u_m)]$, we get that $\chi_{a_na_m}([u_n])=[u_m]$. By transitivity of $\lesssim_1$, we obtain that $\chi_{a_na}([u_n])=\chi_{a_ma}([u_m])$ in $\K_1(\her(a))$. Write $[u]:= \chi_{a_na}([u_n])$. We deduce that $[(a,u)]\geq [(a_n,u_n)]$ in $\Cu_1(A)$ for any $n\in\N$.

Let us check that $[(a,u)]$ is in fact the supremum of the sequence $([(a_n,u_n)])_n$. Let $[(b,v)]\in\Cu_1(A)$ such that $[(b,v)]\geq [(a_n,u_n)]$ for every $n\in\N$. Since $[a]=\sup\limits_{n\in\N}[a_n]$, we have $[b]\geq [a]$ in $\Cu(A)$. Using transitivity of $\lesssim_1$, the following diagram is commutative: 
\[
\xymatrix{
\K_1(\her(a_n)^\sim)\ar[rd]^{\chi_{a_na}}\ar@/^/[rrd]^{\chi_{a_nb}}\ar[dd]_{\chi_{a_na_m}} & &  \\
& \K_1(\her(a)^\sim)\ar[r]_{\chi_{ab}} & \K_1(\her(b)^\sim) \\
\K_1(\her(a_m)^\sim)\ar[ru]^{\chi_{a_ma}}\ar@/_/[rru]_{\chi_{a_mb}} & &
} 
\]
Hence for every $n$ and $m$ in $\N$, we have $\chi_{a_nb}([u_n])=\chi_{a_mb}([u_m])=\chi_{ab}([u])$ in $\K_1(\her(b))$. We deduce that $\chi_{ab}([u])=[v]$ in $\K_1(\her(b))$ and hence $[(a,u)]\leq [(b,v)]$. 
\end{proof}

\begin{prop}
\label{prop:llCU1}
Let $A$ be a $\CatCa$-algebra of stable rank one and let $[(a,u)],[(b,v)]\in\Cu_1(A)$. Then $[(a,u)]\ll [(b,v)]$ if and only if $[a]\ll[b]$ in $\Cu(A)$ and $\chi_{ab}([u])=[v]$ in $\K_1(\her(b))$.
\end{prop}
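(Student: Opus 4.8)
The plan is to read off both conditions from the behaviour of $\ll$ under the suprema computed in \autoref{lma:O1}, treating the two implications separately and in each case building or analysing a witnessing increasing sequence.

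For the forward direction, suppose $[(a,u)]\ll[(b,v)]$. The $\K_1$-condition is immediate: since $\ll$ refines $\leq$, we have $[(a,u)]\leq[(b,v)]$, and by the definition of $\lesssim_1$ this says precisely that $[a]\leq[b]$ and $\chi_{ab}([u])=[v]$. To obtain $[a]\ll[b]$ in $\Cu(A)$, I would take an arbitrary increasing sequence $([c_n])_n$ in $\Cu(A)$ with $\sup_n[c_n]\geq[b]$, fix a positive representative $c$ of the supremum (using stable rank one and (O1)), and lift the sequence to $\Cu_1(A)$: set $[w]:=\chi_{bc}([v])$, use \autoref{cor:existencelift} to produce a unitary $w_{n_0}\in\her(c_{n_0})^\sim$ for some $n_0$ with $\chi_{c_{n_0}c}([w_{n_0}])=[w]$, and propagate it along the standard maps by $w_m:=\theta_{c_{n_0}c_m}(w_{n_0})$ for $m\geq n_0$. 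By \autoref{lma:O1} the tail $([(c_m,w_m)])_{m\geq n_0}$ is increasing with supremum $[(c,w)]\geq[(b,v)]$, so the hypothesis $[(a,u)]\ll[(b,v)]$ forces $[(c_k,w_k)]\geq[(a,u)]$ for some $k$; in particular $[c_k]\geq[a]$, which is what we needed.

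For the converse, assume $[a]\ll[b]$ and $\chi_{ab}([u])=[v]$, and let $([(c_n,w_n)])_n$ be increasing in $\Cu_1(A)$ with supremum $[(c,w)]\geq[(b,v)]$, where by \autoref{lma:O1} we have $[c]=\sup_n[c_n]\geq[b]$ and $\chi_{c_nc}([w_n])=[w]$ for all large $n$. From $[a]\ll[b]\leq[c]$ I obtain some $k$ with $[a]\leq[c_k]$, so that $\chi_{ac_k}$ is defined. Chasing the commuting triangles of standard maps gives, in $\K_1(\her(c))$,
\[
\chi_{c_kc}\bigl(\chi_{ac_k}([u])\bigr)=\chi_{ac}([u])=\chi_{bc}([v])=[w]=\chi_{c_kc}([w_k])
\]
for $k$ chosen large enough. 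The remaining task is to descend this equality from $\K_1(\her(c))$ to a finite stage.

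The main obstacle is exactly this last step: $\chi_{ac_k}([u])$ and $[w_k]$ agree only after applying $\chi_{c_kc}$, and to conclude $[(a,u)]\leq[(c_m,w_m)]$ for some finite $m$ I must know that their difference is already killed at a finite stage. This is where I would invoke the continuity of $\K_1$ along a Cuntz-increasing sequence, namely the inductive-limit identification $\AbGp-\lim\limits_{\longrightarrow}(\K_1(\her(c_n)),\chi_{c_nc_m})\simeq(\K_1(\her(c)),\chi_{c_nc})$, which generalizes the identity used in the proof of \autoref{cor:existencelift} from the sequence $((c-1/n)_+)_n$ to an arbitrary increasing sequence with supremum $[c]$. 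Granting this, the kernel-at-a-finite-stage property of inductive limits in $\AbGp$ yields $m\geq k$ with $\chi_{c_kc_m}\bigl(\chi_{ac_k}([u])-[w_k]\bigr)=0$, i.e. $\chi_{ac_m}([u])=\chi_{c_kc_m}([w_k])=[w_m]$, the last equality holding because the sequence is increasing. Together with $[a]\leq[c_k]\leq[c_m]$ this gives $[(a,u)]\leq[(c_m,w_m)]$, and since the increasing sequence was arbitrary, $[(a,u)]\ll[(b,v)]$.
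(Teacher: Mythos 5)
Your forward direction is exactly the paper's argument: lift an arbitrary increasing sequence $([c_n])_n$ dominating $[b]$ to an increasing sequence in $\Cu_1(A)$ with supremum $[(c,\theta_{bc}(v))]\geq[(b,v)]$ via \autoref{cor:existencelift} and \autoref{lma:O1}, then apply the hypothesis. No issues there.

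The converse, however, has a genuine gap at precisely the point you flag yourself. You write that you would ``invoke'' the identification $\AbGp-\lim\limits_{\longrightarrow}(\K_1(\her(c_n)),\chi_{c_nc_m})\simeq(\K_1(\her(c)),\chi_{c_nc})$ for an \emph{arbitrary} increasing sequence with supremum $[c]$, presenting it as a routine generalization of the identity used in \autoref{cor:existencelift}. It is not routine: the identity in \autoref{cor:existencelift} is proved for the sequence $((c-1/n)_+)_n$, whose hereditary subalgebras form a genuine increasing union with dense union in $\her(c)$, so the continuity of $\K_1$ applies directly. For an arbitrary increasing sequence $([c_n])_n$ the algebras $\her(c_n)$ are not nested, the connecting maps $\chi_{c_nc_m}$ come from non-canonical embeddings, and the injectivity-at-a-finite-stage property you need (a class killed in $\K_1(\her(c))$ is killed at some finite stage) is exactly the non-trivial content of the proposition. ``Granting this'' is granting the theorem.

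The paper closes this gap by a reduction to the nested case: arguing as in \cite[Lemma 4.3]{BPT06}, one chooses $\epsilon_n\searrow 0$ and unitaries so that the hereditary subalgebras of the $(c_n-\epsilon_n)_+$ become, after conjugation, an increasing chain with $\sup_n[(c_n-\epsilon_n)_+]=[c]$; the $\K_1$-inductive-limit identification is then legitimately available for \emph{that} sequence. One transfers the data back and forth between the two sequences using \autoref{cor:existencelift} (to replace $w_k$ by a unitary $\tilde{w_k}$ at level $(c_k-\epsilon_k)_+$) and the interpolation $[a]\leq[(c_l-\epsilon_l)_+]$ coming from $[a]\ll[b]$, performs the descent to a finite stage inside the nested system, and only then pushes the resulting equality up to some $\K_1(\her(c_{l'}))$. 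You should either reproduce this reduction or supply an independent proof of the inductive-limit claim for arbitrary increasing sequences; as written, the converse is not established.
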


\begin{proof}
Suppose that $[(a,u)]\ll [(b,v)]$. A fortiori $[(a,u)]\leq[(b,v)]$, so $\chi_{ab}[u]=[v]$.
Now let $([c_n])_n$ be an increasing sequence in $\Cu(A)$ whose supremum $[c]$ satisfies $[c]\geq [b]$. Write $w:=\theta_{bc}(v)$ and consider $s:=[(c,w)]\in\Cu_1(A)$. By \autoref{cor:existencelift}, we know that there exists $n\in\N$ and a unitary element $w_n$ of $\her(c_n)^\sim$ such that $\chi_{c_nc}([w_n])=[w]$. Now define $s_k:=[c_{n+k},\theta_{c_nc_{n+k}}(w_n)]$. Then $(s_k)_k$ is an increasing sequence in $\Cu_1(A)$. By the description of suprema obtained in \autoref{lma:O1}, we know that $(s_k)_k$ admits $s$ as a supremum. Further, $s\geq [(b,v)]$ and since $[(a,u)]\ll [(b,v)]$, we deduce that there exists $k\in\N$ such that $[(a,u)]\leq s_k$ and hence that $[a]\leq [c_{n+k}]$. We conclude that $[a]\ll [b]$ in $\Cu(A)$.
                                                       
Conversely, let $[(a,u)],[(b,v)]\in\Cu_1(A)$ such that $[a]\ll [b]$ in $\Cu(A)$ and $\chi_{ab}[u]=[v]$ in $\K_1(\her(b))$. Let $([(c_n,w_n)])_n$ be an increasing sequence in $\Cu_1(A)$ that has a supremum in $\Cu_1(A)$, say $[(c,w)]$. Also suppose that $[(b,v)]\leq [(c,w)]$. First, by transitivity of $\lesssim_1$, observe that $\chi_{ac}([u])= \chi_{bc}\circ\chi_{ab}([u])= [w]$ in $\K_1(\her(c))$.
 
Arguing as in the proof of \cite[Lemma 4.3]{BPT06}, since $A$ has stable rank one, we can find a strictly decreasing sequence $(\epsilon_n)_n$ in $\R_+^*$ and unitary elements $(u_n)_n$ in $(A\otimes\mathcal{K})^\sim$ such that
\[
\her(c_1-\epsilon_1)_+\subseteq u_1(\her(c_2-\epsilon_2)_+)u_1^*\subseteq ...\subseteq u_n...u_1(\her(c_{n+1}-\epsilon_{n+1})_+)u_1^*...u_n^*\subseteq... 
\]
and such that $\sup\limits_n [(c_n-\epsilon_n)_+]=[c]$ in $\Cu(A)$. 
Hence, by \autoref{cor:existencelift} we can find $k\in\N$ and a unitary element $\tilde{w_k}$ of $(\her (c_k-\epsilon_k)_+)^\sim$ such that $\chi_{(c_k-\epsilon_k)_+c_k}[\tilde{w_k}]=[w_k]$ in $\K_1(\her(c_k))$.
Now, using the same argument as in the proof of \autoref{cor:existencelift}, we observe that 
\[
\AbGp-\lim\limits_{\longrightarrow}(\K_1(\her (c_n-\epsilon_n)_+),\chi_{(c_n-\epsilon_n)_+ (c_m-\epsilon_m)_+})\simeq (\K_1(\her(c)),\chi_{(c_n-\epsilon_n)_+c}).
\] 
On the other hand, since $[a]\ll [b] \leq \sup\limits_n [(c_n-\epsilon_n)_+] $, there exists $l\in\N$ such that $[a]\leq [(c_{l}-\epsilon_l)_+]$ in $\Cu(A)$. Without loss of generality,  $l\geq k$.
Using transitivity of $\lesssim_1$ again, we have that $\chi_{(c_l-\epsilon_l)_+c}([\tilde{w_l}])=\chi_{c_l c}\circ\chi_{(c_l-\epsilon_l)_+c_l} ([\tilde{w_l}])=[w]=\chi_{ac}([u])= \chi_{(c_l-\epsilon_l)_+c}\circ\chi_{a(c_l-\epsilon_l)_+}([u])$ in $\K_1(\her(c))$. Since we are in the category $\AbGp$, there exists some $l'\geq l$ such that $\chi_{(c_l-\epsilon_l)_+ (c_{l'}-\epsilon_{l'})_+}([\tilde{w_l}])= \chi_{(c_l-\epsilon_l)_+ (c_{l'}-\epsilon_{l'})_+}\circ\chi_{(ac_l-\epsilon_l)_+}([u])$. Composing with $\chi_{(c_{l'}-\epsilon_{l'})_+ c_{l'}}$ on both sides, we finally obtain that $[w_{l'}]=\chi_{ac_{l'}}[u]$ and hence $[(a,u)]\leq [(c_{l'},w_{l'})]$, which completes the proof.
\end{proof}

\begin{cor}
\label{cor:compactcu1}
Let $A$ be a $\CatCa$-algebra of stable rank one and let $[(a,u)]\in\Cu_1(A)$. Then $[(a,u)]$ is compact if and only if $[a]$ is compact in $\Cu(A)$.
\end{cor}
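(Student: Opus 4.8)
The plan is to derive this corollary directly from \autoref{prop:llCU1} by specializing to the diagonal case $[(b,v)]=[(a,u)]$. Recall that an element $s$ of an ordered semigroup is compact precisely when $s\ll s$. So I would simply apply the characterization of the way-below relation just established, taking both pairs to be $(a,u)$.

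First I would prove the forward direction. Suppose $[(a,u)]$ is compact, i.e. $[(a,u)]\ll[(a,u)]$. By \autoref{prop:llCU1} applied with $(b,v)=(a,u)$, the condition $[(a,u)]\ll[(a,u)]$ implies in particular that $[a]\ll[a]$ in $\Cu(A)$, which is exactly the statement that $[a]$ is compact in $\Cu(A)$. The $\K_1$-condition $\chi_{aa}([u])=[u]$ is automatically satisfied here since $\chi_{aa}=\id$ (as noted in \autoref{rmk:canoinj}, $\theta_{aa}$ may be taken to be the identity), so it imposes no obstruction.

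Conversely, suppose $[a]$ is compact in $\Cu(A)$, i.e. $[a]\ll[a]$. To conclude $[(a,u)]\ll[(a,u)]$ via \autoref{prop:llCU1}, I need the two hypotheses of that proposition with $(b,v)=(a,u)$: namely $[a]\ll[a]$ in $\Cu(A)$, which holds by assumption, and $\chi_{aa}([u])=[u]$ in $\K_1(\her(a))$, which again holds trivially because $\chi_{aa}$ is the identity homomorphism. Hence \autoref{prop:llCU1} yields $[(a,u)]\ll[(a,u)]$, i.e. $[(a,u)]$ is compact.

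This argument is essentially a one-line corollary, so there is no genuine obstacle; the only point requiring a moment's care is confirming that the $\K_1$-condition in \autoref{prop:llCU1} becomes vacuous in the diagonal case, which follows immediately from the fact that a standard self-map $\theta_{aa}$ can be chosen to be $\id_{\her(a)}$ (see \autoref{rmk:canoinj}), so that $\chi_{aa}=\id$ on $\K_1(\her(a))$.
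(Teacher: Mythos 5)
Your argument is correct and is precisely the intended one: the paper states this as an immediate consequence of \autoref{prop:llCU1} (with no written proof), obtained by taking $(b,v)=(a,u)$ and observing that the $\K_1$-condition is vacuous because $\theta_{aa}$ may be chosen to be $\id_{\her(a)}$. Nothing further is needed.
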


\begin{thm}
\label{prop:cuntzaxiom}
Let $A$ be a $\CatCa$-algebra of stable rank one. Then $(\Cu_1(A),\leq)$ satisfies axioms (O1), (O2), (O3), and (O4).
\end{thm}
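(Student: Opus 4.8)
The plan is to derive each of the four axioms for $\Cu_1(A)$ from the corresponding axiom for $\Cu(A)$ --- which holds since $\Cu(A)$ is an abstract $\Cu$-semigroup --- by transporting through the two structural descriptions already established: the computation of suprema in \autoref{lma:O1} and the characterization of the way-below relation in \autoref{prop:llCU1}. Axiom (O1) then requires no further argument, as it is exactly the statement of \autoref{lma:O1}: every increasing sequence $([(a_n,u_n)])_n$ admits the supremum $[(a,u)]$, where $[a]=\sup_n[a_n]$ in $\Cu(A)$ and $[u]=\chi_{a_na}([u_n])$.

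For (O2), given $[(a,u)]\in\Cu_1(A)$ I would set $b_n:=(a-1/n)_+$, so that $([b_n])_n$ is a $\ll$-increasing sequence in $\Cu(A)$ with $\sup_n[b_n]=[a]$. Exactly as in the proof of \autoref{cor:existencelift}, the group $\K_1(\her(a))$ is the inductive limit of $(\K_1(\her(b_n)),\chi_{b_nb_m})$, so I can select $n_0$ together with a class in $\K_1(\her(b_{n_0}))$ mapping to $[u]$, and propagate it along the connecting maps $\chi_{b_nb_m}$ to produce a compatible family; using $\K_1$-surjectivity (valid since each $\her(b_n)^\sim$ inherits stable rank one) I realize these classes by genuine unitaries $u_n\in\mathcal{U}(\her(b_n)^\sim)$ satisfying $\chi_{b_nb_m}([u_n])=[u_m]$ and $\chi_{b_na}([u_n])=[u]$. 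By \autoref{prop:llCU1} the tail sequence $s_n:=[(b_n,u_n)]$ (for $n\geq n_0$) is $\ll$-increasing, and by \autoref{lma:O1} its supremum is precisely $[(a,u)]$; this establishes (O2).

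Axioms (O3) and (O4) both reduce to the corresponding statement in $\Cu(A)$ together with a single additivity fact at the $\K_1$-level. Writing $\oplus$ as in the definition of the addition on $\Cu_1(A)$, \autoref{prop:llCU1} turns (O3) into the two assertions $[a_1\oplus a_2]\ll[b_1\oplus b_2]$ --- which is (O3) in $\Cu(A)$ --- and $\chi_{a_1\oplus a_2,\,b_1\oplus b_2}([u_1\oplus u_2])=[v_1\oplus v_2]$. Likewise, \autoref{lma:O1} turns (O4) into the identity $\sup_n([a_n]+[b_n])=[a]+[b]$ --- which is (O4) in $\Cu(A)$ --- together with $\chi_{a_n\oplus b_n,\,a\oplus b}([u_n\oplus v_n])=[u\oplus v]$. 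In both cases everything therefore hinges on showing that the standard maps and the $\K_1$-classes behave additively with respect to $\oplus$.

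The main obstacle, and the only genuinely new computation, is precisely this $\K_1$-additivity, namely
\[
\chi_{a_1\oplus a_2,\,b_1\oplus b_2}([u_1\oplus u_2])=[\theta_{a_1b_1}(u_1)\oplus\theta_{a_2b_2}(u_2)].
\]
To prove it I would work in the matrix picture $M_2(A\otimes\mathcal{K})\simeq A\otimes\mathcal{K}$ afforded by $\psi$, which again has stable rank one. If $\alpha_1,\alpha_2$ realize the Peligrad--Zsid\'o subequivalences $p_{a_i}\lesssim_{PZ}p_{b_i}$ as in \autoref{prop:viewequi}, then $\diag(\alpha_1,\alpha_2)$ realizes $p_{\diag(a_1,a_2)}\lesssim_{PZ}p_{\diag(b_1,b_2)}$, since support projections of direct sums are the direct sums of the support projections. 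A direct computation in this picture shows that the associated standard map sends $\diag(u_1,u_2)$ to $\diag(\theta^\sim_{a_1b_1,\alpha_1}(u_1),\theta^\sim_{a_2b_2,\alpha_2}(u_2))$, because conjugation by $\diag(\alpha_1,\alpha_2)$ is diagonal on the diagonal part of the hereditary subalgebra and preserves the common scalar term. Transporting through $\psi^\sim$ identifies this element with $\theta_{a_1b_1}(u_1)\oplus\theta_{a_2b_2}(u_2)$, while \autoref{lma:invarK1inj} guarantees that the resulting $\K_1$-class is independent of the chosen realizing partial isometry, so the displayed identity holds. Feeding this back into the previous paragraph then completes (O3) and (O4), and hence the theorem.
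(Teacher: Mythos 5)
Your proposal is correct and follows essentially the same route as the paper: (O1) from \autoref{lma:O1}, (O2) by lifting a unitary along the $(a-1/n)_+$ cutdowns via $\K_1$-surjectivity and then invoking \autoref{prop:llCU1} and \autoref{lma:O1}, and (O3)--(O4) by reducing to the corresponding axioms in $\Cu(A)$ plus the compatibility of the standard maps with $\oplus$. The only difference is that you explicitly verify the $\K_1$-additivity identity $\chi_{a_1\oplus a_2,\,b_1\oplus b_2}([u_1\oplus u_2])=[\theta_{a_1b_1}(u_1)\oplus\theta_{a_2b_2}(u_2)]$ via the diagonal partial isometry $\diag(\alpha_1,\alpha_2)$, a step the paper absorbs into the (reader-verified) claim that $(\Cu_1(A),+,\leq)$ is an ordered monoid; your computation is a valid and welcome way to fill in that detail.
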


\begin{proof}
(O1) follows from \autoref{lma:O1}.

(O2): Let $s:=[(a,u)]\in\Cu_1(A)$. We want to write $s$ as the supremum of a $\ll$-increasing sequence in $\Cu_1(A)$. By (O2), we can find a $\ll$-increasing sequence $([a_n])_n$ in $\Cu(A)$ such that $\sup\limits_n[a_n]=[a]$. Let $a_n$ be any representative of $[a_n]$ in $(A\otimes\mathcal{K})_+$. Using \autoref{cor:existencelift}, we know that we can find a unitary element $u_n$ of $\her(a_n)^\sim$ for some $n\in\N$ such that $[(a_n,u_n)]\leq [(a,u)]$. Now we consider $s_k:=[(a_{n+k},\theta_{a_na_{n+k}}(u_n))]$, for any $k\in\N$. Then, by \autoref{prop:llCU1} we deduce that $(s_k)_k$ is a $\ll$-increasing sequence in $\Cu_1(A)$. By the description of suprema obtained in \autoref{lma:O1}, $\sup\limits_k s_k=s$.

(O3): Let $[(a_1,u_1)]\ll [(b_1,v_1)]$ and $[(a_2,u_2)]\ll [(b_2,v_2)]$. We already know that $[(a_1,u_1)]+[(a_2,u_2)]\leq [(b_1,v_1)]+[(b_2,v_2)]$ and that $[a_1]+[a_2]\ll [b_1]+[b_2]$ in $\Cu(A)$. The conclusion follows from \autoref{prop:llCU1}. 

(O4): Let $([(a_n,u_n)])_{n\in\N}$ and $([(b_n,v_n)])_{n\in\N}$  be two increasing sequences in $\Cu_1(A)$. Let $[(a,u)]:= \sup\limits_{n\in\N}[(a_n,u_n)]$ and $[(b,v)]:= \sup\limits_{n\in\N}[(b_n,v_n)]$. Now we define $[(c_n,w_n)] := [(a_n,u_n)]+[(b_n,v_n)]$ for any $n\in\N$. Since $[c_n]=[a_n]+[b_n]$ in $\Cu(A)$ and $\Cu(A)$ satisfies (O4), we have $\sup\limits_{n\in\N} [c_n]= [a\oplus b ]$. Also, we know that $\chi_{a_na}([u_n])=[u]$ and $\chi_{b_nb}([v_n])=[v]$, and hence we obtain $\chi_{c_nc}(u_n\oplus v_n)=u\oplus v$. We conclude that $\sup$ and addition are compatible in $\Cu_1(A)$, using \autoref{lma:O1}.
\end{proof}

\subsection{The \texorpdfstring{$\Cu_1$}{Cu1}-semigroup as a functor}

From now on, let us write $\CatCasr$ to denote the category of $\CatCa$-algebras of stable rank one. Also, we denote by $\CatoM$ the category of ordered monoids, in contrast to the category of positively ordered monoids, that we write $\CatPoM$. Finally, the category of monoids is denoted by $\CatM$.
We have just proved that $\Cu_1(A)$ is a semigroup satisfying the axioms (O1)-(O4). The aim is to define a functor $\Cu_1$ from the category $\CatCasr$ to a suitable category of semigroups as was done for the $\Cu$-semigroup; see \cite[Chapter 3]{APT14}, \cite{CEI08}. Since $\Cu_1(A)$ is usually not positively ordered, we need to adjust the definition of the codomain category. In the sequel, we show that $\Cu_1:\CatCasr\longrightarrow \Cu^\sim$ is a well-defined functor that is continuous. 

\begin{dfn}
\label{dfn:Cusimcat}
The \emph{unitary Cuntz category}, written $\Cu^\sim$ is the subcategory of $\CatoM$ whose objects are ordered monoids satisfying the axioms (O1)-(O4) and such that $0\ll 0$. Morphisms in $\Cu^\sim$ are $\CatoM$-morphisms that respect suprema of increasing sequences and the compact-containment relation.
\end{dfn}

\begin{dfn}
\label{positivecone}
Let $M\in\CatoM$ and let $S\in\Cu^\sim$. We define their \emph{positive cones}, that we write $M_+$ and $S_+$ respectively, as the subset of positive elements. Observe that $M_+\in\CatPoM$ and $S_+\in\Cu$.
\end{dfn}

\begin{lma}
\label{prop:nu+}
The category $\Cu$ (respectively $\CatPoM$) is a coreflective subcategory of $\Cu^\sim$ (respectively $\CatoM$). More precisely, the assignment $S\longrightarrow S_+$ defines a coreflector $\nu_+:\Cu^\sim\longrightarrow \Cu$.
\end{lma}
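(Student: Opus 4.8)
The plan is to exhibit $\nu_+$ as a right adjoint to the inclusion functor $\iota\colon\Cu\hookrightarrow\Cu^\sim$ (respectively $\iota\colon\CatPoM\hookrightarrow\CatoM$); a subcategory admitting such a right adjoint to its inclusion is by definition coreflective, and the right adjoint is the coreflector. I would do this by verifying, for each $S\in\Cu^\sim$, that the inclusion $\epsilon_S\colon S_+\hookrightarrow S$ of the positive cone has the universal property of a counit: namely that $\epsilon_S$ is itself a morphism of $\Cu^\sim$, and that every $\Cu^\sim$-morphism $f\colon\iota(T)\to S$ with $T\in\Cu$ factors \emph{uniquely} through $\epsilon_S$ by a $\Cu$-morphism $T\to S_+$. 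Supplying such arrows naturally in $S$ is equivalent to $\iota$ having a right adjoint $\nu_+$ with $\nu_+(S)=S_+$. The positively ordered case $\CatPoM\subseteq\CatoM$ is the same argument stripped of all order-theoretic axioms, and I would treat it first as a warm-up, since it isolates the purely algebraic content.

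The factorisation is essentially forced: if $T$ is positively ordered and $f\colon T\to S$ is a $\CatoM$-morphism, then $f(t)\ge f(0)=0$ for every $t\in T$, so $f$ takes values in $S_+$ and defines a unique set map $\bar f\colon T\to S_+$ with $\epsilon_S\circ\bar f=f$; on morphisms I set $\nu_+(\alpha):=\alpha|_{S_+}$, which lands in the target positive cone for the same reason. Thus the only real content is to compare the order-theoretic data of $S$ with that of $S_+$. Writing $\ll_S$ and $\ll_{S_+}$ for the respective compact-containment relations, and observing that $S_+$ is an \emph{up-set} of $S$ (being the set of elements above $0$), everything reduces to two comparison facts: (i) an increasing sequence in $S_+$ has the same supremum whether computed in $S$ or in $S_+$; and (ii) for $x,y\in S_+$ one has $x\ll_{S_+}y$ if and only if $x\ll_S y$. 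Granting (i) and (ii), the maps $\epsilon_S$, $\bar f$ and $\nu_+(\alpha)$ all preserve suprema of increasing sequences and the compact-containment relation, hence are morphisms in the relevant categories; naturality of $\epsilon$ and functoriality of $\nu_+$ are then immediate, as all maps involved are restrictions or corestrictions of the same underlying maps.

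Fact (i) is straightforward: if $(x_n)$ increases in $S_+$, its supremum $x$ in $S$ (which exists by (O1)) satisfies $x\ge x_1\ge 0$, hence lies in $S_+$, and any upper bound of $(x_n)$ in $S_+$ is a fortiori an upper bound in $S$ and so dominates $x$. The forward direction of (ii), $x\ll_S y\Rightarrow x\ll_{S_+}y$, is equally direct, since any increasing sequence in $S_+$ witnessing its failure already lives in $S$. \textbf{The main obstacle is the reverse direction of (ii)}, $x\ll_{S_+}y\Rightarrow x\ll_S y$: here one must test $x\ll_S y$ against \emph{arbitrary} increasing sequences $(z_n)$ in $S$ with $\sup_S z_n\ge y$, whose terms need not be positive, whereas the hypothesis $x\ll_{S_+}y$ only controls positive sequences. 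This is precisely where the defining axiom $0\ll 0$ of $\Cu^\sim$ (see \autoref{dfn:Cusimcat}) becomes indispensable: since $\sup_S z_n\ge y\ge 0$, applying $0\ll 0$ to the sequence $(z_n)$ produces an index $N$ with $z_N\ge 0$, and as $(z_n)$ is increasing the entire tail $(z_n)_{n\ge N}$ lies in the up-set $S_+$. This tail is an increasing sequence in $S_+$ whose supremum is still $\ge y$, so the hypothesis $x\ll_{S_+}y$ yields some $k\ge N$ with $z_k\ge x$, giving $x\ll_S y$. With (i) and (ii) established, the universal property of $\epsilon_S$ follows as above, the symmetric and simpler argument handles $\CatPoM\subseteq\CatoM$, and the adjunction $\iota\dashv\nu_+$ — that is, the coreflectivity — is proved.
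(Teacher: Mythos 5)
Your proposal is correct and follows the same route as the paper, which simply asserts the adjunction $\Hom_{\Cu^{\sim}}(i(S),T)\simeq \Hom_{\Cu}(S,\nu_+(T))$ and leaves the verification to the reader; you supply exactly that verification via the counit $S_+\hookrightarrow S$. Your identification of the axiom $0\ll 0$ as the ingredient making $x\ll_{S_+}y\Rightarrow x\ll_S y$ work (by truncating an arbitrary increasing sequence to a tail in the up-set $S_+$) is precisely the point the paper's "one can check" glosses over.
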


\begin{proof}
Since $\Cu^{\sim}$-morphisms respect $\leq$, we deduce that $\nu_+$ is a well-defined functor.
Moreover, one can check that $\Hom_{\Cu^{\sim}}(i(S),T)\simeq \Hom_{\Cu}(S,\nu_+(T))$ for any $S\in \Cu$ and $T\in\Cu^{\sim}$. 
We get that the inclusion functor $i:\Cu\hooklongrightarrow \Cu^\sim$ is left adjoint to $\nu_+$, which implies that $\Cu$ is a full (obviously faithful) coreflective subcategory of $\Cu^\sim$.
\end{proof}

\begin{prop}
\label{prop:cu1functor}
Let $\varphi: A \longrightarrow B$ a $^*$-homomorphism between $\CatCa$-algebras $A,B$ of stable rank one. We denote by $\varphi^\sim$ the unitized morphism between $(A\otimes\mathcal{K})^\sim\longrightarrow (B\otimes\mathcal{K})^\sim$. Then:
\vspace{-0,1cm}\[
	\begin{array}{ll}
		\Cu_{1}(\varphi): \Cu_1(A) \longrightarrow \Cu_1(B)\\
		\hspace{1,35cm} [(a,u)] \longmapsto [(\varphi(a),\varphi^{\sim}(u))]
	\end{array}
\vspace{-0,1cm}\] 
is a $\Cu^{\sim}$-morphism.
\end{prop}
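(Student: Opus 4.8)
The plan is to verify the four properties that make $\Cu_1(\varphi)$ a $\Cu^\sim$-morphism: (i) well-definedness on $\sim_1$-classes together with compatibility with the order $\leq$; (ii) additivity (monoid morphism); (iii) preservation of suprema of increasing sequences; and (iv) preservation of the compact-containment relation $\ll$. Throughout I will exploit the fact, established earlier, that each of these features of $\Cu_1$ is controlled entirely by the underlying $\Cu$-level data together with the $\K_1$-level maps $\chi_{ab}$, so that the work reduces to matching $\Cu(\varphi)$ with the functoriality of $\K_1$.

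\textbf{Well-definedness and order preservation.} First I would check that $\varphi^\sim$ really sends $\mathcal{U}(\her(a)^\sim)$ into $\mathcal{U}(\her(\varphi(a))^\sim)$: since $\varphi\otimes \id_{\KK}$ maps $\overline{a(A\otimes\KK)a}$ into $\overline{\varphi(a)(B\otimes\KK)\varphi(a)}$, the unitized map restricts appropriately, so the pair $(\varphi(a),\varphi^\sim(u))$ is a legitimate representative of an element of $\Cu_1(B)$. To see that $\sim_1$-classes are respected it suffices to show $(a,u)\lesssim_1(b,v)$ implies $(\varphi(a),\varphi^\sim(u))\lesssim_1(\varphi(b),\varphi^\sim(v))$; this simultaneously gives order preservation. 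The $\Cu$-component $\varphi(a)\lesssim_{\Cu}\varphi(b)$ is exactly functoriality of $\Cu$ (see \autoref{dfn:Cusg}). For the $\K_1$-component I must show $\chi_{\varphi(a)\varphi(b)}([\varphi^\sim(u)])=[\varphi^\sim(v)]$ knowing $\chi_{ab}([u])=[v]$. The key lemma here is naturality of the standard maps under $\varphi$: if $\alpha\in A^{**}$ realizes $p_a\lesssim_{PZ}p_b$ and $x=a^{1/2}\alpha$, then $\varphi^{**}(\alpha)$ realizes the corresponding subequivalence for $\varphi(a),\varphi(b)$, and one checks $\varphi^\sim\circ\theta_{ab,\alpha}^\sim=\theta_{\varphi(a)\varphi(b),\varphi^{**}(\alpha)}^\sim\circ\varphi^\sim$ on $\her(a)^\sim$. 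Passing to $\K_1$ and using \autoref{lma:invarK1inj} to discard the dependence on $\alpha$ then yields $\chi_{\varphi(a)\varphi(b)}\circ\K_1(\varphi^\sim)=\K_1(\varphi^\sim)\circ\chi_{ab}$, from which the claim is immediate.

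\textbf{Additivity and $0\mapsto 0$.} The neutral element $[(0_A,1_\C)]$ is sent to $[(0_B,1_\C)]$ by construction. For additivity, using the description of $+$ via the $^*$-isomorphism $\psi:M_2(A\otimes\KK)\simeq A\otimes\KK$ and its unitization, I would observe that $\varphi\otimes\id_{\KK}$ intertwines the chosen isometries $v_1,v_2$ (or, more cleanly, intertwines the two copies of $\psi$ for $A$ and $B$ up to an inner automorphism that does not affect $\sim_1$-classes), so that $\varphi(a\oplus b)$ and $\varphi(a)\oplus\varphi(b)$ are unitarily equivalent, and likewise at the unitary level $\varphi^\sim(u\oplus v)\sim_h\varphi^\sim(u)\oplus\varphi^\sim(v)$. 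Since $\sim_1$ identifies homotopic unitaries, $\Cu_1(\varphi)$ is additive.

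\textbf{Suprema and the $\ll$-relation.} These two are the real content, but both now follow cheaply from the characterizations already proved. For suprema: given an increasing sequence with supremum $[(a,u)]$, \autoref{lma:O1} gives $[a]=\sup_n[a_n]$ and $[u]=\chi_{a_na}([u_n])$ for large $n$. Applying $\Cu(\varphi)$ (which preserves suprema) and the naturality square $\chi_{\varphi(a_n)\varphi(a)}\circ\K_1(\varphi^\sim)=\K_1(\varphi^\sim)\circ\chi_{a_na}$ established above, the image sequence satisfies exactly the conditions of \autoref{lma:O1} for $[(\varphi(a),\varphi^\sim(u))]$ to be its supremum. For $\ll$: by \autoref{prop:llCU1}, $[(a,u)]\ll[(b,v)]$ is equivalent to $[a]\ll[b]$ in $\Cu(A)$ together with $\chi_{ab}([u])=[v]$; applying that $\Cu(\varphi)$ preserves $\ll$ and the same naturality square converts these into the two conditions characterizing $[(\varphi(a),\varphi^\sim(u))]\ll[(\varphi(b),\varphi^\sim(v))]$ in $\Cu_1(B)$.

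The main obstacle is the naturality square $\varphi^\sim\circ\theta_{ab}^\sim=\theta_{\varphi(a)\varphi(b)}^\sim\circ\varphi^\sim$ at the $\K_1$-level: one must verify that applying $\varphi^{**}$ to the partial isometry $\alpha$ produces a valid Peligrad--Zsid\'o realizer for the images (so that \autoref{prop:MvNpz} applies to $\varphi(x)=\varphi(a)^{1/2}\varphi^{**}(\alpha)\in B$) and then check the two standard maps agree up to the $\alpha$-independence guaranteed by \autoref{lma:invarK1inj}. Once this single square is in place, every remaining clause is a mechanical transfer through the already-proved order- and supremum-characterizations.
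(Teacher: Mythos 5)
Your proposal is correct and follows essentially the same route as the paper: the central step in both is the naturality square $\varphi^\sim\circ\theta_{ab,\alpha}^\sim=\theta_{\varphi(a)\varphi(b),\varphi^{**}(\alpha)}^\sim\circ\varphi^\sim$ obtained by pushing the partial isometry $\alpha$ through $\varphi^{**}$, combined with \autoref{prop:llCU1} for the $\ll$-relation. You merely spell out the additivity and suprema verifications that the paper leaves to the reader.
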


\begin{proof}
Let $a\in A\otimes\mathcal{K}$. The restriction $\varphi_{|\her(a)}:\her(a)\longrightarrow \her (\varphi(a))$ of $\varphi$ gives us the following commutative square:
\vspace{-0,5cm}\[
\xymatrix@C+1pc@R+0pc{
\her(a)\ar[d]_{}\ar[r]^{\hspace{-0,5cm} \varphi} & \her(\varphi(a))\ar[d]^{}  \\
\her(a)^\sim\ar[r]_{\hspace{-0,5cm} \varphi^{\sim}} & (\her (\varphi(a)))^\sim
} 
\]
Hence, $\varphi^{\sim}(u)$ is a unitary element of $(\her (\varphi(a)))^\sim$ and we deduce that $[(\varphi(a), \varphi^{\sim}(u))]\in \Cu_1(B)$. Let us check it does not depend on the representative $(a,u)$ chosen. Let $[(a,u)],[(b,v)]\in \Cu_1(A)$ such that $[(a,u)]\leq[(b,v)]$. Then we get $a\lesssim_{\Cu}b$ in $A\otimes\mathcal{K}$. Since $\varphi$ is a $^*$-homo\-morphism, we deduce that $\varphi(a)\lesssim_{\Cu} \varphi(b)$ in $B\otimes\mathcal{K}$. Further, if $\alpha$ is a partial isometry of $(A\otimes\mathcal{K})^{**}$  that realizes one of our standard morphisms $\theta_{ab,\alpha}$ (see \autoref{rmk:canoinj}) between $\her(a)$ and $\her(b)$, then $\varphi^{**}(\alpha)$ is a partial isometry of $(B\otimes\mathcal{K})^{**}$ that realizes $\theta_{\varphi(a)\varphi(b),\varphi^{**}(\alpha)}$ between $\her (\varphi(a))$ and $\varphi(b)$, since $\varphi^{**}$ is a $^*$-homomorphism. We get that the following diagram is commutative: 
\vspace{-0,45cm}\[
\vspace{-0cm}\hspace{1,5cm}\xymatrix@C+1pc@R+0pc{
\her(a)^\sim\ar[r]^{\theta_{ab,\alpha}}\ar[d]_{\varphi^{\sim}} & \her(b)^\sim\ar[d]^{\varphi^{\sim}}  \\
(\her (\varphi(a)))^\sim\ar[r]_{{\theta_{\varphi(a)\varphi(b),\varphi^{**}(\alpha)}}} & (\her (\varphi(b)))^\sim
} 
\vspace{-0,15cm}\]
from which we deduce that $\theta_{\varphi(a)\varphi(b)}(\varphi^{\sim}(u))\sim \varphi^{\sim}(v)$ and thus $[(\varphi(a), \varphi^{\sim}(u))]\leq[(\varphi(b), \varphi^{\sim}(v))]$. 
So $\Cu_{1}(\varphi)$ is indeed well-defined, respects $\leq$ and it is easy to check that $\Cu_1(\varphi)$ also respects addition. We conclude that $\Cu_{1}(\varphi)$ is a $\CatoM$-morphism. 
By \autoref{prop:llCU1}, $\Cu_{1}(\varphi)$ preserves the compact containment relation. Finally, we leave to the reader to check that $\Cu_{1}(\varphi)$ preserves suprema of increasing sequences.
\end{proof}

\begin{cor}
The assignment $A\longmapsto \Cu_1(A)$ from $\CatCasr$ to $\Cu^\sim$ is a functor.
\end{cor}

It has been shown that the functor $\Cu$ from the category of $\CatCa$-algebras to $\Cu$ is continuous (\cite[Corollary 3.2.9]{APT14}), generalizing the result of \cite[Theorem 2]{CEI08} that established sequential continuity.
We shall expect a similar result for the functor $\Cu_1$. In the sequel, we shall prove that $\Cu_1:\CatCasr\longrightarrow \Cu^\sim$ is a continuous functor, using a process analogous to that in \cite[Chapter 2 and 3]{APT14} and \cite[Section 2.2]{APT19} for the Cuntz semigroup. 

To do so, we are going to consider a pre-completed version of $\Cu_1$, that we will denote by $\W_1$, to then extend the result to $\Cu_1$ using Category Theory techniques. 
We first introduce an analogous category to $\CatW$ defined in \cite[Definition 2.5]{APT19} that we shall call $\CatW^{\sim}$.
The main difference of our context lies in the fact that binary/auxiliary relations considered need not be positive, and similarly, the underlying ordered monoids involved are not necessarily positively ordered. Still, most of the proofs from \cite{APT14} and \cite{APT19} remain valid. (We give additional details when needed.) 

\subsection{The category \texorpdfstring{$\CatW^\sim$}{W-tilde}}
\label{dfn:PreWW}
Let $S\in\CatM$ and consider a transitive binary relation $\prec$ on $S$. (Again, we do not require $\prec$ to be positive, in the sense that there may exist $s\in S$ such that $0\nprec s$.) For any $s\in S$ we denote $s_{\prec}:=\{s'\in S\mid s'\prec s\}$. Let us recall the $\W$-axioms from \cite[Definition 2.2]{APT19}:

(W1): For any $s\in S$, there exists a $\prec$-increasing sequence $(s_k)_k$ in $s_\prec$ such that for any $s'\in s_\prec$, there exists some $k$ such that $s'\prec s_k$. 

(W3): Addition and $\prec$ are compatible. 

(W4): For any $s,t,x\in S$ such that $x\prec s+t$, we can find $s',t'\in S$ such that $s'\prec s, t'\prec t$ and $x\prec s'+t'$.\\
A \emph{$\CatW^\sim$-semigroup} is a pair $(S,\prec)$, where $S\in\CatM$ and $\prec$ is a transitive binary relation (not necessarily positive) on $S$ such that $(S,\prec)$ satisfies axioms (W1)-(W3)-(W4) and such that $0\prec 0$.\\
A \emph{$\W^\sim$-morphism} between any two $S,T\in \CatW^\sim$ is a $\CatM$-morphism $g:S\longrightarrow T$ that respects the transitive binary relation and satisfies the following \emph{$\CatW^\sim$-continuity axiom}:

(M): For any $s\in S$ and $t\in T$ such that $t\prec g(s)$, there exists $s'\in s_\prec$ such that $t\prec g(s')$.

The category $\CatW^{\sim}$ has inductive limits. More precisely, let $(S_i,\varphi_{ij})_{i \in I}$ be an inductive system in $\CatW^{\sim}$ and let  $S:=\CatM-\lim\limits_{\longrightarrow}(S_i,\varphi_{ij})$. Then $(S,\prec)\simeq \CatW^\sim-\lim\limits_{\longrightarrow}(S_i,\varphi_{ij})$, where $\prec$ is the following transitive binary relation on $S$: $s\prec t$ in $S$ if $\varphi_{ik}(s_i) \prec \varphi_{jk}(t_j)$, where $s_i\in S_i$, $t_j\in S_j$ are representatives of $s,t$ respectively and $k\geq i,j$. 

Now that we have a well-defined categorical setup, we define a pre-completed version of $\Cu_1$ and show that it is continuous. More precisely, we build a functor from the category $C^{*}_{loc}$ of local $\CatCa$-algebras to the category $\CatW^{\sim}$, termed $\CatW_1$. 
See \cite[\S 2.2]{APT14} for more details.

\subsection{Local \texorpdfstring{$\CatCa$}{C*}-algebras}
\label{prg:localcat}
A \emph{local $\CatCa$-algebra} $A$ is an upward-directed union of $\CatCa$-algebras. That is, $A=\underset{i}\cup A_i$ where $\{A_i\}_i$ is a family of complete $^*$-invariant subalgebras such that for any $i,j$, there exists $k\geq i,j$ such that $A_i\cup A_j\subseteq A_k$. 

If $A$ is a local $\CatCa$-algebra, then so is $M_k(A)$ for any $k\in\N$. In fact, $M_k(A)$ sits as upper-left corner inside $M_{k'}(A)$ for any $k'\geq k$ and we can picture any $M_k(A)$ as a corner of $M_{\infty}(A):=\underset{k}\cup M_k(A)$, which is again a local $\CatCa$-algebra.
Observe that the completion of a local $\CatCa$-algebra $A$, that we write $\overline{A}$, is a $\CatCa$-algebra. In particular, we have $\overline{M_k(A)}\simeq M_k(\overline{A})$ for any $k\in\N$ and $\overline{M_{\infty}(A)}\simeq \overline{A}\otimes\mathcal{K}$. Further $A$ is closed under functional calculus.
Moreover, for any local $\CatCa$-algebra $A:=\underset{i}\cup A_i$, if each $A_i$ has stable rank one, then by \cite[Theorem 5.1]{R83}, we get that $\overline{A}$ has stable rank one. We may abuse the language and say that $A$ has stable rank one.

We now consider $\CatCa_{loc}$, the category whose objects are local $\CatCa$-algebras and morphisms are $^*$-homo\-morphisms. Obviously, $\CatCa$ is a full subcategory of $\CatCa_{loc}$. In fact, $\CatCa$ is a reflective subcategory of $\CatCa_{loc}$ and the assignment $A\longmapsto \overline{A}$ defines a reflector from $\CatCa_{loc}$ to $\CatCa$ that we denote by $\gamma$. As for $\CatCa$-algebras, we denote $\CatCa_{loc,sr1}$ the full subcategory of $\CatCa_{loc}$ consisting of local $\CatCa$-algebras whose completion have stable rank one.

Finally, let $(A_i,\varphi_{ij})_{i \in I}$ be an inductive system in $\CatCa_{loc}$. As in \cite[\S 2.2.8]{APT14}, we consider the algebraic inductive limit $A_{alg}:= \bigsqcup\limits_{i \in I}A_i/\!\!\sim$ with the pre-norm: $\Vert x\Vert:=\inf\limits_j\{\Vert \varphi_{ij}(x)\Vert \})$, for $x\in A_i$ and we define:
\vspace{-0,2cm}\[
\CatCa_{loc}-\lim\limits_{\longrightarrow}(A_i,\varphi_{ij}):=(A_{alg}/N,\Vert\,\Vert)
\]
where $N:=\{ a\in A_{alg} \mid \Vert a\Vert =0 \}$.
Besides, $\varphi_{ij}$ induces a $^*$-homomorphism that we also write $\varphi_{ij}: M_\infty(A_i)\longrightarrow M_\infty(A_j)$ and we have $\CatCa_{loc}-\lim\limits_{\longrightarrow}(M_\infty(A_i),\varphi_{ij}^{\sim})\simeq M_\infty(\CatCa_{loc}-\lim\limits_{\longrightarrow}(A_i,\varphi_{ij}))$. See \cite[\S 2.2.8]{APT14}.

\subsection{The precompleted unitary Cuntz semigroup}

We briefly recall the definition of the precompleted Cuntz semigroup $W(A)$ of a $\CatCa$-algebra $A$ and we refer the reader to \cite[\S 2.2]{APT14} for details. In fact, we give an equivalent definition that can be found in \cite[Remark 3.2.4]{APT14}; see also \cite[Lemma 3.2.7]{APT14}.

Let $A\in \CatCa_{loc}$. We define $\CatW(A):=\{[a]\in\Cu(\overline{A})\mid a\in M_{\infty}(A)_+\}$. Obviously, $(\CatW(A),+)\in\CatM$ as a submonoid of $\Cu(\overline{A})$. Given $[a], [b]\in\CatW(A)$, we write $[a]\prec[b]$ if $a\lesssim_{\Cu} (b-\epsilon)_{+}$ in  $M_{\infty}(A)_+$ for some $\epsilon>0$.
This defines a (positive) transitive binary relation on $\CatW(A)$, hence we have that $(\W(A),\prec)\in\CatW$. (See \cite[Proposition 2.2.5]{APT14} and \cite[Section 2.2]{APT19}.)

\begin{lma}
\label{lma:herloc}
Let $A\in\CatCa_{loc}$ and let $B:=\overline{A}$ be its completion in $\CatCa$. Then, for any $a\in A_+$ we have $\overline{aAa}=\overline{aBa}$.
\end{lma}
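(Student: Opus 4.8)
The plan is to recognize that this is purely a density statement: since $A\subseteq B$ with $A$ norm-dense in its completion $B=\overline{A}$, the hereditary subalgebras generated by $a$ in $A$ and in $B$ must coincide. I would prove the two inclusions separately. For $\overline{aAa}\subseteq\overline{aBa}$ there is nothing to do beyond remarking that $A\subseteq B$ forces $aAa\subseteq aBa$, and passing to closures preserves containment.

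The substantive inclusion is $\overline{aBa}\subseteq\overline{aAa}$, and here I would argue by approximation. Fix $x\in B$. Because $B$ is by definition the completion of the local $\CatCa$-algebra $A$, the subalgebra $A$ is norm-dense in $B$, so I can choose a sequence $(x_n)_n$ in $A$ with $x_n\to x$ in norm. Multiplying on both sides by the fixed element $a$ is norm-continuous, as the single estimate
\[
\Vert axa-ax_na\Vert=\Vert a(x-x_n)a\Vert\leq \Vert a\Vert^2\,\Vert x-x_n\Vert
\]
shows that $ax_na\to axa$ as $n\to\infty$. Since each $ax_na\in aAa$, we get $axa\in\overline{aAa}$; as $x\in B$ was arbitrary this yields $aBa\subseteq\overline{aAa}$, and taking closures gives $\overline{aBa}\subseteq\overline{aAa}$. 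Combining the two inclusions gives $\overline{aAa}=\overline{aBa}$.

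There is essentially no obstacle here; the only point worth flagging is that one must multiply on both sides simultaneously, but the displayed estimate handles that uniformly via $\Vert a\Vert^2$. Conceptually the lemma simply records that passing from the dense local subalgebra to its completion does not alter the hereditary subalgebra $\her(a)$ generated by a positive element, which is precisely what is needed to compare the unitary data living in $\her(a)^\sim$ computed inside $M_\infty(A)$ with that computed inside $\overline{A}\otimes\KK$.
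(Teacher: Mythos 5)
Your proof is correct and follows essentially the same route as the paper: both arguments rest on the density of $A$ in its completion $B$ together with the norm-continuity of $y\longmapsto aya$. Your organization (first showing $aBa\subseteq\overline{aAa}$, then taking closures) is marginally cleaner than the paper's double-limit formulation, but the underlying idea is identical.
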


\begin{proof}
The inclusion $\subseteq$ is trivial. Now let $x\in\overline{aBa}$. Then there exists a sequence $(b_k)_k$ in $B$ such that $x=\lim\limits_k ab_ka$. Furthermore, for any $k\in\N$, there exists a sequence $(a_{k,i})_i$ in $A$ such that $b_k=\lim\limits_i a_{k,i}$. We deduce that $x= \lim\limits_k a(\lim\limits_i a_{k,i} )a = \lim\limits_k\lim\limits_i (aa_{k,i}a)$. Thus $x\in\overline{aAa}$.
\end{proof}

\begin{dfn}
\label{dfn:herloc}
Let $A\in\CatCa_{loc}$ and let $B:=\overline{A}$ be its completion as a $\CatCa$-algebra. For $a\in A_+$, we define the \emph{hereditary subalgebra generated by $a$} as $\her(a):= \overline{aBa}$.
\end{dfn}

We have now all the tools to define a precompleted version of $\Cu_1$ that we will denote by $\CatW_1(A)$, as a submonoid of $\Cu_1(\overline{A})$.

\begin{dfn}
\label{dfn:auxtild}
Let $A\in \CatCa_{loc,sr1}$. We define $\CatW_1(A):=\{[(a,u)]\in\Cu_1(\overline{A})\mid a\in M_{\infty}(A)_+\}$. Obviously, $(\CatW_1(A),+)\in\CatM$ as a submonoid of $\Cu_1(\overline{A})$. We now equip $\CatW_1(A)$ with the following binary relation: for any two $[(a,u)], [(b,v)]$ in $\CatW_1(A)$, we say $[(a,u)]\prec [(b,v)]$ if: 
\[
\hspace{0,1cm}\left\{
    \begin{array}{ll}
    		\,a\lesssim_{\Cu} (b-\epsilon)_{+} \text{ in } M_{\infty}(A)_+ \text{ for some $\epsilon>0$}.\\
        \,[\theta_{ab}(u)] = [v] \text{ in } 
\K_1(\her(b)^\sim).
    \end{array}
\right.
\]
\end{dfn}

\begin{prop}
\label{cor:existenceliftloc}
Let $A\in\CatCa_{loc,sr1}$. Let $a\in A_+$ and let $(a_n)_n$ be a sequence in $A_+$ such that $([a_n])_n$ is a $\prec$-increasing cofinal sequence in $[a]_\prec$ (obtained from $(\CatW 1)$ applied to $[a]$.)

For any unitary element $u\in \her(a)^\sim$, there exists $n\in\N$ and a unitary element $u_n$ in $\her(a_n)^\sim$ such that $[(a_n,u_n)]\prec [(a,u)]$ in $\W_1(A)$.
\end{prop}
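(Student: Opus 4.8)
The plan is to mirror the proof of \autoref{cor:existencelift}, carrying the argument out inside the completion $B:=\overline{A}$, which has stable rank one. First I would replace the given cofinal sequence by the canonical one: set $b_m:=(a-1/m)_+$, which lies in $A_+$ since $A$ is closed under functional calculus (see \autoref{prg:localcat}). Then $([b_m])_m$ is $\prec$-increasing with $\sup_m[b_m]=[a]$ in $\Cu(B)$, and, crucially, here the hereditary subalgebras are genuinely nested, so that $\her(a)=\overline{\bigcup_m\her(b_m)}$ with $\her$ taken in $B$ (see \autoref{dfn:herloc}). Continuity of the functor $\K_1$ then yields the identification
\[
\AbGp-\lim\limits_{\longrightarrow}\bigl(\K_1(\her(b_m)),\chi_{b_mb_{m'}}\bigr)\simeq\bigl(\K_1(\her(a)),\chi_{b_ma}\bigr).
\]

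Given a unitary element $u\in\her(a)^\sim$, I would use this inductive limit to find, for some $m\in\N$, a class $[\hat{u}_m]\in\K_1(\her(b_m))$ with $\chi_{b_ma}([\hat{u}_m])=[u]$ in $\K_1(\her(a)^\sim)$. Since $\her(b_m)$ is a hereditary subalgebra of $B$, it has stable rank one, hence so does $\her(b_m)^\sim$; therefore $\K_1$-surjectivity (\cite[Theorem 2.10]{R87}) lets me realize $[\hat{u}_m]$ by an honest unitary element $\hat{u}_m\in\mathcal{U}(\her(b_m)^\sim)$.

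It then remains to transport $\hat{u}_m$ into some $\her(a_n)^\sim$. Observe that $[b_m]\prec[a]$ (indeed $(a-1/m)_+\lesssim_{\Cu}(a-\epsilon)_+$ for any $\epsilon<1/m$), so $[b_m]\in[a]_\prec$; by cofinality of $([a_n])_n$ in $[a]_\prec$ there is $n\in\N$ with $[b_m]\prec[a_n]$, in particular $b_m\lesssim_{\Cu}a_n$, which provides a standard map $\theta_{b_ma_n}$ (see \autoref{rmk:canoinj}). Setting $u_n:=\theta_{b_ma_n}(\hat{u}_m)\in\mathcal{U}(\her(a_n)^\sim)$ and using that standard maps compose, I obtain $\chi_{a_na}([u_n])=\chi_{a_na}\circ\chi_{b_ma_n}([\hat{u}_m])=\chi_{b_ma}([\hat{u}_m])=[u]$ in $\K_1(\her(a)^\sim)$. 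Combining this $\K_1$-condition with the hypothesis $[a_n]\prec[a]$ (which supplies the subequivalence $a_n\lesssim_{\Cu}(a-\epsilon)_+$ required by \autoref{dfn:auxtild}) gives exactly $[(a_n,u_n)]\prec[(a,u)]$ in $\W_1(A)$. I expect the main obstacle to be the careful verification of the $\K_1$ inductive-limit isomorphism in the local $\CatCa$-algebra setting, namely checking that $\her(a)=\overline{\bigcup_m\her(b_m)}$ with $\her$ taken in the completion and that $\K_1$ genuinely commutes with this limit; once that is in place, the remaining liftings are routine.
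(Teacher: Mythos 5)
Your proposal is correct and takes essentially the same route as the paper: the paper's own proof is a one-line reduction ("combine the fact that $\overline{A}$ has stable rank one with \autoref{dfn:herloc}, and the result follows from \autoref{cor:existencelift}"), and your argument simply unpacks that reduction, reproducing the $(a-1/m)_+$ construction, the $\K_1$ inductive-limit identification, $\K_1$-surjectivity in stable rank one, and the cofinality step from the proof of \autoref{cor:existencelift}, while correctly noting the only genuinely new points in the local setting (that $(a-1/m)_+$ stays in $A_+$ by closure under functional calculus, and that the $\Cu$-half of $\prec$ is supplied by the hypothesis $[a_n]\prec[a]$). No gaps.
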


\begin{proof}
Combine the fact that $\overline{A}$ has stable rank one, with \autoref{dfn:herloc} and the result follows from 
\autoref{cor:existencelift}.
\end{proof}

\begin{prop}(cf \cite[Proposition 2.2.5]{APT14}).
Let $A\in\CatCa_{loc,sr1}$. The relation defined in \autoref{dfn:auxtild} is a transitive binary relation and $(\CatW_1(A),\prec)$ satisfies axioms ($\CatW 1$), ($\CatW 3$) and ($\CatW 4$). That is, $(\CatW_1(A),\prec) \in\CatW^\sim$. We may omit the reference to $\prec$ and simply write $\CatW_1(A)\in\CatW^\sim$.
\end{prop}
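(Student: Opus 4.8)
The plan is to verify each of the four axioms ($\CatW 1$), ($\CatW 3$), ($\CatW 4$), transitivity, and the degenerate condition $0\prec 0$ for the relation defined in \autoref{dfn:auxtild}, leaning heavily on the already-established properties of $\Cu_1(\overline{A})$ and on the fact that the first component of the relation is exactly the precompleted Cuntz relation $\prec$ on $\CatW(\overline{A})$, for which axioms ($\CatW 1$)--($\CatW 4$) are known by \cite[Proposition 2.2.5]{APT14}. The strategy throughout is to decompose each statement into its ``positive-element part'' (which is handled verbatim by the classical $\CatW$ theory applied to $\Cu(\overline{A})$) and its ``$\K_1$-part'' (which is handled by the standard-map formalism and the transitivity of $\lesssim_1$ established earlier).

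First I would check \emph{transitivity} and $0\prec 0$. For transitivity, suppose $[(a,u)]\prec[(b,v)]\prec[(c,w)]$; the positive parts give $a\lesssim_{\Cu}(b-\epsilon)_+$ and $b\lesssim_{\Cu}(c-\delta)_+$ in $M_\infty(A)_+$, and a standard argument (shrinking $\delta$ slightly and using that $(b-\epsilon)_+\lesssim_{\Cu}b$) produces some $\eta>0$ with $a\lesssim_{\Cu}(c-\eta)_+$, exactly as in the scalar $\CatW$ case. The $\K_1$-parts compose via the standard maps: $\chi_{ac}([u])=\chi_{bc}\circ\chi_{ab}([u])=\chi_{bc}([v])=[w]$, using the functoriality of $\chi$ recorded after \autoref{lma:invarK1inj} and in \autoref{rmk:canoinj}. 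The condition $0\prec 0$ holds because $0\lesssim_{\Cu}(0-\epsilon)_+=0$ trivially and the only unitary on $\her(0)^\sim=\mathbb{C}$ is a scalar, so both components are satisfied.

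Next I would treat ($\CatW 1$), which is the substantive axiom and the one I expect to be the main obstacle. Given $[(a,u)]\in\CatW_1(A)$, its positive part $[a]\in\CatW(\overline{A})$ admits, by ($\CatW 1$) for $\CatW(\overline{A})$, a $\prec$-increasing cofinal sequence $([a_n])_n$ in $[a]_\prec$ with representatives $a_n\in M_\infty(A)_+$. This is precisely the hypothesis of \autoref{cor:existenceliftloc}, which yields some index $n$ and a unitary $u_n\in\her(a_n)^\sim$ with $[(a_n,u_n)]\prec[(a,u)]$. I would then set $s_k:=[(a_{n+k},\theta_{a_na_{n+k}}(u_n))]$ and argue, exactly as in the proof of (O2) in \autoref{prop:cuntzaxiom}, that $(s_k)_k$ is $\prec$-increasing in $[(a,u)]_\prec$. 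The cofinality requirement is the delicate point: given any $[(a',u')]\prec[(a,u)]$, the positive cofinality in $\CatW(\overline{A})$ furnishes $k$ with $[a']\prec[a_{n+k}]$, and then transitivity of $\lesssim_1$ together with the commuting triangle of $\chi$-maps forces the $\K_1$-class to match, giving $[(a',u')]\prec s_k$. Here the work lies in checking that the lifted unitary $u_n$, transported along the $\theta$-maps, has $\K_1$-class compatible with $u'$; this is guaranteed because all relevant $\K_1$-groups assemble into the inductive limit $\K_1(\her(a))$ via the $\chi_{a_na}$, so any class below $[u]$ already appears at a finite stage.

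Finally, ($\CatW 3$) and ($\CatW 4$) follow by combining the scalar $\CatW$-axioms with componentwise bookkeeping of unitaries. For ($\CatW 3$), compatibility of addition with $\prec$ reduces to the positive part (handled by ($\CatW 3$) in $\CatW(\overline{A})$) and to the additivity $\chi_{(a_1\oplus a_2)(b_1\oplus b_2)}(u_1\oplus u_2)=\chi_{a_1b_1}(u_1)\oplus\chi_{a_2b_2}(u_2)$, which follows from the block-diagonal form of the standard maps. For ($\CatW 4$), given $[(x,w)]\prec[(a,u)]+[(b,v)]=[(a\oplus b,u\oplus v)]$, the positive part ($\CatW 4$) in $\CatW(\overline{A})$ splits $[x]\prec[a']+[b']$ with $[a']\prec[a]$, $[b']\prec[b]$; one then lifts the accompanying unitary $w$ through the resulting decomposition, using $\K_1$-surjectivity (as in \autoref{cor:existencelift}) in the stable-rank-one hereditary subalgebras $\her(a')^\sim$ and $\her(b')^\sim$ to produce unitaries $w_{a'}$, $w_{b'}$ whose direct sum realizes $w$ up to homotopy, so that $[(x,w)]\prec[(a',w_{a'})]+[(b',w_{b'})]$. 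Throughout, the guiding principle is that each axiom is the ``product'' of its already-known positive-element version with a routine $\K_1$-tracking argument made legitimate by \autoref{lma:invarK1inj} and the inductive-limit description of the $\K_1$-groups.
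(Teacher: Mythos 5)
Your overall strategy --- splitting each axiom into its positive-element part, delegated to the scalar $\CatW$-theory for $\CatW(\overline{A})$, and a $\K_1$-tracking part handled by the standard maps --- is exactly the paper's, and your treatment of transitivity, $0\prec 0$, (W3) and (W4) is sound (the paper simply declares (W3) and (W4) routine, and for transitivity it cites \cite[Proposition 2.4]{R92} where you say ``shrinking $\delta$ slightly''). The one genuine divergence is (W1). The paper gets it almost for free: by \autoref{prop:llCU1}, $[(b,v)]\prec[(a,u)]$ implies $[(b,v)]\ll[(a,u)]$ in $\Cu_1(\overline{A})$, and since $[(a,u)]=\sup_n[((a-1/n)_+,u_n)]$ this immediately yields $[(b,v)]\prec[((a-1/n)_+,u_n)]$ for some $n$, so the single sequence $([((a-1/n)_+,u_n)])_n$ witnesses (W1). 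You instead rebuild a cofinal sequence by hand from \autoref{cor:existenceliftloc}, which also works but forces you to verify cofinality directly, and there your argument is slightly off as stated: if $[a']\prec[a_{n+k}]$, the two classes $\chi_{a'a_{n+k}}([u'])$ and $[\theta_{a_na_{n+k}}(u_n)]$ are only known to agree after applying $\chi_{a_{n+k}a}$, not in $\K_1(\her(a_{n+k}))$ itself, since $\chi_{a_{n+k}a}$ need not be injective; so $[(a',u')]\prec s_k$ may fail for that particular $k$. One must pass to a later index using the algebraic inductive limit $\K_1(\her(a))\simeq\varinjlim\K_1(\her(a_m))$ (which, for an arbitrary cofinal $\prec$-increasing sequence, requires interleaving with $((a-1/m)_+)_m$, the only case treated in \autoref{cor:existencelift}). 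You gesture at this with ``any class below $[u]$ already appears at a finite stage,'' but the fact actually needed is that two classes equal in the limit become equal at a finite stage; with that correction your proof closes, and the paper's detour through \autoref{prop:llCU1} is precisely what lets it avoid this bookkeeping.
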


\begin{proof}
Let us check that $\prec$ is transitive. If $[(a,u)]\prec [(b,v)]\prec[(c,w)]$, then we have $\chi_{ac}([u])=[w]$ and we also know that $a\lesssim_{\Cu} (b-\epsilon)_+$. Using \cite[Proposition 2.4]{R92}, we can find some $\delta>0$ such that  $a\lesssim_{\Cu} (c-\delta)_+$, since $a\lesssim_{\Cu}b\lesssim_{\Cu}(c-\epsilon')_+$ for some $\epsilon'>0$. We conclude that $[(a,u)]\prec[(c,w)]$.

If $[(b,v)]\prec [(a,u)]$, then, by \autoref{prop:llCU1}, we have $[(b,v)]\ll [(a,u)]$ in $\Cu_1(A)$ and thus $[(b,v)]\prec [((a-1/n)+,u_n)]$ for some $n\in\N$. Hence (W1) holds. To check (W3) and (W4) is routine.
\end{proof}

\begin{prop}
\label{prop:w1phi}
Let $\varphi: A\longrightarrow B$ be a $^*$-homomorphism between $A,B\in\CatCa_{loc,sr1}$, and denote by $\varphi$ its extension to $M_{\infty}(A)$. We write $\overline{\varphi}:=\gamma(\varphi)$ and $\overline{\varphi}^\sim:\overline{M_{\infty}(A)}^\sim\longrightarrow \overline{M_{\infty}(B)}^\sim$ its unitization. Then the map:
\[
	\begin{array}{ll}
			\CatW_{1}(\varphi): \CatW_1(A) \longrightarrow \CatW_1(B)\\
			\hspace{1,2cm} [(a,u)] \longmapsto [(\varphi(a),\overline{\varphi}^{\sim}(u))]
	\end{array}
\]
is a $\CatW^{\sim}$-morphism. 
\end{prop}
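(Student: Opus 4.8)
The plan is to verify the three defining conditions of a $\CatW^{\sim}$-morphism, namely that $\CatW_1(\varphi)$ is well-defined as a map into $\CatW_1(B)$, that it is a $\CatM$-morphism respecting the transitive relation $\prec$, and finally that it satisfies the continuity axiom (M). The well-definedness and monoid-morphism structure will run almost verbatim along the lines of \autoref{prop:cu1functor}: for $a\in M_\infty(A)_+$, the restriction $\varphi_{|\her(a)}:\her(a)\longrightarrow\her(\varphi(a))$ makes $\overline{\varphi}^\sim(u)$ a unitary of $\her(\varphi(a))^\sim$, so $[(\varphi(a),\overline{\varphi}^\sim(u))]$ lands in $\Cu_1(\overline B)$ with positive part supported on $M_\infty(B)_+$, hence in $\CatW_1(B)$. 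Independence of the representative and compatibility with addition transfer directly from the corresponding arguments in \autoref{prop:cu1functor}, the only difference being that $\her(\cdot)$ now refers to \autoref{dfn:herloc} and the commuting-square argument with partial isometries is carried out inside $(\overline{M_\infty(A)})^{**}$ and $(\overline{M_\infty(B)})^{**}$.

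First I would show that $\CatW_1(\varphi)$ respects $\prec$. Suppose $[(a,u)]\prec[(b,v)]$ in $\CatW_1(A)$, so that $a\lesssim_{\Cu}(b-\epsilon)_+$ in $M_\infty(A)_+$ for some $\epsilon>0$ and $[\theta_{ab}(u)]=[v]$ in $\K_1(\her(b)^\sim)$. Applying the $^*$-homomorphism $\varphi$ preserves Cuntz subequivalence, and since $\varphi$ commutes with the continuous functional calculus we have $\varphi((b-\epsilon)_+)=(\varphi(b)-\epsilon)_+$; hence $\varphi(a)\lesssim_{\Cu}(\varphi(b)-\epsilon)_+$ in $M_\infty(B)_+$. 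For the $\K_1$ condition, I would again invoke the commutative diagram of \autoref{prop:cu1functor}, now reading $\theta_{\varphi(a)\varphi(b)}\circ\overline{\varphi}^\sim=\overline{\varphi}^\sim\circ\theta_{ab}$ at the level of unitaries and passing to $\K_1$-classes, so that $[\theta_{\varphi(a)\varphi(b)}(\overline{\varphi}^\sim(u))]=[\overline{\varphi}^\sim(v)]$. These two facts together give $\CatW_1(\varphi)([(a,u)])\prec\CatW_1(\varphi)([(b,v)])$.

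The main obstacle, and the genuinely new content, is the continuity axiom (M): given $[(a,u)]\in\CatW_1(A)$ and $[(c,w)]\in\CatW_1(B)$ with $[(c,w)]\prec\CatW_1(\varphi)([(a,u)])=[(\varphi(a),\overline{\varphi}^\sim(u))]$, I must produce $[(a',u')]\in[(a,u)]_\prec$ with $[(c,w)]\prec\CatW_1(\varphi)([(a',u')])$. The strategy is to first handle the positive data using that $\W$ already satisfies (M) for the ordinary $\W$-functor (cf.\ \cite[Section 2.2]{APT19}): from $c\lesssim_{\Cu}(\varphi(a)-\epsilon)_+$ one finds $a'\in M_\infty(A)_+$ with $[a']\prec[a]$ and $c\lesssim_{\Cu}(\varphi(a')-\epsilon')_+$ for some $\epsilon'>0$. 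The subtle part is lifting $w$ compatibly: I would choose a cofinal $\prec$-increasing sequence $([a_n])_n$ in $[a]_\prec$ as in \autoref{cor:existenceliftloc}, use the $\AbGp$ inductive-limit description of $\K_1(\her(a))$ as the limit of the $\K_1(\her(a_n))$, and track $[u]$ back to a class $[u_n]\in\K_1(\her(a_n))$ realized by a unitary $u_n$ (using stable rank one and $\K_1$-surjectivity exactly as in \autoref{cor:existencelift}). Since $\overline{\varphi}^\sim\circ\theta_{a_na}=\theta_{\varphi(a_n)\varphi(a)}\circ\overline{\varphi}^\sim$ at the $\K_1$-level, the class $[w]$, which equals $\chi_{\varphi(a)\varphi(\cdot)}$-image data along this chain, will be matched by $\chi$ applied to $[\overline{\varphi}^\sim(u_n)]$ for $n$ large enough; enlarging $n$ so that the index simultaneously satisfies the positive inequality $c\lesssim_{\Cu}(\varphi(a_n)-\epsilon')_+$ yields the required $[(a',u')]:=[(a_n,u_n)]$. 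I expect the bookkeeping of these two indices — the one forced by the Cuntz comparison and the one forced by the $\K_1$ matching in the abelian-group colimit — to be where the argument needs the most care, and this is precisely the place where one leans on the transitivity of $\lesssim_1$ and the commutativity diagrams established earlier.
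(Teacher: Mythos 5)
Your proposal follows essentially the same route as the paper: well-definedness and compatibility with $\prec$ are carried over from \autoref{prop:cu1functor}, and axiom (M) is verified by lifting $u$ along the cutdowns $(a-1/n)_+$ via \autoref{cor:existenceliftloc} and then matching the positive and $\K_1$ data at a sufficiently large common index. The one point to make explicit is that the colimit justifying ``for $n$ large enough'' in the $\K_1$ matching is the one on the codomain side, $\K_1(\her(\varphi(a)))\simeq \AbGp-\lim\limits_{\longrightarrow}\K_1(\her((\varphi(a)-1/m)_+))$: the two relevant classes agree after mapping into $\K_1(\her(\varphi(a)))$, and algebraicity of colimits in $\AbGp$ descends this equality to a finite stage $l$, which is exactly how the paper closes the argument.
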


\begin{proof}
Using the same argument as in \autoref{prop:cu1functor}, we easily deduce that $\CatW_{1}(\varphi)$ is a $\CatM$-morphism that respects $\prec$.
Further, we have to check that $\CatW_1(\varphi)$ satisfies the $\CatW^\sim$-continuity axiom (see \autoref{dfn:PreWW}). Let us write $f:=\CatW_{1}(\varphi)$. Let $x:=[(a,u)]\in \CatW_1(A)$ and $y:=[(b,v)]\in \CatW_1(B)$ such that $y\prec f(x)$. We have to find $x'\in\W_1(A)$ such that $x'\prec x$ and $y\prec f(x')$.

Observe that $([(a-1/n)_+])_n$ is one of the $\prec$-increasing sequences obtained from axiom $(\CatW1)$ applied to $[a]$ in $\W(A)$. Thus, by \autoref{cor:existenceliftloc}, we can find some $n\in\N$ and a unitary element $u_n\in\her((a-1/n)_+)^\sim$ such that $[((a-1/n)_+,u_n)]\prec [(a,u)]$ in $\CatW_1(A)$. Similarly, $([(\varphi(a)-1/k)_+])_k$ is one of the $\prec$-increasing sequences obtained from $(\CatW1)$ applied to $[\varphi(a)]$ in $\W(B)$. Therefore, there exists $k\in\N$ such that $[b]\prec [(\varphi(a)-1/k)_+]$ in $\W(B)$. We deduce that there exists $m\in\N$ large enough ($m\geq k,n)$ such that:
\[
\left\{
    \begin{array}{ll}
    		\,[b]\prec [(\varphi(a)-1/m)_+] \text{ in } \CatW(B).\\
        \,[\theta_{b\varphi(a)}(v)] = [\theta_{(\varphi(a)-1/n)_+\varphi(a)}(\overline{\varphi}^{\sim} (u_n))] \text{ in } 
\K_1(\her (\varphi(a))).
    \end{array}
\right.
\]
By transitivity of $\lesssim_1$, we obtain: 
\[
[\theta_{(\varphi(a)-1/m)_+\varphi(a)}\circ \theta_{b(\varphi(a)-1/m)_+}(v)] = [\theta_{(\varphi(a)-1/m)_+\varphi(a)}\circ\theta_{(\varphi(a)-1/n)_+ (\varphi(a)-1/m)_+}(\overline{\varphi}^{\sim} (u_n))] \text{ in } 
\K_1(\her (\varphi(a))).
\]
Finally, since $\AbGp-\lim\limits_{\longrightarrow}(\K_1(\her (\varphi(a)-1/m)_+),\chi_{(\varphi(a)-1/n)_+(\varphi(a)-1/m)_+})\simeq (\K_1(\her(a)),\chi_{(\varphi(a)-1/m)_+\varphi(a)})$, we conclude that there exists $l\geq m$ such that:
\[
\left\{
    \begin{array}{ll}
    		\,[b]\prec [(\varphi(a)-1/l)_+] \text{ in } \CatW(B).\\
        \,[\theta_{b(\varphi(a)-1/l)_+}(v)] = [\theta_{(\varphi(a)-1/n)_+ (\varphi(a)-1/l)_+}(\overline{\varphi}^{\sim}(u_n))] \text{ in } 
\K_1(\her (\varphi(a)-1/l)_+).
    \end{array}
\right.
\]
Write $x':=[((a-1/l)_+,\theta_{(a-1/n)_+(a-1/l)_+}(u_n))]$. Then we already know that $x'\prec x$ in $\W_1(A)$ and the above exactly states that $y\prec f(x')$ in $\CatW_1(B)$.
\end{proof}

\begin{cor}
The assignment $A\longmapsto \W_1(A)$ from $\CatCa_{loc,sr1}$ to $\W^\sim$ is a functor.
\end{cor}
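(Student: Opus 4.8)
The plan is to verify the three defining conditions of a functor into the category $\CatW^{\sim}$: that $\W_1$ sends objects of $\CatCa_{loc,sr1}$ to objects of $\CatW^{\sim}$, that it sends morphisms to $\CatW^{\sim}$-morphisms, and that it respects identities and composition. The first two ingredients are already in hand: the previous proposition establishes that $(\CatW_1(A),\prec)\in\CatW^{\sim}$ for every $A\in\CatCa_{loc,sr1}$, and \autoref{prop:w1phi} shows that each $^*$-homomorphism $\varphi\colon A\longrightarrow B$ induces a $\CatW^{\sim}$-morphism $\CatW_1(\varphi)$. So the remaining content of the corollary is entirely the functoriality on morphisms.

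First I would check functoriality on identities. For the identity $\id_A\colon A\longrightarrow A$, the induced reflector map $\overline{\id_A}=\id_{\overline{A}}$ and its unitization is the identity on $\overline{M_\infty(A)}^\sim$, so $\CatW_1(\id_A)$ sends $[(a,u)]$ to $[(a,u)]$; thus $\CatW_1(\id_A)=\id_{\CatW_1(A)}$.

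Next I would check compatibility with composition. Given $\varphi\colon A\longrightarrow B$ and $\psi\colon B\longrightarrow C$ in $\CatCa_{loc,sr1}$, I want $\CatW_1(\psi\circ\varphi)=\CatW_1(\psi)\circ\CatW_1(\varphi)$. On positive elements this is immediate since $(\psi\circ\varphi)(a)=\psi(\varphi(a))$, matching the behavior of the Cuntz semigroup. On the unitary component one must observe that the reflector $\gamma$ is functorial, so $\overline{\psi\circ\varphi}=\overline{\psi}\circ\overline{\varphi}$, and that unitization is functorial, giving $\overline{\psi\circ\varphi}^{\sim}=\overline{\psi}^{\sim}\circ\overline{\varphi}^{\sim}$. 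Evaluating both sides of the claimed identity on a representative $[(a,u)]$ then yields $[((\psi\circ\varphi)(a),\overline{\psi\circ\varphi}^{\sim}(u))]=[(\psi(\varphi(a)),\overline{\psi}^{\sim}(\overline{\varphi}^{\sim}(u)))]$, which is exactly the image under the composite $\CatW_1(\psi)\circ\CatW_1(\varphi)$.

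I do not expect a genuine obstacle here, as the essential verifications were discharged in the two preceding propositions; the only point requiring a moment of care is the functoriality of the reflector $\gamma$ and of the unitization operation, which is what makes the two components of $\CatW_1$ compose correctly. Consequently the statement follows directly, and one may simply record that the claim is an immediate consequence of \autoref{prop:w1phi} together with the functoriality of $\gamma$ and of unitization.
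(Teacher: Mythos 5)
Your proof is correct and matches what the paper intends: the paper states this corollary without proof, treating it as an immediate consequence of the preceding two propositions, and your verification of identities and composition (via functoriality of the reflector $\gamma$ and of unitization) is exactly the routine check being left to the reader.
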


\begin{thm}
The functor $\CatW_1:C^*_{loc,sr1}\longrightarrow \CatW^{\sim}$ is continuous.
\end{thm}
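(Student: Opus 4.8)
The plan is to prove continuity of $\CatW_1$ by reducing it to the already-established continuity of the precompleted Cuntz functor $\W$ together with the continuity of $\K_1$ on inductive limits. Concretely, given an inductive system $(A_i,\varphi_{ij})_{i\in I}$ in $\CatCa_{loc,sr1}$ with limit $A:=\CatCa_{loc}-\lim\limits_{\longrightarrow}(A_i,\varphi_{ij})$, I would first recall from the construction of limits in $\CatW^\sim$ (see \autoref{dfn:PreWW}) that $\CatW^\sim-\lim\limits_{\longrightarrow}(\W_1(A_i),\W_1(\varphi_{ij}))$ is, as a monoid, the algebraic limit $\CatM-\lim\limits_{\longrightarrow}(\W_1(A_i),\W_1(\varphi_{ij}))$ equipped with the transitive relation inherited from the system. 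The goal is then to exhibit a natural $\CatM$-isomorphism $\Phi$ between this algebraic limit and $\W_1(A)$ that respects $\prec$ in both directions and whose verification amounts to tracking, coordinate-wise, the positive-element data (which is handled by $\W$) and the $\K_1$-class data (which is handled by the continuity of $\K_1$ on hereditary subalgebras).

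The key steps, in order, are as follows. First, I would define the comparison map $\Phi:\CatM-\lim\limits_{\longrightarrow}(\W_1(A_i),\W_1(\varphi_{ij}))\longrightarrow \W_1(A)$ on representatives by sending the class of $[(a,u)]$ with $a\in M_\infty(A_i)_+$ to $[(\varphi_{i\infty}(a),\overline{\varphi_{i\infty}}^\sim(u))]$, where $\varphi_{i\infty}$ denotes the canonical map into the limit; \autoref{prop:w1phi} guarantees each $\W_1(\varphi_{ij})$ is a $\CatW^\sim$-morphism, so $\Phi$ is a well-defined $\CatM$-morphism respecting $\prec$. Second, for \emph{surjectivity} I would use that $M_\infty(A)=\underset{i}\cup M_\infty(A_i)$, so every positive $a\in M_\infty(A)_+$ and every unitary $u\in\her(a)^\sim$ can be approximated and, via the local structure and $\K_1$-surjectivity under stable rank one (\cite[Theorem 2.10]{R87}) already exploited in \autoref{cor:existencelift}, lifted back to some finite stage $A_i$; this is exactly where the argument of \autoref{cor:existenceliftloc} and \autoref{prop:w1phi} supplies the needed unitary lift. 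Third, for \emph{order-injectivity} (i.e. $\Phi(x)\prec\Phi(y)$ implies $x\prec y$), I would combine the fact that $\W$ detects the positive-element relation $a\lesssim_{\Cu}(b-\epsilon)_+$ at a finite stage (continuity of $\W$, \cite[Chapter 2]{APT14}) with the fact that $\K_1$ of an inductive limit is the limit of the $\K_1$-groups, so the equality $[\theta_{ab}(u)]=[v]$ in $\K_1(\her(b)^\sim)$ already holds at some finite stage. The diagram-chasing here mirrors the commutative triangles used in \autoref{lma:O1} and \autoref{prop:llCU1}.

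The main obstacle I anticipate is the faithful interaction between the two pieces of data at a \emph{single common stage}: the positive-element comparison and the $\K_1$-equality must be realized simultaneously at the same index $k$, and the standard maps $\theta_{ab}$ must be compatible along the system. This is the point where one must carefully invoke that $\AbGp-\lim\limits_{\longrightarrow}\K_1(\her(\,\cdot\,))$ computes $\K_1$ of the limit hereditary subalgebra, and that the standard maps $\theta_{ab}$ are functorial with respect to the connecting $^*$-homomorphisms (established in the commutative square of \autoref{prop:cu1functor}). Once both data are forced to the same stage, the transitivity of $\lesssim_1$ closes the argument. I would therefore structure the proof so that the $\K_1$-continuity is isolated as the only genuinely new input, with everything else delegated to the corresponding statements for $\W$.

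\begin{proof}
Since $\CatW_1(A)$ sits inside $\Cu_1(\overline{A})$ as the submonoid of classes with positive part in $M_\infty(A)_+$, and since the relation $\prec$ on $\CatW_1(A)$ records a Cuntz comparison of positive elements (handled by the precompleted Cuntz functor $\W$) together with a $\K_1$-equality of standard maps, the continuity of $\CatW_1$ follows by combining the continuity of $\W$ with the continuity of $\K_1$. Let $(A_i,\varphi_{ij})_{i\in I}$ be an inductive system in $\CatCa_{loc,sr1}$ and put $A:=\CatCa_{loc}-\lim\limits_{\longrightarrow}(A_i,\varphi_{ij})$. By the description of inductive limits in $\CatW^\sim$ (see \autoref{dfn:PreWW}), the limit $\CatW^\sim-\lim\limits_{\longrightarrow}(\W_1(A_i),\W_1(\varphi_{ij}))$ is the monoid $\CatM-\lim\limits_{\longrightarrow}(\W_1(A_i),\W_1(\varphi_{ij}))$ endowed with the induced relation $\prec$. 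By \autoref{prop:w1phi}, each $\W_1(\varphi_{ij})$ is a $\CatW^\sim$-morphism, so the canonical maps into $\W_1(A)$ induce a $\CatM$-morphism $\Phi$ from this limit to $\W_1(A)$ that respects $\prec$, sending the class of $[(a,u)]$ (with $a\in M_\infty(A_i)_+$) to $[(\varphi_{i\infty}(a),\overline{\varphi_{i\infty}}^\sim(u))]$.

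We first check that $\Phi$ is surjective. Let $[(a,u)]\in\W_1(A)$ with $a\in M_\infty(A)_+=\underset{i}\cup M_\infty(A_i)_+$. Since $\W$ is continuous, there exists some stage $j$ and $a_j\in M_\infty(A_j)_+$ with $[\varphi_{j\infty}(a_j)]=[a]$ in $\Cu(\overline{A})$. As $\overline{A}$ has stable rank one, so does $\her(a)^\sim$, and by $\K_1$-surjectivity (\cite[Theorem 2.10]{R87}) combined with the fact that $\AbGp-\lim\limits_{\longrightarrow}(\K_1(\her(\varphi_{j\infty}(a_j)))) \simeq \K_1(\her(a))$, we may lift $[u]$ to a unitary $u_j\in\her(a_j)^\sim$ at some stage, exactly as in \autoref{cor:existenceliftloc}. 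Then $\Phi$ of the class of $[(a_j,u_j)]$ equals $[(a,u)]$, proving surjectivity.

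Finally, we verify order-injectivity: if $\Phi(x)\prec\Phi(y)$, then $x\prec y$. Represent $x,y$ by $[(a,u)],[(b,v)]$ with $a,b\in M_\infty(A_i)_+$ for a common $i$. The relation $\Phi(x)\prec\Phi(y)$ unfolds into $\varphi_{i\infty}(a)\lesssim_{\Cu}(\varphi_{i\infty}(b)-\epsilon)_+$ together with $[\theta_{\varphi_{i\infty}(a)\varphi_{i\infty}(b)}(\overline{\varphi_{i\infty}}^\sim(u))]=[\overline{\varphi_{i\infty}}^\sim(v)]$. By continuity of $\W$, the first condition already holds at some finite stage $k\geq i$, that is, $\varphi_{ik}(a)\lesssim_{\Cu}(\varphi_{ik}(b)-\epsilon')_+$ in $M_\infty(A_k)_+$ for some $\epsilon'>0$. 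By functoriality of the standard maps (the commutative square of \autoref{prop:cu1functor}) and the fact that $\K_1$ of the limit is the $\AbGp$-limit of the $\K_1$-groups, the $\K_1$-equality descends to a finite stage $l\geq k$, so that $[\theta_{\varphi_{il}(a)\varphi_{il}(b)}(\overline{\varphi_{il}}^\sim(u))]=[\overline{\varphi_{il}}^\sim(v)]$ in $\K_1(\her(\varphi_{il}(b))^\sim)$. Hence $\W_1(\varphi_{il})(x)\prec\W_1(\varphi_{il})(y)$, which is precisely $x\prec y$ in the limit. Therefore $\Phi$ is a $\CatW^\sim$-isomorphism and $\CatW_1$ is continuous.
\end{proof}
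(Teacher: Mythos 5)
Your proposal follows essentially the same route as the paper: build the canonical comparison map from the algebraic limit of the $\W_1(A_i)$ into $\W_1$ of the limit algebra, prove surjectivity by lifting the positive element to a finite stage and then lifting the unitary via the $\AbGp$-limit description of $\K_1$ of the hereditary subalgebras, and prove the reverse implication by descending the Cuntz comparison and the $\K_1$-equality to a common finite stage. The only (minor) omission is that you verify that $\Phi$ reflects $\prec$ but never that it is injective as a set map --- since $\prec$ need not be antisymmetric this is not automatic --- yet the missing step is exactly the paper's injectivity argument, i.e.\ the same two-coordinate descent applied to equality in $\W(A_{alg}/N)$ and in $\K_1(\her\varphi_{k\infty}(b))$ instead of to $\prec$.
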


\begin{proof}
This proof is an adapted version of \cite[Theorem 2.2.9]{APT14} and \cite[Theorem 2.9]{APT19}.
Let $(A_i,\varphi_{ij})_{i \in I}$ be an inductive system in $C^*_{loc,sr1}$ and let $(A_{alg}/N,\varphi_{i\infty})$ be its inductive limit. Without loss of generality, we can suppose that each $A_i\simeq M_\infty(A_i)$; see \autoref{prg:localcat}. Thus, we may suppose that each element of $\CatW(A_i)$ is realized by a positive element of $A_i$. 

Let $\sigma_{ij}:=\CatW_1(\varphi_{ij})$ and consider the inductive system $(\CatW_1(A_i),\psi_{ij})_{i \in I}$ in $\CatW^{\sim}$. We denote by $(S,\sigma_{i\infty})$ its inductive limit in $\CatW^{\sim}$. Observe that $(\CatW_1(A_{alg}/N),\CatW_1(\varphi_{i\infty}))$ is a co-cone for the inductive system. Hence from universal properties, we deduce that there exists a unique $\CatW^\sim$-morphism $w_1:S\longrightarrow \CatW_1(A_{alg}/N)$ such that for all $i,j\in I$ with $i\leq j$, the following diagram commutes: 
\[
\xymatrix{
\CatW_1(A_i)\ar[dd]_{\sigma_{ij}=\CatW_1(\varphi_{ij})}\ar[rd]^{\sigma_{i\infty}}\ar@/^2pc/[rrd]^{\CatW_1(\varphi_{i\infty})} &  &\\
& S\ar@{.>}[r]_{\hspace{-0,9cm}\exists!\, w_1}&  \CatW_1(A_{alg}/N) \\
\CatW_1(A_j)\ar[ru]_{\sigma_{j\infty}}\ar@/_2pc/[rru]_{\CatW_1(\varphi_{j\infty})}  & &
} 
\]
To complete the proof, let us show that $w_1$ is a $\CatW^{\sim}$-isomorphism. 
First, we start to show that $w_1$ is surjective. Let $[(a,u)]\in \CatW_1(A_{alg}/N)$. Since $a\in A_{alg}/N$, we know that there exists $a_k\in (A_k)_+$ such that $\varphi_{k\infty}(a_k)=a$. Also, $u$ is a unitary element of $\her(a)^\sim=\overline{\varphi_{k\infty}(a_k) (A_{alg}/N) \varphi_{k\infty}(a_k)}^\sim$. Now, observe that $\CatCa-\lim\limits_{\longrightarrow j>k}(\her \varphi_{kj}(a_k),\varphi_{jl})\simeq (\her(a),\varphi_{j\infty})$. Hence for any $\epsilon>0$, there exists $j\geq k$ and a unitary element $u_j$ of $\her \varphi_{kj}(a_k)^\sim$ such that $\Vert u - \overline{\varphi}^\sim_{j\infty}(u_j)\Vert < \epsilon$. In particular, for $\epsilon<2$, we obtain a unitary element $u_j$ of $\her \varphi_{kj}(a_k)^\sim$ such that $[u]=[\overline{\varphi}^\sim_{j\infty}(u_j)]$ in $\K_1(\her(a))$. 
We compute that $\W_1(\varphi_{j\infty})([(\varphi_{kj}(a_k),u_j)])=[(\varphi_{k\infty}(a_k),\overline{\varphi_{j\infty}}^\sim(u_j))]=[(a,u)]$.

Thus, by the commutativity of the diagram above we obtain 
\vspace{-0,1cm}\[w_1\circ\sigma_{j\infty}([(\varphi_{kj}(a_k),u_j)])= \W_1(\varphi_{j\infty})([(\varphi_{kj}(a_k),u_j)]) =[(a,u)]
\vspace{-0,1cm}\] as desired. We conclude that $w_1$ is surjective. 

Finally, let us show that $w_1$ is injective. Let $s,t\in S$ such that $w_1(s) = w_1(t)$. Since the inductive limit $S$ is algebraic, there exists some $k\in\N$ and $s_k,t_k$ in $W_1(A_k)$ such that $\sigma_{k\infty}(s_k)=s$ and $\sigma_{k\infty}(t_k)=t$. 
 Now choose $a,b\in (A_k)_+$ and unitary elements $u, v$ in the respective hereditary subalgebras such that $s_k=[(a,u)] $ and $t_k=[(b,v)]$. We know that $w_1(s)= w_1(t)$ and using the commutativity of the above diagram, we deduce that
\[
\hspace{0,1cm}\left\{
    \begin{array}{ll}
    		\, [\varphi_{k\infty}(a)]= [\varphi_{k\infty}(b)] \text{ in } \CatW(A_{alg}/N).\\
        \,[\theta_{\varphi_{k\infty}(a) \varphi_{k\infty}(b)}(\overline{\varphi_{k\infty}}^\sim(u))]= [\overline{\varphi_{k\infty}}^\sim(v)] \text{ in } 
\K_1(\her \varphi_{k\infty}(b)).
    \end{array}
\right.
\]
Again, since the inductive limits are algebraic, we conclude that there exists $l\geq k$ such that:
\[
\hspace{0,1cm}\left\{
    \begin{array}{ll}
    		\,[\varphi_{kl}(a)]=[\varphi_{kl}(b)] \text{ in } \CatW(A_l).\\
        \,[\theta_{\varphi_{kl}(a) \varphi_{kl}(b)}(\overline{\varphi_{kl}}^\sim(u))]= [\overline{\varphi_{kl}}^\sim(v)] \text{ in } 
\K_1(\her \varphi_{kl}(b)).
    \end{array}
\right.
\]
We conclude that $\sigma_{kl}(s_k)=\sigma_{kl}(t_k)$ for some $l\geq k$. Thus $s=t$, which ends the proof.
\end{proof}

\subsection{Continuity of the functor \texorpdfstring{$\Cu_1$}{Cu1}}
\label{prg:identification}
We now have all the tools to conclude that $\Cu_1:\Cu^\sim\longrightarrow \W^\sim$ is a continuous functor, using the same techniques as in \cite[Chapter 3]{APT14}. First of all, using a similar argument as in \cite[Proposition 3.1.6]{APT14}, we easily deduce the following:\\
Let $(S,\prec)$ be a $\CatW^{\sim}$-semigroup. Then there exists a $\Cu^\sim$-semigroup $\gamma^{\sim}(S)$ together with a $\CatW^\sim$-morphism $\alpha_{S}:S\longrightarrow \gamma^\sim(S)$ satisfying the following conditions:

(i) The morphism $\alpha_{S}$ is an \textquoteleft $\prec$-embedding\textquoteright\, in the sense that $s'\prec s$ whenever $\alpha(s')\ll \alpha(s)$.

(ii) The morphism $\alpha_{S}$ has a \textquoteleft dense image\textquoteright\, in the sense that for any two $t',t\in\gamma^\sim(S)$ such that $t'\ll t$ there exists $s\in S$ such that $t'\leq \alpha(s)\leq t$.

Note that the construction of such a \emph{completion} is similar in every way except that we do not impose the transitive binary relation on $S$ to be positive. This implies that the ordered monoid obtained respects the axioms (O1)-(O4) but need not be positively ordered, whence $\gamma^{\sim}(S)$ belongs to $\Cu^\sim$ instead of $\Cu$. 
Again, arguing as in \cite[Theorem 3.1.8]{APT14}, we deduce that $\Cu^{\sim}$ is a (full) reflective subcategory of $\CatW^{\sim}$ with reflector $\gamma^\sim$. In particular, $\Cu^{\sim}$ has inductive limits. 
Finally, observe that for any $A\in\CatCasr$, the compact-containment relation on $\Cu_1(A)$ and the $\prec$ relation on $\CatW_1(A\otimes\mathcal{K})$ agree; see \cite[Remark 3.2.4]{APT14}. Thus,
we have that $\Cu_1(A)=\CatW_1(A\otimes\mathcal{K})$ as $\Cu^{\sim}$-semigroups.

\begin{thm}
There exists a natural isomorphism $\gamma^{\sim}\circ \CatW_1\simeq\Cu_1\circ\gamma$, where $\gamma$ is the reflector from $\CatCa_{loc,sr1}$ to $\CatCasr$ defined in \autoref{prg:localcat}. In particular, for any $\CatCa$-algebra $A$ of stable rank one, there is a (natural) $\Cu^\sim$-isomorphism between $\Cu_1(A)\simeq \gamma^\sim(\W_1(A))$.
\end{thm}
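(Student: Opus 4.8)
The plan is to establish the natural isomorphism $\gamma^\sim \circ \W_1 \simeq \Cu_1 \circ \gamma$ by exploiting the universal properties of the two reflectors $\gamma$ (from $\CatCa_{loc,sr1}$ to $\CatCasr$) and $\gamma^\sim$ (from $\CatW^\sim$ to $\Cu^\sim$), together with the continuity of $\W_1$ just proved. The key structural observation to make first is the one already recorded at the end of \autoref{prg:identification}: for any $\CatCa$-algebra $A$ of stable rank one, the relation $\prec$ on $\W_1(A\otimes\mathcal{K})$ agrees with the compact-containment relation $\ll$ on $\Cu_1(A)$, so that $\Cu_1(A) = \W_1(A\otimes\mathcal{K})$ already lives in $\Cu^\sim$. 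Since $\W_1(A\otimes\mathcal{K})$ is already a $\Cu^\sim$-semigroup, the completion map $\alpha_{\W_1(A\otimes\mathcal{K})}$ must be an isomorphism onto $\gamma^\sim(\W_1(A\otimes\mathcal{K}))$; this is the precise content of the fact that $\gamma^\sim$ restricts to the identity on objects already satisfying (O1)--(O4), exactly as in \cite[Theorem 3.1.8]{APT14}.

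The main thread is then to verify the identity on a general local $\CatCa$-algebra $A \in \CatCa_{loc,sr1}$. First I would write any such $A$ as an inductive limit in $\CatCa_{loc,sr1}$ of honest $\CatCa$-algebras of stable rank one (or directly as the directed union $A = \bigcup_i A_i$ of its defining complete $^*$-subalgebras, viewed as a limit), so that $\gamma(A) = \overline{A} = \CatCasr\text{-}\varinjlim A_i$. Applying the continuity of $\W_1$ gives $\W_1(A) \simeq \CatW^\sim\text{-}\varinjlim \W_1(A_i)$, and then applying $\gamma^\sim$, which is a left adjoint and hence preserves colimits, yields $\gamma^\sim(\W_1(A)) \simeq \Cu^\sim\text{-}\varinjlim \gamma^\sim(\W_1(A_i))$. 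On the other hand, for each genuine $\CatCa$-algebra $A_i$ of stable rank one we have the object-level identity $\gamma^\sim(\W_1(A_i)) = \gamma^\sim(\W_1(A_i)) \simeq \Cu_1(A_i)$ from the previous paragraph, and the continuity of $\Cu_1$ (the first displayed theorem in the introduction) gives $\Cu^\sim\text{-}\varinjlim \Cu_1(A_i) \simeq \Cu_1(\overline{A}) = \Cu_1(\gamma(A))$. Chaining these isomorphisms produces the desired $\gamma^\sim(\W_1(A)) \simeq \Cu_1(\gamma(A))$.

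To upgrade this from a family of object-level isomorphisms to a genuine natural isomorphism of functors, I would invoke uniqueness of adjoints. Both $\gamma^\sim \circ \W_1$ and $\Cu_1 \circ \gamma$ are functors $\CatCa_{loc,sr1} \to \Cu^\sim$, and each factors a universal/reflective construction. The cleanest formulation is to check that $\gamma^\sim \circ \W_1$ and $\Cu_1 \circ \gamma$ both co-represent the same functor, or equivalently that the natural transformation built from the completion maps $\alpha_{\W_1(-)}$ and the limit-comparison maps $w_1$ (constructed in the proof of the continuity theorem for $\W_1$) is a componentwise isomorphism. Naturality in $A$ follows because every map in sight---$\alpha_S$, the reflector unit, the comparison $w_1$---is itself natural in its argument, so the composite square commutes automatically; the verification reduces to diagram-chasing with the universal arrows and requires no new computation.

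The step I expect to be the main obstacle is not any single isomorphism but the bookkeeping needed to ensure that the reflector $\gamma^\sim$ genuinely commutes with the inductive limits supplied by the continuity of $\W_1$, and that the resulting comparison map is the \emph{same} as the one coming from the universal property of $\Cu_1$. Concretely, one must confirm that $\gamma^\sim$, being a left adjoint to the inclusion $\Cu^\sim \hookrightarrow \CatW^\sim$, preserves the specific $\CatW^\sim$-colimits computed in the previous theorem, and that under the identification $\Cu_1(A) = \W_1(A\otimes\mathcal{K})$ the $\prec$-completion and the Cuntz-completion agree not merely as objects but compatibly with all connecting morphisms $\sigma_{ij}$. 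This is where one must be careful that the absence of positivity of the binary relation---the recurring caveat throughout the section---does not obstruct the standard adjunction arguments of \cite[Chapter 3]{APT14}; once that is granted, the rest is a formal consequence of the two reflective/coreflective adjunctions and the two continuity theorems already in hand.
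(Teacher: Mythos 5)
Your reduction to honest $\CatCa$-algebras does not close the argument, because the base case you rely on is never established, and it is exactly where all the work lies. What is recorded at the end of \autoref{prg:identification} is that $\W_1(A\otimes\mathcal{K})$ coincides with $\Cu_1(A)$ and is therefore already complete; but your step needs $\gamma^\sim(\W_1(A_i))\simeq\Cu_1(A_i)$, where $\W_1(A_i)$ is built from positive elements of $M_\infty(A_i)$ with the relation $a\lesssim_{\Cu}(b-\epsilon)_+$. This is in general a proper, non-complete subobject of $\Cu_1(A_i)$ (already $\W(\mathbb{C})=\N$ while $\Cu(\mathbb{C})=\overline{\N}$), so the observation that ``the completion map of a complete object is an isomorphism'' does not apply to it; the tautological line ``$\gamma^\sim(\W_1(A_i))=\gamma^\sim(\W_1(A_i))\simeq\Cu_1(A_i)$'' conceals precisely the missing step. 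That identity is the ``in particular'' clause of the theorem, and proving it requires showing that the canonical map $\W_1(A)\to\Cu_1(\overline{A})$ induced by $M_\infty(A)\hookrightarrow\overline{A}\otimes\mathcal{K}$ is a $\prec$-embedding with dense image; the dense-image part is the nontrivial point, where given $t'\ll t$ one interpolates $[a']\leq[b]\leq[a]$ with $b\in M_\infty(A)_+$ as in \cite[Lemma 3.2.7]{APT14} and transports the unitary along a standard map $\theta_{a'b}$. This is what the paper's proof does, directly for an arbitrary $A\in\CatCa_{loc,sr1}$ and with no limit decomposition at all.

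There is a second, independent problem: your final link $\Cu^\sim\text{-}\lim\limits_{\longrightarrow}\Cu_1(A_i)\simeq\Cu_1(\overline{A})$ invokes the continuity of $\Cu_1$, but in this paper that continuity is \autoref{cor:cu1ctinuous}, which is \emph{deduced from} the present theorem together with the continuity of $\W_1$; using it here is circular unless you supply an independent proof. By contrast, your remarks on naturality are essentially fine: once the completion statement is proved, the maps $\W_1(i)$ are natural in $A$ and the universal property of the reflector $\gamma^\sim$ does the rest. It is the object-level isomorphism itself that your proposal leaves unproved.
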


\begin{proof}
The aim of the proof is to show that $(\Cu_1(\gamma(A)),W_1(i))$ is a $\Cu^\sim$-completion of $W_1(A)$ for any $A\in \CatCa_{loc,sr1}$, where $W_1(i)$ is built as follows: 

Let $A\in\CatCa_{loc,sr1}$, write $B:=M_{\infty}(A)\in \CatCa_{loc,sr1}$. Consider the canonical inclusion $i:B\hooklongrightarrow \overline{B}\simeq\overline{A}\otimes\mathcal{K}$. Then $i$ induces a $\CatW^\sim$-morphism $W_1(i):\CatW_1(B)\longrightarrow \CatW_1(\overline{B})$. On the other hand, we know that $\CatW_1(B)=\CatW_1(A)$ and that $\CatW_1(\overline{B})\simeq\Cu_1(\overline{A})$. Thus, we obtain a $\CatW^\sim$-morphism $W(i):\W_1(A)\longrightarrow \Cu_1(\overline{A})$ (we use the same notation).
By the argument in \cite[Theorem 3.1.8]{APT14}, we only have to check that $W_1(i)$ is a $\prec$-embedding and that it has a dense image. 

Let $s,s'\in\W_1(A)$ such that $\W_1(i)(s')\ll \W_1(i)(s')$. We deduce that $\W_1(i)(s')\prec\W_1(i)(s')$. Also, observe that $\W_1(i)$ is in fact an order embedding (even more, it is the canonical injection). Thus, we conclude that $s\prec s'$ and hence $\W_1(i)$ is an \textquoteleft $\prec$-embedding\textquoteright.

Let $t,t'\in\Cu_1(\gamma (A))$ such that $t' \ll t$. Now pick $a,a'\in (\gamma(A)\otimes\mathcal{K})_+$ and unitary elements $u,u'$ in the respective hereditary subalgebras of $a,a'$, such that $t:=[(a,u)]$ and $t':=[(a',u')]$. Then, we know that $[a']\ll[a]$ in $\Cu(\overline{A})$ and that $\chi_{a'a}([u'])=[u]$. Using the argument in \cite[Lemma 3.2.7]{APT14}, there exists $b \in M_{\infty}(A)_+$ such that $[a']\leq [b]\leq [a]$ in $\Cu(\overline{A})$. Now consider $s:=[(b,\theta_{a'b}(u))]\in \W_1(A)$ and we get that $t'\leq \W_1(i)(s)\leq t$ in $\Cu_1(\overline{A})$. It follows that $\W_1(i)$ has a \textquoteleft dense image\textquoteright\, and hence that $(\W_1(i), \Cu_1(\gamma (A)))$ is a $\Cu^{\sim}$-completion of $\CatW_1(A)$.
\end{proof}

\begin{cor}
\label{cor:cu1ctinuous}
The functor $\Cu_1: \CatCasr \longrightarrow \Cu^{\sim}$ is continuous. More precisely, given an inductive system $(A_i,\phi_{ij})_{i\in I}$ in $\CatCasr$, then:
\[
\Cu^\sim-\lim\limits_{\longrightarrow}(\Cu_1(A_i),\Cu_1(\phi_{ij}))\simeq \Cu_1(\CatCasr-\lim\limits_{\longrightarrow}((A_i,\phi_{ij})))\simeq \gamma^\sim(\W^\sim-\lim\limits_{\longrightarrow}(\W_1(A_i),\W_1(\phi_{ij}))).
\]
\end{cor}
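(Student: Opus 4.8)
The plan is to deduce this final corollary almost entirely from the machinery already assembled, treating it as the abstract-completion counterpart of the precompleted continuity theorem for $\CatW_1$. First I would invoke the identification established just above: for any $A\in\CatCasr$ we have $\Cu_1(A)=\CatW_1(A\otimes\mathcal{K})$ as $\Cu^\sim$-semigroups, and more generally the natural isomorphism $\gamma^\sim\circ\CatW_1\simeq\Cu_1\circ\gamma$. The strategy is to transport the continuity of $\CatW_1$ (already proved as a theorem for $C^*_{loc,sr1}\longrightarrow\CatW^\sim$) across the reflector $\gamma^\sim:\CatW^\sim\longrightarrow\Cu^\sim$, using that a reflector is a left adjoint and hence preserves colimits, in particular inductive limits.

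The key steps, in order, are as follows. Given an inductive system $(A_i,\phi_{ij})_{i\in I}$ in $\CatCasr$, I would first view it as a system in $\CatCa_{loc,sr1}$ and form its limit there; since $\CatCa$ is a reflective subcategory of $\CatCa_{loc}$ with reflector $\gamma$ (the completion), the reflector preserves the inductive limit, so $\CatCasr\text{-}\lim(A_i,\phi_{ij})=\gamma\bigl(\CatCa_{loc,sr1}\text{-}\lim(A_i,\phi_{ij})\bigr)$. Next, applying the continuity theorem for $\CatW_1$ yields
\[
\CatW^\sim\text{-}\lim\bigl(\CatW_1(A_i),\CatW_1(\phi_{ij})\bigr)\simeq \CatW_1\bigl(\CatCa_{loc,sr1}\text{-}\lim(A_i,\phi_{ij})\bigr).
\]
Then I would apply the reflector $\gamma^\sim:\CatW^\sim\longrightarrow\Cu^\sim$ to both sides. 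Since $\gamma^\sim$ is a left adjoint (being the reflector onto the full reflective subcategory $\Cu^\sim\subseteq\CatW^\sim$ established above), it preserves inductive limits, so the left-hand side becomes $\Cu^\sim\text{-}\lim\bigl(\gamma^\sim\CatW_1(A_i),\gamma^\sim\CatW_1(\phi_{ij})\bigr)$. Using the natural isomorphism $\gamma^\sim\circ\CatW_1\simeq\Cu_1\circ\gamma$ on each object and morphism, this is exactly $\Cu^\sim\text{-}\lim\bigl(\Cu_1(A_i),\Cu_1(\phi_{ij})\bigr)$, because each $A_i$ already lies in $\CatCasr$ so $\gamma(A_i)=A_i$. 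Applying the same natural isomorphism to the right-hand side together with the limit-preservation of $\gamma$ gives $\Cu_1\bigl(\CatCasr\text{-}\lim(A_i,\phi_{ij})\bigr)$, which yields the first claimed isomorphism; the second displayed isomorphism is then just the statement that $\Cu^\sim\text{-}\lim=\gamma^\sim(\CatW^\sim\text{-}\lim)$, recorded for emphasis.

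The main obstacle, and the only place requiring genuine care rather than formal adjunction bookkeeping, is verifying that the naturality squares for $\gamma^\sim\circ\CatW_1\simeq\Cu_1\circ\gamma$ commute compatibly with the connecting morphisms $\phi_{ij}$, so that the two inductive systems are genuinely isomorphic as diagrams and not merely objectwise isomorphic. Concretely, one must check that $\gamma^\sim(\CatW_1(\phi_{ij}))$ corresponds to $\Cu_1(\phi_{ij})$ under the identification, which is precisely the content of the natural transformation being natural; this follows from the functoriality established in Proposition~\ref{prop:w1phi} and Proposition~\ref{prop:cu1functor} together with the fact that $\gamma^\sim$ acts on morphisms by the universal property of the completion. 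Once this compatibility is in hand, the isomorphism of limits is immediate from the universal property, and I would close by noting that all three objects in the display are thereby canonically identified.
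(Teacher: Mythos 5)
Your proposal is correct and follows exactly the route the paper intends: the corollary is deduced from the continuity of $\CatW_1$, the natural isomorphism $\gamma^\sim\circ\CatW_1\simeq\Cu_1\circ\gamma$, and the fact that the reflectors $\gamma$ and $\gamma^\sim$, being left adjoints, preserve inductive limits. The paper leaves this chain of identifications implicit (mirroring the corresponding argument for $\Cu$ in the cited literature), so your write-up simply makes explicit the same adjunction bookkeeping.
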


\subsection{Algebraic \texorpdfstring{$\Cu^\sim$}{Cu-tilde} semigroups and \texorpdfstring{$\CatoM$}{Mon}-completion}
\label{sec:Part V}
In this last subsection, we will briefly introduce algebraic $\Cu^\sim$-semigroups in order to link the notion of real rank zero for a $\CatCa$-algebra $A$ of stable rank one, that ensures an abundance of projections, with the notion of \textquoteleft density\textquoteright\, of compact elements in $\Cu_1(A)$. In fact, as compact elements of $\Cu_1(A)$ are entirely determined by the ones of its positive cone $\Cu(A)$ (see \autoref{cor:compactcu1}), all results from $\Cu(A)$ will apply here. These can be found in \cite[\S 5.5]{APT14}.\\

Let $S\in\Cu^\sim$. We denote by $S_c:=\{s\in S\mid s\ll s\}$. It is easily shown that $S_c\in\CatoM$ and that for any $\Cu^\sim$-morphism $f:S\longrightarrow T$ between $S,T\in \Cu^\sim$, we have $f(S_c)\subset T_c$. Thus, $f$ induces a $\CatoM$-morphism $f_c:=f_{|S_c}:S_c\longrightarrow T_c$.
Hence, alike $\nu_+$ that recovers the positive cone of a $\Cu^\sim$-semigroup (see \autoref{prop:nu+}), we obtain a functor $\nu_c$ that recovers the compact elements of a $\Cu^\sim$-semigroup:
\[
	\begin{array}{ll}
		\nu_c: \Cu^\sim \longrightarrow \CatoM\\
		\hspace{1cm} S \longmapsto S_c\\
		\hspace{1,05cm} f \longmapsto f_c
	\end{array}
\]

Conversely, let $M\in\CatoM$. Then, $\leq$ is a natural transitive binary relation on $M$ such that $(M,\leq)\in\CatW^\sim$. We denote $\Cu^\sim(M):=\gamma^\sim(M,\leq)$ the $\Cu^\sim$-completion of $(M,\leq)$. Any $\CatoM$-morphism $f:M\longrightarrow N$ between $M,N\in\CatoM$ induces a $\Cu^\sim$-morphism $\gamma^\sim(f): \gamma^\sim(M)\longrightarrow \gamma^\sim(N)$. Thus we obtain a functor:
\[
	\begin{array}{ll}
		\Cu^\sim: \CatoM \longrightarrow \Cu^\sim\\
		\hspace{1,45cm} M \longmapsto \Cu^\sim(M)\\
		\hspace{1,55cm} f \longmapsto \gamma^\sim(f)
	\end{array}
\]

\begin{dfn}
Let $S\in \Cu^\sim$. We say that $S$ is an \emph{algebraic $\Cu^\sim$-semigroup} if every element in $S$ is the supremum of an increasing sequence of compact elements, that is, an increasing sequence in $S_c$. We denote by $\Cu^\sim_{alg}$ the full subcategory of $\Cu^\sim$ consisting of algebraic $\Cu^\sim$-semigroups (see \cite[\S 5.5]{APT14}).
\end{dfn}

\begin{prop} (cf \cite[Proposition 5.5.4]{APT14})

(i) Let $M\in \CatoM$. Then $\Cu^\sim(M)$ is an algebraic $\Cu^\sim$-semigroup and, moreover, there is a natural identification between $M$ and the ordered monoid of compact elements of $\Cu^\sim(M)$.

(ii) For any algebraic $\Cu^\sim$-semigroup $S$, we have $\Cu^\sim(S_c)\simeq S$ as $\Cu^\sim$-semigroups.

\end{prop}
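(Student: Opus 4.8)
The plan is to follow the blueprint of \cite[Proposition 5.5.4]{APT14}, adapting it to the (possibly non-positive) setting of $\Cu^\sim$; the guiding observation is that every step involved is purely order-theoretic and never appeals to positivity, so the absence of a positive order is harmless. Throughout I write $\alpha_M\colon (M,\leq)\longrightarrow \Cu^\sim(M)=\gamma^\sim(M,\leq)$ for the canonical $\CatW^\sim$-morphism supplied by the completion, and I use freely its two defining features recorded in \autoref{prg:identification}: it is a $\prec$-embedding and it has dense image. Since here the transitive binary relation on $M$ is the order $\leq$ itself, these features read $m'\leq m \iff \alpha_M(m')\ll\alpha_M(m)$, and: for every $t'\ll t$ in $\Cu^\sim(M)$ there exists $m\in M$ with $t'\leq\alpha_M(m)\leq t$.

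For part (i), I would first note that reflexivity of $\leq$ gives $\alpha_M(m)\ll\alpha_M(m)$ for all $m$, so each $\alpha_M(m)$ is compact. Conversely, if $t\in\Cu^\sim(M)$ is compact, then applying density to $t\ll t$ produces $m$ with $t\leq\alpha_M(m)\leq t$, i.e. $t=\alpha_M(m)$; hence the compact elements of $\Cu^\sim(M)$ are exactly the image of $\alpha_M$. That $\alpha_M$ is an order-embedding of ordered monoids follows because it is a monoid morphism preserving $\leq$, while the converse direction $\alpha_M(m')\leq\alpha_M(m)\Rightarrow m'\leq m$ is obtained from compactness of $\alpha_M(m')$ via $\alpha_M(m')\ll\alpha_M(m')\leq\alpha_M(m)$ together with the $\prec$-embedding property. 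This yields the natural identification $M\simeq \nu_c(\Cu^\sim(M))$. Finally, for algebraicity, given $t$ I would take a $\ll$-increasing sequence $(t_n)_n$ with supremum $t$ from (O2), interpolate by density to obtain $m_n\in M$ with $t_n\leq\alpha_M(m_n)\leq t_{n+1}$, and conclude that $(\alpha_M(m_n))_n$ is an increasing sequence of compact elements with supremum $t$.

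For part (ii), let $S$ be an algebraic $\Cu^\sim$-semigroup. The inclusion $j\colon S_c\hookrightarrow S$ is a $\CatoM$-morphism, and viewing $(S_c,\leq)$ as a $\CatW^\sim$-semigroup it becomes a $\CatW^\sim$-morphism into $S$ equipped with $\ll$: indeed $s\leq t$ in $S_c$ forces $s\ll s\leq t$, hence $s\ll t$, and the continuity axiom (M) is immediate by taking $s'=s$. By the universal property of the reflector $\gamma^\sim$, the morphism $j$ factors uniquely as a $\Cu^\sim$-morphism $\bar{j}\colon \Cu^\sim(S_c)\longrightarrow S$ with $\bar{j}\circ\alpha_{S_c}=j$, and it remains to check that $\bar{j}$ is an isomorphism. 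Surjectivity is clear, since the image contains $j(S_c)=S_c$, the map $\bar{j}$ preserves suprema, and $S$ is algebraic. For injectivity I would show $\bar{j}$ reflects the order: by part (i) every compact of $\Cu^\sim(S_c)$ has the form $\alpha_{S_c}(d)$ with $\bar{j}(\alpha_{S_c}(d))=d\in S_c$; writing $x=\sup_n c_n$ and $y=\sup_m e_m$ as suprema of increasing sequences of compacts and transporting the inequalities $\bar{j}(c_n)\ll\bar{j}(y)$ across the way-below relation in $S$, I recover $c_n\leq y$ for all $n$, whence $x\leq y$. Thus $\bar{j}$ is an order-isomorphism, hence a $\Cu^\sim$-isomorphism.

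The only genuinely delicate point is the order-reflection step in part (ii), where one must move carefully between the compact elements of $\Cu^\sim(S_c)$ and those of $S$ using exactly the way-below relation furnished by (O2) and density, rather than the order alone; the compactness characterization in part (i) is what makes this transit legitimate. Everything else is bookkeeping that transfers essentially verbatim from \cite{APT14} once the hypothesis of positivity is dropped.
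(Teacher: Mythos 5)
Your proof is correct and takes exactly the route the paper intends: the paper omits the proof entirely, deferring to \cite[Proposition 5.5.4]{APT14}, and your argument is a faithful adaptation of that blueprint, using only the $\prec$-embedding and dense-image properties of the completion $\alpha_M$ together with the fact (recorded in \autoref{lma:PoMisoCu}) that a surjective order- and monoid-isomorphism between such semigroups is automatically a $\Cu^\sim$-isomorphism. Your observation that no step appeals to positivity of the order is precisely the point that justifies the paper's silent transfer of the argument to the $\Cu^\sim$ setting.
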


\begin{prop} (cf \cite[Corollary 5]{CEI08}, \cite[Remark 5.5.2]{APT14}).
Whenever $A$ has real rank zero, $\Cu(A)$ is an algebraic $\Cu$-semigroup. If moreover $A$ has stable rank one, then the converse is true.
\end{prop}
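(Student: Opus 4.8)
The plan is to prove the two implications separately, in each case using the bridge that, under stable rank one, the compact elements of $\Cu(A)$ are exactly the classes of projections, so that the purely order-theoretic notion of being algebraic translates into the geometric statement that projection classes are cofinal. Throughout I use that $[a]=\sup_n[(a-1/n)_+]$ with $[(a-1/n)_+]\ll[a]$, and that for any projection $p$ one has $[p]=[(p-\tfrac12)_+]\ll[p]$, so projection classes are always compact.

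For the forward implication I would first pass to the stabilisation, noting that real rank zero is preserved by $A\mapsto A\otimes\mathcal{K}$. The key input is the standard fact that in a real rank zero algebra, for any positive $a$ and any $0<\epsilon'<\epsilon$ there is a projection $p$ with $(a-\epsilon)_+\lesssim_{\Cu}p\lesssim_{\Cu}(a-\epsilon')_+$ (see \cite[\S5.5]{APT14}). Applying this with $\epsilon=1/n$ and $\epsilon'=\tfrac{1}{2n}$ produces projections $p_n$ with $[(a-1/n)_+]\le[p_n]\le[(a-\tfrac{1}{2n})_+]$; these classes are compact, their supremum is $[a]$ since they are squeezed between the $[(a-1/n)_+]$, and after passing to the subsequence $(p_{2^k})_k$ they form an increasing sequence. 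Hence every element of $\Cu(A)$ is a supremum of an increasing sequence of compact elements, i.e.\ $\Cu(A)$ is algebraic.

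For the converse, assume in addition that $A$ has stable rank one and that $\Cu(A)$ is algebraic. Here the decisive feature of stable rank one, via the Peligrad--Zsid\'o picture recalled in \autoref{prop:viewequi}, is that $[a]\ll[a]$ forces $a$ to be Cuntz equivalent to a projection, so every compact element is a projection class. Fix $a\in A_+$ and $\epsilon>0$. Writing $[a]=\sup_n c_n$ with $c_n=[p_n]$ compact and increasing, and using $[(a-\epsilon)_+]\ll[a]$, I obtain $n$ with $[(a-\epsilon)_+]\le[p_n]\le[a]$. Since $A$ has stable rank one, $[p_n]\le[a]$ lets me transport $p_n$ to a Murray--von Neumann equivalent projection $q\in\her(a)$, and then $(a-\epsilon)_+\lesssim_{\Cu}q$ with $q\in\her(a)$. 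As $\epsilon$ was arbitrary, projection classes are cofinal in $\her(a)$ for every $a$, which by the known characterisation of real rank zero in terms of (Cuntz-)density of projections in hereditary subalgebras \cite[\S5.5]{APT14} yields that $A$ has real rank zero.

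The main obstacle I expect is in the converse, precisely at the final translation from the order-theoretic conclusion back to the topological definition of real rank zero: one must convert the Cuntz comparison $[(a-\epsilon)_+]\le[p_n]\le[a]$ into an honest projection inside $\her(a)$ serving as an approximate unit, which is exactly where stable rank one is used twice --- first to ensure compact elements are projection classes, and then to realise the projection in the correct hereditary subalgebra via Murray--von Neumann equivalence. The forward direction is comparatively soft, following directly from the density of projections guaranteed by real rank zero together with the description $[a]=\sup_n[(a-1/n)_+]$.
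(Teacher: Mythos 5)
The paper itself offers no proof of this proposition: it is stated as a citation of \cite[Corollary 5]{CEI08} and \cite[Remark 5.5.2]{APT14}, so your proposal has to be measured against the standard argument in those references rather than against anything in the text. Your forward direction is complete and is the standard one: real rank zero passes to $A\otimes\mathcal{K}$, the Brown--Pedersen property supplies projections $p_n$ with $[(a-1/n)_+]\leq[p_n]\leq[(a-\tfrac{1}{2n})_+]$, projection classes are compact, and your subsequence $(p_{2^k})_k$ is indeed increasing with supremum $[a]$ (the naive sequence $(p_n)_n$ would not be, so that adjustment is needed and you made it). The reduction steps of your converse are also right: under stable rank one every compact element of $\Cu(A)$ is a projection class, and algebraicity plus $[(a-\epsilon)_+]\ll[a]$ squeezes a projection class between $[(a-\epsilon)_+]$ and $[a]$.

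The gap is exactly at the point you flag, and it is more than a technicality: you end with, for each $\epsilon>0$, a single projection $q\in\her(a)$ satisfying only the Cuntz relation $(a-\epsilon)_+\lesssim_{\Cu}q$, and then invoke ``the known characterisation of real rank zero in terms of (Cuntz-)density of projections in hereditary subalgebras.'' That is not the Brown--Pedersen characterisation, which requires an honest approximate unit of projections in $\her(a)$, i.e.\ a \emph{nested} increasing sequence of projections $q_1\leq q_2\leq\cdots$ in $\her(a)$ with $\Vert x-q_nx\Vert\to 0$ for $x\in\her(a)$; Cuntz subequivalence $(a-\epsilon)_+\lesssim_{\Cu}q$ gives no control of $\Vert (1-q)a(1-q)\Vert$ and the various $q_\epsilon$ are not comparable. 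Moreover, the ``characterisation'' you cite from \cite[\S 5.5]{APT14} is, as far as that reference goes, the very equivalence being proved (Remark 5.5.2 there simply points back to \cite[Corollary 5]{CEI08}), so as written the step is circular. What is actually needed is the additional nesting argument: using stable rank one (cancellation of projections plus conjugation by unitaries, in the spirit of \cite[Lemma 4.3]{BPT06}, which this paper uses elsewhere in the proof of \autoref{prop:llCU1}), one upgrades the abstract increasing sequence $[p_1]\leq[p_2]\leq\cdots$ with supremum $[a]$ to an increasing sequence of projections sitting inside $\her(a)$ whose right ideals exhaust $\overline{aA}$; this produces the approximate unit of projections and only then does Brown--Pedersen yield real rank zero. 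So the architecture of your proof is the correct one, but the converse is not closed until that nesting step is supplied.
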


\begin{cor}
Let $A$ be a $\CatCa$-algebra of stable rank one. Then $A$ has real rank zero if and only if $\Cu_1(A)\in\Cu^\sim_{alg}$ if and only if $\Cu(A)\in \Cu_{alg}$.
\end{cor}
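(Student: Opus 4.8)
The plan is to reduce everything to the preceding Proposition together with \autoref{cor:compactcu1}. Indeed, the equivalence ``$A$ has real rank zero $\iff \Cu(A)\in\Cu_{alg}$'' is precisely the content of the preceding Proposition (the stable rank one hypothesis being used for the nontrivial converse), so the only thing left to prove is the equivalence $\Cu(A)\in\Cu_{alg}\iff\Cu_1(A)\in\Cu^\sim_{alg}$. The whole point is that, by \autoref{cor:compactcu1}, an element $[(a,u)]\in\Cu_1(A)$ is compact exactly when its first coordinate $[a]$ is compact in $\Cu(A)$, so the compact elements of $\Cu_1(A)$ are entirely controlled by those of its positive cone.

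For the implication $\Cu_1(A)\in\Cu^\sim_{alg}\Rightarrow\Cu(A)\in\Cu_{alg}$, which is the easy direction, I would take $[a]\in\Cu(A)$ and look at the positive element $[(a,1)]\in\Cu_1(A)$. Writing it, by algebraicity, as $[(a,1)]=\sup_n[(b_n,v_n)]$ with each $[(b_n,v_n)]$ compact, the description of suprema in \autoref{lma:O1} gives $[a]=\sup_n[b_n]$ at the level of first coordinates, while each $[b_n]$ is compact by \autoref{cor:compactcu1}. Hence $[a]$ is a supremum of an increasing sequence of compact elements, and $\Cu(A)$ is algebraic.

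The real work is the converse $\Cu(A)\in\Cu_{alg}\Rightarrow\Cu_1(A)\in\Cu^\sim_{alg}$, and the hard part is dealing with the unitary coordinate. Given $[(a,u)]\in\Cu_1(A)$, algebraicity of $\Cu(A)$ furnishes an increasing sequence of compact elements $[e_n]$ with $\sup_n[e_n]=[a]$; choosing positive representatives $e_n$ with $e_n\lesssim_{\Cu}e_m$ and $a$ a representative of the supremum (as in (O1) and the stable rank one hypothesis), I would apply \autoref{cor:existencelift} to lift $u$: this yields an index $n_0$ and a unitary $u_{n_0}\in\her(e_{n_0})^\sim$ with $[(e_{n_0},u_{n_0})]\leq[(a,u)]$ and $\chi_{e_{n_0}a}([u_{n_0}])=[u]$. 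I would then set $s_k:=[(e_{n_0+k},\theta_{e_{n_0}e_{n_0+k}}(u_{n_0}))]$, exactly mimicking the proof of axiom (O2) in \autoref{prop:cuntzaxiom}: each $s_k$ is compact by \autoref{cor:compactcu1}, the sequence $(s_k)_k$ is increasing, and by the supremum description in \autoref{lma:O1} one has $\sup_k s_k=[(a,u)]$, since $\sup_k[e_{n_0+k}]=[a]$ and the matching of $\K_1$-classes pins down the unitary coordinate. This exhibits every element of $\Cu_1(A)$ as a supremum of an increasing sequence of compact elements, so $\Cu_1(A)$ is algebraic. The unitary lifting is the only genuine obstacle, but it is already packaged in \autoref{cor:existencelift}, so this final step is essentially an assembly of earlier results rather than new analysis.
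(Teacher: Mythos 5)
Your proposal is correct and follows essentially the same route as the paper: the paper's own proof is just the one-line observation that, by \autoref{cor:compactcu1}, compactness in $\Cu_1(A)$ is detected on the first coordinate, so algebraicity of $\Cu_1(A)$ and of $\Cu(A)$ are equivalent, with the real rank zero equivalence supplied by the preceding proposition. Your write-up merely fills in the details the paper leaves implicit, using exactly the tools (\autoref{cor:existencelift}, the suprema description of \autoref{lma:O1}, and the (O2) construction from \autoref{prop:cuntzaxiom}) that the paper itself relies on elsewhere.
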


\begin{proof}
Using the characterization of compacts elements of $\Cu_1(A)$ by compact elements of $\Cu(A)$ as in \autoref{cor:compactcu1}, we get that $\Cu(A)$ is algebraic if and only if $\Cu_1(A)$ is algebraic.
\end{proof}

We end this section by observing that $\nu_+$ and $\nu_c$ satisfy the following: $\nu_+\circ\nu_c\simeq \nu_c\circ\nu_+$. Hence, we sometimes consider $\nu_{+,c}:\Cu^\sim\longrightarrow\CatPoM$ as the composition of $\nu_+$ and $\nu_c$. Naturally, for any $S\in\Cu^\sim$, we denote by $S_{+,c}:=\nu_{+,c}(S)$ the positively ordered monoid of positive compact elements of $S$.

\section{Some computations \texorpdfstring{$\Cu_1$}{Cu1}-semigroups}
\label{sec:Structure}
This section is aiming to compute the unitary Cuntz semigroup of certain $\CatCa$-algebras, such as simple $\CatCa$-algebras of stable rank one, $\AF$ algebras, and some $\A\!\T,\AI$ algebras. We first give another picture of the $\Cu_1$-semigroup and its morphisms using the lattice of ideals of the $\CatCa$-algebra that makes these computations easier.

\subsection{Alternative picture of the invariant}
We start by recalling some well-known facts about (closed two-sided) ideals of a $\CatCa$-algebra. Let $A$ be a $\CatCa$-algebra, the set of closed two-sided ideals, that we write $\Lat(A)$, has a complete lattice structure given by $I\wedge J= I\cap J$ and $I\vee J = I+J$. Furthermore, it has been pointed out \cite[Section 5.1]{APT14} that the set of closed two-sided ideals that contain a full, positive element, that we write $\Lat_f(A)$, is also of an interest since it is not only a sublattice of $\Lat(A)$ but also a $\Cu$-semigroup. In fact, there exists a complete lattice isomorphism between $\Lat(A)$ and $\Lat(\Cu(A))$ that maps $\Lat_f(A)$ onto $\Lat_f(\Cu(A))$, where $\Lat_f(\Cu(A))$ denotes the sublattice of singly-generated $\Cu$-ideals in $\Cu(A)$. (See \cite[Section 5.1]{APT14} and \autoref{prg:latticecu} for more details.) 

It is not hard to see that any $\sigma$-unital ideal belongs to $\Lat_f(A)$, and the converse is not true in general. However, in order to construct the alternative picture of the unitary Cuntz semigroup, we will need the extra-hypothesis that $\Lat_f(A)=\{\sigma\text{-unital ideals of }A\}$. Observe that if $A$ is a separable $\CatCa$-algebra, then $A$ satisfies this extra-hypothesis.

Let $A$ be a $\CatCa$-algebra of stable rank one such that $\Lat_f(A)=\{\sigma\text{-unital ideals of }A\}$ and let $a\in (A\otimes\mathcal{K})_+$. Recall that for any $a\in A_+$, we write $I_a:=\overline{AaA}$ the ideal generated by $a$ and $\her(a):=\overline{aAa}$ the hereditary subalgebra generated by $a$.
 Then $a$ is obviously a full positive element in $I_a$. By the hypothesis can find a strictly positive element of $I_a$, that we write $s_a$. Since $a\in \her (s_a)$, we know that $a\lesssim_{\Cu} s_a$. Observe that the canonical inclusion $i:\her(a)\hooklongrightarrow \her (s_a)=I_a$ is one of our standard morphisms (see \autoref{rmk:canoinj}). That is, in the notation of \autoref{rmk:canoinj}, $\chi_{as_a}=\K_1(i)$. 
Furthermore, using \cite[Theorem 2.8]{Br}, we deduce that $\chi_{as_a}:\K_1(\her(a))\simeq\K_1(I_a)$ is in fact an abelian group isomorphism and $\chi_{as_a}([u]_{\K_1(\her(a))})=[u]_{\K_1(I_a)}$ for any unitary element $u\in\her(a)^\sim$. 

\begin{prop}
\label{prg:delta}
Let $A$ be a $\CatCa$-algebra of stable rank one such that $\Lat_f(A)=\{\sigma\text{-unital ideals of }A\}$. Let $a,b\in (A\otimes\mathcal{K})_+ $ be such that  $a\lesssim_{\Cu}b$. Let $s_a,s_b$ be strictly positive elements of the ideals $I_a,I_b$ respectively. Then the following diagram is commutative: 
\vspace{0cm}\[
\xymatrix{
\mathcal{U}(\her(a)^\sim)\ar[r]^{}\ar[d]_{\theta_{ab}^\sim} & \K_1(\her(a))\ar[d]^{\chi_{ab}}\ar[r]^{\simeq}_{\chi_{as_a}} & \K_1(I_a)\ar[d]^{\chi_{s_as_b}} \\
\mathcal{U}(\her(b)^\sim)\ar[r]_{} & \K_1(\her(b))\ar[r]_{\simeq}^{\chi_{bs_b}} &  \K_1(I_b)
} 
\]
In particular, for any other strictly positive element $s_{a'}$ of $I_a$, we have $\her (s_a)= \her (s_{a'})$ and hence $\chi_{s_as_{a'}}=id_{\K_1(I_a)}$, which finally gives us $\chi_{as_a}=\chi_{as_{a'}}$.
\end{prop}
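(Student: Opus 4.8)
The plan is to check the diagram square by square and then extract the final assertion from the resulting composition law. The left-hand square carries essentially no content: its horizontal arrows are the canonical class maps $u\longmapsto[u]$, and by the very definition of $\chi_{ab}$ in \autoref{rmk:canoinj} we have $\chi_{ab}=\K_1(\theta_{ab})$, so that $\chi_{ab}([u])=[\theta_{ab}^\sim(u)]$ in $\K_1(\her(b))$ for every $u\in\mathcal{U}(\her(a)^\sim)$. Thus the left square commutes by construction.

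The real work is the right-hand square. First I would check that $\chi_{s_as_b}$ is even defined: since $a\lesssim_{\Cu}b$ we have $I_a\subseteq I_b$, hence $s_a\in I_b$ and therefore $s_a\lesssim_{\Cu}s_b$ (as $s_b$ is full in $I_b$), so there is a standard map $\theta_{s_as_b}\colon\her(s_a)=I_a\longrightarrow\her(s_b)=I_b$ and $\chi_{s_as_b}=\K_1(\theta_{s_as_b})$. The key point is that both routes around the square compute $\K_1$ of a standard map from $a$ to $s_b$. Indeed, a composite of standard maps is again a standard map (this is exactly what the transitivity argument for $\lesssim_1$ establishes, through \autoref{lma:invarK1inj}): along $a\lesssim_{\Cu}s_a\lesssim_{\Cu}s_b$ the map $\theta_{s_as_b}\circ\theta_{as_a}$ is standard, and along $a\lesssim_{\Cu}b\lesssim_{\Cu}s_b$ the map $\theta_{bs_b}\circ\theta_{ab}$ is standard, both from $\her(a)$ to $\her(s_b)$. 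Since by \autoref{lma:invarK1inj} any two standard maps between the fixed pair $a,s_b$ induce one and the same morphism $\chi_{as_b}$ at the $\K_1$-level, we conclude
\[
\chi_{s_as_b}\circ\chi_{as_a}=\chi_{as_b}=\chi_{bs_b}\circ\chi_{ab},
\]
which is the commutativity of the right-hand square.

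For the final ("in particular") assertion I would proceed as follows. If $s_{a'}$ is another strictly positive element of $I_a$, then $s_{a'}$ is full in $I_a$, so $\her(s_{a'})=I_a=\her(s_a)$ and, by the bijection between hereditary subalgebras and open projections recalled in \autoref{prop:viewequi}, the two elements share the same support projection $p:=p_{s_a}=p_{s_{a'}}$. Taking $\alpha:=p$ as the partial isometry realizing $p\sim_{PZ}p$, the associated standard map sends $d\longmapsto\alpha^*d\alpha=pdp=d$ for $d\in I_a$, since $p$ acts as a unit on $I_a\subseteq A^{**}$; hence $\theta_{s_as_{a'},p}=\id_{I_a}$ and $\chi_{s_as_{a'}}=\K_1(\id_{I_a})=\id_{\K_1(I_a)}$. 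Feeding this into the composition identity of the previous paragraph (with $s_{a'}$ in the role of $b$, i.e. along $\her(a)\to\her(s_a)=I_a\to\her(s_{a'})=I_a$) gives $\chi_{as_{a'}}=\chi_{s_as_{a'}}\circ\chi_{as_a}=\chi_{as_a}$, as claimed.

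The main obstacle is conceptual rather than computational, and it has in fact already been removed upstream: the entire argument rests on the twin facts that composites of standard maps are standard and that all standard maps between a fixed pair of positive elements agree on $\K_1$. The first comes out of the transitivity proof of $\lesssim_1$, the second is precisely \autoref{lma:invarK1inj}; once both are invoked, the commutativity is just bookkeeping. The only subtlety worth flagging explicitly is the well-definedness of $\chi_{s_as_b}$, i.e. the verification that $s_a\lesssim_{\Cu}s_b$, which is forced by the ideal inclusion $I_a\subseteq I_b$ coming from $a\lesssim_{\Cu}b$.
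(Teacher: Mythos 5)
Your proof is correct and follows essentially the same route as the paper: the left square commutes because $\chi_{ab}=\K_1(\theta_{ab}^\sim)$ by definition, and the right square follows from transitivity of $\lesssim_1$ via the identity $\chi_{s_as_b}\circ\chi_{as_a}=\chi_{as_b}=\chi_{bs_b}\circ\chi_{ab}$. The extra details you supply (well-definedness of $\chi_{s_as_b}$ from $I_a\subseteq I_b$, and the explicit verification of the ``in particular'' clause using $\theta_{s_as_{a'},p}=\id$) are correct and merely make explicit what the paper leaves implicit.
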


\begin{proof}
By definition, $\chi_{ab}:=\K_1(\theta_{ab}^\sim)$ and hence the left-square is commutative. Furthermore, by transitivity of $\lesssim_1$ (see \autoref{rmk:canoinj2}), we know that $\chi_{s_as_b}\circ\chi_{as_a}=\chi_{as_b}=\chi_{bs_b}\circ\chi_{ab}$. That is, the right square is commutative, which ends the proof.
\end{proof}

\begin{ntn}
\label{dfn:delta}
Let $A$ be a $\CatCa$-algebra of stable rank one such that $\Lat_f(A)=\{\sigma\text{-unital ideals of }A\}$. Let $a\in (A\otimes\mathcal{K})_+$ and let $s_a$ be any strictly positive element of $I_a$. By \autoref{prg:delta}, $\chi_{as_a}: \K_1(\her(a))\simeq \K_1(I_a)$ is a well-defined group isomorphism that does not depend on the strictly positive element $s_a$ chosen. We write $\delta_a:=\chi_{as_a}$.

Let $I,J\in\Lat_f(A)$ and let $s_I,s_J$ be any strictly positive elements of $I,J$ respectively. Suppose that  $I\subseteq J$ or, equivalently $[s_I]\leq [s_J]$ in $\Cu(A)$. By \autoref{prg:delta}, $\chi_{s_Is_J}: \K_1(I)\longrightarrow \K_1(J)$ is a well-defined group morphism that does not depend on the strictly positive elements chosen. We write $\delta_{IJ}:=\chi_{s_Is_J}$. 
Observe that $\delta_{IJ}=\K_1(i)$, where $i:I\hooklongrightarrow J$ is the canonical inclusion. In particular, $\delta_{II}=id_{\K_1(I)}$. 
\end{ntn}

\begin{prop}
\label{cor:deltas}
Let $A$ be a $\CatCa$-algebra of stable rank one such that $\Lat_f(A)=\{\sigma\text{-unital ideals of }A\}$. Let $a,b\in (A\otimes\mathcal{K})_+ $ such that $[a]\leq [b]$ in $\Cu(A)$. Let $u,v$ be unitary elements of $\her(a)^\sim,\her(b)^\sim$ respectively. We write $[u]:=[u]_{\K_1(\her(a))}$ and $[v]:=[v]_{\K_1(\her(b))}$. Then the following are equivalent: 

(i) $\theta_{ab}^\sim(u)\sim_h v$ in $\her(b) ^\sim$.

(ii) $\chi_{ab}([u])=[v]$ in $\K_1(\her(b))$.

(iii) $\delta_{I_aI_b}(\delta_a([u]))=\delta_b([v])$ in $\K_1(I_b)$, that is, $\delta_{I_aI_b}([u]_{\K_1(I_a)})=[v]_{\K_1(I_b)}$.
\end{prop}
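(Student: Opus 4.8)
The plan is to prove the two equivalences $(i)\Leftrightarrow(ii)$ and $(ii)\Leftrightarrow(iii)$ separately, the first resting on the stable rank one hypothesis and the second being a short diagram chase.

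First I would record the identification $\chi_{ab}([u])=[\theta_{ab}^\sim(u)]$ in $\K_1(\her(b))$, which is immediate from the definition $\chi_{ab}:=\K_1(\theta_{ab}^\sim)$ in \autoref{rmk:canoinj}; under this identification the horizontal arrows in the left-hand square of \autoref{prg:delta} are the quotient maps $w\mapsto[w]$, and condition $(ii)$ reads simply as $[\theta_{ab}^\sim(u)]=[v]$. The implication $(i)\Rightarrow(ii)$ is then automatic, since homotopic unitaries always represent the same $\K_1$-class. For the converse $(ii)\Rightarrow(i)$ I would use that $\her(b)$, being a hereditary subalgebra of the stable rank one algebra $A\otimes\mathcal{K}$, has stable rank one, and hence so does $\her(b)^\sim$ (as already invoked in \autoref{cor:existencelift}). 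Rieffel's theorem then gives that the natural map $\mathcal{U}(\her(b)^\sim)/\mathcal{U}_0(\her(b)^\sim)\longrightarrow\K_1(\her(b))$ is an \emph{isomorphism} (the $\K_1$-injectivity counterpart of the surjectivity cited in \cite[Theorem 2.10]{R87}); consequently two unitaries of $\her(b)^\sim$ sharing a $\K_1$-class lie in the same connected component of the unitary group, i.e. are homotopic. Applying this to $\theta_{ab}^\sim(u)$ and $v$ yields $(i)$.

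For $(ii)\Leftrightarrow(iii)$ I would chase the right-hand square of \autoref{prg:delta}, which asserts the identity $\delta_{I_aI_b}\circ\delta_a=\delta_b\circ\chi_{ab}$, where $\delta_a$ and $\delta_b$ are the group isomorphisms of \autoref{dfn:delta}. Assuming $(ii)$, applying the isomorphism $\delta_b$ and using this identity gives $\delta_{I_aI_b}(\delta_a([u]))=\delta_b(\chi_{ab}([u]))=\delta_b([v])$, which is exactly $(iii)$; conversely, $(iii)$ together with the injectivity of $\delta_b$ and the same identity forces $\chi_{ab}([u])=[v]$, recovering $(ii)$.

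The only genuine obstacle is the reverse implication $(ii)\Rightarrow(i)$: it is precisely here that stable rank one is indispensable, since this is what upgrades mere equality of $\K_1$-classes to an honest homotopy within $\her(b)^\sim$ itself, with no matrix stabilization required. Everything else is formal, and I expect the write-up of $(ii)\Leftrightarrow(iii)$ to amount to a one-line diagram chase.
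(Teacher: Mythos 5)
Your proposal is correct and follows essentially the same route as the paper: the equivalence $(ii)\Leftrightarrow(iii)$ is obtained in both cases by chasing the right-hand square of the diagram in \autoref{prg:delta} and using that $\delta_b$ is an isomorphism, while $(i)\Leftrightarrow(ii)$ rests on the identity $\chi_{ab}=\K_1(\theta_{ab}^\sim)$. The only difference is that the paper dismisses $(i)\Leftrightarrow(ii)$ as trivial, whereas you correctly point out that the direction $(ii)\Rightarrow(i)$ genuinely needs $\K_1$-injectivity of $\her(b)^\sim$ coming from stable rank one; making that explicit is a small but worthwhile improvement over the paper's write-up.
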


\begin{proof}
Since $\K_1(\theta_{ab}^\sim)=\chi_{ab}$, we trivially obtain that (i) is equivalent to (ii). 
Furthermore, by the right-square of the commutative diagram in \autoref{prg:delta}, we know that $\delta_{I_aI_b}\circ\delta_a([u])=\delta_b\circ\chi_{ab}([u])$. And since $\delta_b$ is an isomorphism, we obtain that (ii) is equivalent to (iii).  
\end{proof}

\vspace{0,5cm}\begin{cor}
\label{prg:deltas}
Let $A$ be a $\CatCa$-algebra of stable rank one and let $[(a,u)],[(b,v)]\in\Cu_1(A)$. Then $[(a,u)]\leq [(b,v)]$ in $\Cu_1(A)$ if and only if
\vspace{-0,6cm}\[
\left\{
    \begin{array}{ll}
    		[a]\leq[b] \text{ in } \Cu(A)\\
        \vspace{0cm}\delta_{I_aI_b}([u]_{\K_1(I_a)})=[v]_{\K_1(I_b)} \text{ in } \K_1(I_b)    \end{array}
\right.
\]
where $\delta_{I_aI_b}$ is as in \autoref{prg:delta}.
\end{cor}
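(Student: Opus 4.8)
The plan is to reduce the statement directly to the definition of the order on $\Cu_1(A)$ together with the equivalence already established in \autoref{cor:deltas}. First I would unwind the left-hand side: by \autoref{dfn:Cu1} the relation $[(a,u)]\leq[(b,v)]$ in $\Cu_1(A)$ means precisely $(a,u)\lesssim_1(b,v)$, which by \autoref{rmk:canoinj2} amounts to the two conditions $a\lesssim_{\Cu}b$ and $[\theta_{ab}^\sim(u)]=[v]$ in $\K_1(\her(b)^\sim)$. I would then record two harmless translations. On the one hand, $a\lesssim_{\Cu}b$ is by definition the same as $[a]\leq[b]$ in $\Cu(A)$. On the other hand, since $\K_1$ is insensitive to unitization one has $\K_1(\her(b)^\sim)=\K_1(\her(b))$, so the second condition reads $\chi_{ab}([u])=[v]$ in $\K_1(\her(b))$, where $\chi_{ab}=\K_1(\theta_{ab}^\sim)$ is the map introduced in \autoref{rmk:canoinj}. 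At this stage the left-hand side has been rewritten as the conjunction of $[a]\leq[b]$ and condition (ii) of \autoref{cor:deltas}.

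The second step is to feed the $\K_1$-condition into \autoref{cor:deltas}. Assuming $[a]\leq[b]$ in $\Cu(A)$, so that the standard map $\theta_{ab}$ and hence $\chi_{ab}$ are defined, that Proposition asserts the equivalence of $\chi_{ab}([u])=[v]$ (its condition (ii)) with $\delta_{I_aI_b}([u]_{\K_1(I_a)})=[v]_{\K_1(I_b)}$ (its condition (iii)). Substituting this equivalence into the characterization obtained in the first step yields exactly the two displayed conditions in the statement. The forward direction uses the implication (ii)$\Rightarrow$(iii) and the backward direction uses (iii)$\Rightarrow$(ii), both of which are furnished verbatim by \autoref{cor:deltas}, so the proof closes in both directions.

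There is essentially no obstacle here: the analytic content is already carried by \autoref{cor:deltas}, and what remains is pure bookkeeping. The only point deserving care is that the connecting map $\delta_{I_aI_b}$ and the isomorphisms $\delta_a,\delta_b$ of \autoref{dfn:delta} are defined only under the standing hypothesis that $\Lat_f(A)$ coincides with the set of $\sigma$-unital ideals of $A$; this hypothesis must therefore be carried over from \autoref{cor:deltas} so that the right-hand side of the equivalence is meaningful. With that caveat in place the corollary is an immediate reformulation of the order on $\Cu_1(A)$ in the lattice-of-ideals picture.
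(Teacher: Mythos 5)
Your proposal is correct and is exactly the argument the paper intends: the corollary is stated without proof precisely because it is the conjunction of the definition of $\lesssim_1$ with the equivalence (ii)$\Leftrightarrow$(iii) of \autoref{cor:deltas}, which is what you carry out. Your remark that the hypothesis $\Lat_f(A)=\{\sigma\text{-unital ideals of }A\}$ must be carried over for $\delta_{I_aI_b}$ to be defined is a valid and worthwhile observation, since the corollary as printed omits it.
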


We will now use all the above to get a new picture of the $\Cu_1$-semigroup and its elements. 

\begin{dfn}
\label{dfn:cufull}
Let $A$ be a $\CatCa$-algebra of stable rank one. Let $I\in\Lat_f(A)$ be an ideal of $A$ that contains a full positive element. We recall that $\Cu(I)$ is a singly-generated ideal of $\Cu(A)$. We also recall that for $x\in\Cu(A)$, we write $I_x:=\{y\in\Cu(A) \mid y\leq \infty x\} $ the ideal of $\Cu(A)$ generated by $x$. 

Define $\Cu_f(I):=\{[a] \in\Cu(A) \mid I_a=I\}$. Equivalently, $\Cu_f(I):=\{x\in \Cu(A) \mid I_x=\Cu(I)\}$. In other words, $\Cu_f(I)$ consists of the elements of $\Cu(A)$ that are full in $\Cu(I)$.
\end{dfn}

One could define $\Cu_f(I)$ for any ideal $I\in\Lat(A)$. However, it is easily seen that $\Cu_f(I)\neq\emptyset$ if and only if $I\in\Lat_f(A)$. We also mention that whenever $A$ is separable, we have that $\Lat_f(A)=\Lat(A)$.

For notational purposes, we will indistinguishably use $I_a$ or $I_{[a]}$, referring to one or the other; see \autoref{prg:latticecu}. For instance, we might consider objects such as $\delta_{I_xI_y}$ or $\K_1(I_x)$, where $x,y\in\Cu(A)$, when we really mean $\delta_{I_aI_b}$ or $\K_1(I_a)$, where $a,b\in (A\otimes\mathcal{K})_+$ are representatives of $x,y$ respectively.

\begin{dfn}
\label{dfn:newpicture}
Let $A$ be a $\CatCa$-algebra of stable rank one such that $\Lat_f(A)=\{\sigma\text{-unital ideals of }A\}$. Let us consider 
\vspace{-0,3cm}\[
\hspace{0cm}S:= \bigsqcup\limits_{I\in\Lat_f(A)} \Cu_f(I)\times \K_1(I).
\vspace{0,1cm}\] 
We equip $S$ with addition and order as follows: For any $(x,k)\in \Cu_f(I_x)\times \K_1(I_x)$ and $(y,l)\in\Cu_f(I_y)\times \K_1(I_y)$, then \[
\left\{\begin{array}{ll}
(x,k)\leq (y,l) \text{ if: } x\leq y \text{ and } \delta_{I_xI_{y}}(k)=l.\\
(x,k)+(y,l)=(x+y,\delta_{I_xI_{x+y}}(k)+\delta_{I_yI_{x+y}}(l)).
\end{array}\right.
\]
\end{dfn}

\begin{lma}
\label{lma:PoMisoCu}
Let $S$ be a $\Cu^\sim$-semigroup and let $T$ be a $\CatoM$. Let $f:S\longrightarrow T$ be a $\CatoM$-isomorphism. Then, $T$ is a $\Cu^\sim$-semigroup and $f$ is a $\Cu^\sim$-isomorphism. A fortiori, $S\simeq T$ as $\Cu^\sim$-semigroups.
\end{lma}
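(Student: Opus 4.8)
The plan is to argue by transport of structure. Since $f$ is a $\CatoM$-isomorphism, its inverse $f^{-1}$ is also a $\CatoM$-morphism, so $f$ is not merely order-preserving but order-reflecting: $x\leq y$ in $S$ if and only if $f(x)\leq f(y)$ in $T$. Combined with the fact that $f$ is a monoid morphism (so $f(0)=0$ and $f(x+y)=f(x)+f(y)$), this order-isomorphism property is strong enough to carry every order-theoretic feature of $S$ over to $T$. I will verify the axioms (O1)--(O4) and $0\ll 0$ for $T$ one at a time, at each stage transporting the relevant data back along $f^{-1}$, invoking the corresponding axiom in $S$, and pushing the conclusion forward along $f$.

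First I would establish (O1) for $T$ and, simultaneously, that $f$ preserves suprema of increasing sequences. Given an increasing sequence $(t_n)_n$ in $T$, the sequence $(f^{-1}(t_n))_n$ is increasing in $S$ by order-reflection, so it has a supremum $s$ by (O1) in $S$; then $f(s)=\sup_n t_n$. Indeed $f(s)\geq t_n$ for all $n$ since $f$ is order-preserving, and if $t\geq t_n$ for all $n$ then $f^{-1}(t)\geq f^{-1}(t_n)$ for all $n$, whence $f^{-1}(t)\geq s$ and $t\geq f(s)$. Thus (O1) holds in $T$, and $f$ (symmetrically, $f^{-1}$) preserves suprema of increasing sequences.

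Once (O1) is available in both semigroups, the compact-containment relation $\ll$ is defined in each, and the previous step lets me show that $f$ preserves and reflects $\ll$: unwinding the definition of $x\ll y$, which quantifies over increasing sequences possessing suprema, and matching such sequences in $S$ and $T$ bijectively via $f$ and $f^{-1}$ (each preserving suprema), gives $x\ll y$ in $S$ if and only if $f(x)\ll f(y)$ in $T$. With this in hand, (O2) transports directly (a $\ll$-increasing sequence with supremum $f^{-1}(t)$ in $S$ maps to a $\ll$-increasing sequence with supremum $t$ in $T$); (O3) follows by reflecting the hypotheses $f^{-1}(x_i)\ll f^{-1}(y_i)$ along $f^{-1}$, applying (O3) in $S$, and using that $f$ is additive and preserves $\ll$; and (O4) follows analogously, using that $f$ is additive and preserves suprema. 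Finally $0\ll 0$ in $T$ is immediate from $0\ll 0$ in $S$, $f(0)=0$, and preservation of $\ll$. This shows $T\in\Cu^\sim$.

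It then remains to note that $f$ is a $\Cu^\sim$-isomorphism. By construction $f$ is a $\CatoM$-morphism that preserves suprema of increasing sequences and the compact-containment relation, hence a $\Cu^\sim$-morphism; by the symmetry of the arguments above the same three properties hold for $f^{-1}$, so $f^{-1}$ is also a $\Cu^\sim$-morphism and $f$ is an isomorphism in $\Cu^\sim$. The argument is routine transport of structure; the only point requiring care is the bookkeeping in the second and third steps, namely that $\ll$ must not be referenced for $T$ until (O1) has been secured there, and that the transport of $\ll$ rests essentially on $f$ matching up increasing sequences and their suprema in \emph{both} directions.
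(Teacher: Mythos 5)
Your proof is correct and follows the same route as the paper: the paper's own argument is precisely this transport of structure, noting that suprema and the compact-containment relation are determined by the order and hence carried over by the surjective order-embedding $f$, with the verification of (O1)--(O4) left as routine. Your write-up merely makes explicit the bookkeeping (establishing (O1) and preservation of suprema before invoking $\ll$ in $T$) that the paper leaves to the reader.
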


\begin{proof}
We recall that suprema and the compact-containment relation are entirely determined by the order-structure. Thus, existence of suprema and axioms (O1)-(O4) in $T$ are directly obtained from the surjective order-embedding $f$. More concretely, for any increasing sequence $(t_k)_k$ in $T$, we have that $\sup\limits_k t_k=f(\sup\limits_k s_k)$ where $s_k$ is the (unique) element in $S$ such that $f(s_k)=t_k$. It is now routine to check that $T$ is a $\Cu^\sim$-semigroup and that $f$ is a $\Cu^\sim$-isomorphism.
\end{proof}

\begin{thm}
\label{prop:newpicture}
Let $A$ be a $\CatCa$-algebra of stable rank one such that $\Lat_f(A)=\{\sigma\text{-unital ideals of }A\}$. \\Let $(S,+,\leq)$ be the object defined in \autoref{dfn:newpicture}. Then $(S,+,\leq)$ is a $\Cu^\sim$-semigroup and the following map is a $\Cu^\sim$-isomorphism:
\vspace{-0,1cm}\[
	\begin{array}{ll}
		\xi:\Cu_1(A) \longrightarrow S\\
		\hspace{0,5cm}[(a,u)] \longmapsto ([a],\delta_a([u]))
	\end{array}
\] where $[a]:=[a]_{\Cu(A)}$ and $[u]:=[u]_{\K_1(\her(a))}$. 
\end{thm}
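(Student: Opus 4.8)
The plan is to build $\xi$ at the level of representatives, show it is a bijective $\CatoM$-isomorphism onto $S$, and then let \autoref{lma:PoMisoCu} do the rest: that lemma simultaneously endows $S$ with its $\Cu^\sim$-structure and upgrades a $\CatoM$-isomorphism to a $\Cu^\sim$-isomorphism, so once $\xi$ is known to be a surjective order-embedding that respects addition, we are done. First I would record that $(S,+,\leq)$ of \autoref{dfn:newpicture} is a partially ordered monoid; antisymmetry of $\leq$ on $S$ follows from antisymmetry in $\Cu(A)$ together with $\delta_{II}=\id$. For a pair $(a,u)$ with $a\in(A\otimes\mathcal{K})_+$ and $u\in\mathcal{U}(\her(a)^\sim)$, set $\tilde{\xi}(a,u):=([a],\delta_a([u]))$, which lands in $\Cu_f(I_a)\times\K_1(I_a)\subseteq S$ because $a$ is full in $I_a$ and $\delta_a\colon\K_1(\her(a))\xrightarrow{\ \simeq\ }\K_1(I_a)$ is the isomorphism of \autoref{dfn:delta}. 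Note $\tilde{\xi}(0_A,1_{\mathbb{C}})$ is the neutral element of $S$, so the identity is preserved.

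The order on $S$ was tailored to match \autoref{prg:deltas}: unwinding \autoref{dfn:newpicture}, the inequality $\tilde{\xi}(a,u)\leq\tilde{\xi}(b,v)$ in $S$ says exactly $[a]\leq[b]$ in $\Cu(A)$ and $\delta_{I_aI_b}([u]_{\K_1(I_a)})=[v]_{\K_1(I_b)}$, which by \autoref{prg:deltas} is precisely $(a,u)\lesssim_1(b,v)$. Reading this in both directions shows that $\tilde{\xi}$ descends to a well-defined order-embedding $\xi\colon\Cu_1(A)\to S$, and in particular $\xi$ is injective. For surjectivity, take $(x,k)\in\Cu_f(I)\times\K_1(I)$ and a representative $a\in(A\otimes\mathcal{K})_+$ of $x$, so $I_a=I$. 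Since $\delta_a$ is an isomorphism, $k=\delta_a([u_0])$ for some $[u_0]\in\K_1(\her(a))$; as $A$, and hence the hereditary subalgebra $\her(a)$, has stable rank one, $\K_1$-surjectivity (\cite[Theorem 2.10]{R87}) yields a unitary $u\in\her(a)^\sim$ with $[u]_{\K_1(\her(a))}=[u_0]$, whence $\xi([(a,u)])=(x,k)$.

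The substantive step, and the one I expect to be the main obstacle, is additivity. Writing $I_{a+b}=I_a\vee I_b$, I must check that the second coordinate of $\xi([(a\oplus b,u\oplus v)])$, namely $\delta_{a\oplus b}([u\oplus v])$, equals $\delta_{I_aI_{a+b}}(\delta_a[u])+\delta_{I_bI_{a+b}}(\delta_b[v])$. The block unitary $u\oplus v=\psi^\sim(\begin{smallmatrix} u & 0\\ 0 & v\end{smallmatrix})$ is the image of $\diag(u,v)$ under the corner embedding $\her(a)\oplus\her(b)\hookrightarrow\her(a\oplus b)$ induced by $\psi$, so additivity of $\K_1$ on direct sums gives $[u\oplus v]=\K_1(j_a)[u]+\K_1(j_b)[v]$ in $\K_1(\her(a\oplus b))$, where $j_a,j_b$ are the two corner inclusions. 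The care here lies in verifying that $j_a,j_b$ are genuinely \emph{standard maps} in the sense of \autoref{rmk:canoinj}: since $a\oplus 0\leq a\oplus b$ with $a\sim_{\Cu}a\oplus 0$ (and symmetrically for $b$), $j_a$ factors as a corner Cuntz-equivalence followed by a canonical inclusion, hence is standard by composition, so $\K_1(j_a)=\chi_{a,a\oplus b}$ and $\K_1(j_b)=\chi_{b,a\oplus b}$. Transporting through $\delta_{a\oplus b}$ and applying the naturality square of \autoref{prg:delta}, which gives $\delta_{a\oplus b}\circ\chi_{a,a\oplus b}=\delta_{I_aI_{a+b}}\circ\delta_a$ and likewise for $b$, then produces the desired identity. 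This shows $\xi$ respects addition.

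Having established that $\xi$ is a bijective, additive order-embedding, it is a $\CatoM$-isomorphism. By \autoref{lma:PoMisoCu}, $S$ is then automatically a $\Cu^\sim$-semigroup and $\xi$ is a $\Cu^\sim$-isomorphism, which completes the proof.
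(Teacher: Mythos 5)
Your proposal is correct and follows essentially the same route as the paper: well-definedness and the order-embedding via \autoref{cor:deltas}/\autoref{prg:deltas}, surjectivity via $\K_1$-surjectivity in stable rank one, additivity via \autoref{prg:delta}, and the final upgrade to a $\Cu^\sim$-isomorphism via \autoref{lma:PoMisoCu}. The only difference is that you spell out the additivity step (identifying the corner inclusions as standard maps and invoking $\K_1$-additivity on direct sums) in more detail than the paper, which simply cites \autoref{prg:delta} for it.
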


\begin{proof}
By \autoref{dfn:delta} and \autoref{dfn:cufull}, the map $\Cu_1(A) \longrightarrow \bigsqcup\limits_{I\in\Lat_f(A)} \Cu_f(I)\times \K_1(I)
$ is well-defined. Further, by construction, addition and order are well-defined in $S$. Now let $a\in (A\otimes\mathcal{K})_+$. Since $A$ has stable rank one, then so has $\her(a)$. Hence, by $\K_1$-surjectivity, we know that any element of $\K_1(\her(a))$ lifts to a unitary in $\her(a)^\sim$ and that any two of those lifts are homotopic. Also $\delta_a$ is an isomorphism and obviously any two representatives of $x$ in $(A\otimes\mathcal{K})_+$ are Cuntz equivalent. Thus for any $(x,k)\in\Cu(A)\times \K_1(I_x)$, there exist $a\in (A\otimes\mathcal{K})_+ $ and $u\in\mathcal{U}(\her(a)^\sim)$ such that $[a]=x$ and $\delta_a[u]=k$. Moreover, for any other lift $(a',u')$, we have $[(a',u')]=[(a,u)]$. So we conclude that $\xi$ is a set bijection.

Now, using \autoref{cor:deltas} and \autoref{prg:deltas}, we know that $[(a,u)]\leq[(b,v)]$ if and only if $\xi([(a,u)])\leq \xi([(b,v)])$. Moreover, using \autoref{prg:delta}, we have $\xi([(a,u)]+[(b,v)])=\xi([(a,u)])+\xi([(b,v)])$. In the end, we obtain that $\xi$ is a $\CatoM$-isomorphism. We finally conclude that $S$ is a $\Cu^\sim$-semigroup and that $\xi$ is a $\Cu^\sim$-isomorphism using \autoref{lma:PoMisoCu}.
\end{proof}

In this new picture, the positive elements of $\Cu_1(A)$ can be identified with $\{(x,0)\mid x\in\Cu(A)\}$ (see \autoref{prop:nu+}). In other words, $\Cu_1(A)_+\simeq \Cu(A)$ as $\Cu$-semigroups. We will end this part by describing morphisms from $\Cu_1(A)$ to $\Cu_1(B)$ in this new viewpoint of our invariant.

\begin{lma}
\label{lma:newpicturemorph}
Let $A,B$ be $\CatCa$-algebras of stable rank one such that $\Lat_f(A)=\{\sigma\text{-unital ideals of }A\}$. \\Let $I\in\Lat_f(A)$ and let $\phi: A\longrightarrow B$ be a $^*$-homomorphism. Write $J:=\overline{B\phi(I)B}$ the smallest ideal of $B$ containing $\phi(I)$. Also write $\alpha:=\Cu_1(\phi),\alpha_0:=\Cu(\phi)$ and $\alpha_I:=\K_1(\phi_{|I})$, where $\phi_{|I}:I\overset{\phi}\longrightarrow J$.

(i) For any $x\in\Cu_f(I)$, we have $\alpha_0(x)\in\Cu_f(J)$. That is, $I_{\alpha_0(x)}=\Cu(J)$ is the smallest ideal of $\Cu(B)$ containing $\alpha_0(\Cu(I))$ and $\Cu(J)\in\Lat_f(\Cu(B))$. 

(ii) For any $(x,k)$ with $x\in\Cu_f(I)$ and $k\in \K_1(I)$, we have $\alpha(\xi^{-1}(x,k))=(\alpha_0(x),\alpha_I(k))$, where $\xi_A, \xi_B$ are the $\Cu^\sim$-isomorphism as in \autoref{prop:newpicture} for $A,B$ respectively. 
\end{lma}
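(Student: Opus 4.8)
The proof is a diagram chase resting on the functoriality of $\Cu$, of $\K_1$, and of the standard-map construction; throughout I would write $\phi$ also for $\phi\otimes\id_{\mathcal{K}}$ and $\phi^\sim$ for its unitization, exactly as in \autoref{prop:cu1functor}. For part (i), the plan is to fix $a\in(A\otimes\mathcal{K})_+$ with $[a]=x$; since $x\in\Cu_f(I)$, the element $a$ is full in $I$, that is $I_a=\overline{AaA}=I$. As $\alpha_0(x)=[\phi(a)]$, it then suffices to prove $\overline{B\phi(a)B}=J$. The inclusion $\overline{B\phi(a)B}\subseteq J$ is immediate from $\phi(a)\in\phi(I)$. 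For the reverse inclusion, continuity of $\phi$ gives $\phi(I)=\phi(\overline{AaA})\subseteq\overline{\phi(A)\phi(a)\phi(A)}\subseteq\overline{B\phi(a)B}$, and since $\overline{B\phi(a)B}$ is a closed two-sided ideal this forces $J=\overline{B\phi(I)B}\subseteq\overline{B\phi(a)B}$. Hence $I_{\phi(a)}=J$, i.e. $\alpha_0(x)\in\Cu_f(J)$, and $\Cu(J)=I_{\alpha_0(x)}$ is singly-generated, so $\Cu(J)\in\Lat_f(\Cu(B))$. That $\Cu(J)$ is the smallest ideal containing $\alpha_0(\Cu(I))$ follows because $\alpha_0$ preserves order and suprema, whence $\alpha_0(\Cu(I))=\alpha_0(I_x)\subseteq I_{\alpha_0(x)}=\Cu(J)$, while $\Cu(J)$ is generated by $\alpha_0(x)\in\alpha_0(\Cu(I))$.

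For part (ii), I would lift $(x,k)$ to a pair $(a,u)$ with $a\in(A\otimes\mathcal{K})_+$, $[a]=x$, $u\in\mathcal{U}(\her(a)^\sim)$ and $\delta_a([u])=k$, so that $\xi_A^{-1}(x,k)=[(a,u)]$ and $\alpha([(a,u)])=[(\phi(a),\phi^\sim(u))]$. Applying $\xi_B$, which is legitimate since $J\in\Lat_f(B)$ is $\sigma$-unital by (i) together with the standing hypothesis, gives $\xi_B(\alpha(\xi_A^{-1}(x,k)))=([\phi(a)],\delta_{\phi(a)}([\phi^\sim(u)]))$. The first coordinate equals $\alpha_0(x)$ by functoriality of $\Cu$, so the only point requiring work is the identity $\delta_{\phi(a)}([\phi^\sim(u)])=\alpha_I(k)$ in $\K_1(J)$.

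The key observation is that the square of $^*$-homomorphisms
\[
\xymatrix{
\her(a)\ar[r]^{i_a}\ar[d]_{\phi} & I\ar[d]^{\phi}  \\
\her(\phi(a))\ar[r]_{i_{\phi(a)}} & J
}
\]
commutes on the nose, where $i_a,i_{\phi(a)}$ are the canonical inclusions (available because $\her(a)\subseteq I_a=I$ and $\her(\phi(a))\subseteq I_{\phi(a)}=J$) and the vertical arrows are the corestrictions of $\phi$: both composites send $d\in\her(a)$ to $\phi(d)\in J$. Recalling from \autoref{dfn:delta} and the paragraph preceding \autoref{prg:delta} that $\delta_a=\K_1(i_a)$ and $\delta_{\phi(a)}=\K_1(i_{\phi(a)})$, that $[\phi^\sim(u)]=\K_1(\phi_{|\her(a)})([u])$, and that $\alpha_I=\K_1(\phi_{|I})$, applying the functor $\K_1$ to the square yields $\delta_{\phi(a)}\circ\K_1(\phi_{|\her(a)})=\alpha_I\circ\delta_a$. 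Evaluating at $[u]$ gives $\delta_{\phi(a)}([\phi^\sim(u)])=\alpha_I(\delta_a([u]))=\alpha_I(k)$, which completes the computation.

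I expect the genuinely delicate points to be not the chase itself but the well-definedness bookkeeping: verifying that $J$ is $\sigma$-unital so that $\xi_B$ and $\delta_{\phi(a)}$ are meaningful (this is why part (i) must precede part (ii)), and checking that the inclusions $\her(a)\subseteq I$ and $\her(\phi(a))\subseteq J$ indeed realize standard maps, so that $\delta_a$ and $\delta_{\phi(a)}$ may be identified with $\K_1$ of honest inclusions as recorded in \autoref{dfn:delta}. Once these identifications are in place, everything reduces to the fullness argument of (i) and a single application of the functoriality of $\K_1$ to a strictly commuting square.
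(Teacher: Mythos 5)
Your proof is correct; the only place it genuinely diverges from the paper is in part (i). There, the paper never chooses a representative $a$: it works entirely inside the ideal lattice of $\Cu(B)$, quoting \autoref{prg:latticecu} and functoriality to know that $\Cu(J)$ is the smallest ideal of $\Cu(B)$ containing $\alpha_0(\Cu(I))$, then sandwiching $I_{\alpha_0(x)}\subseteq \Cu(J)\subseteq I_{\alpha_0(x)}$ using that $x$ is full in $\Cu(I)$ and the minimality of $\Cu(J)$. You instead prove the underlying $\CatCa$-algebraic identity $\overline{B\phi(a)B}=J$ for a full representative $a$ by a direct norm-closure computation. Both are sound; the paper's version buys brevity and stays at the level of the invariant (reusing the lattice isomorphism $\Lat(B)\simeq\Lat(\Cu(B))$ already established), while yours is more elementary and self-contained, at the cost of re-deriving by hand a fact the lattice isomorphism already encodes. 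For part (ii) you follow the paper's computation, but you make explicit the commuting square $\her(a)\to I$, $\her(\phi(a))\to J$ and the identification $\delta_a=\K_1(i_a)$ from \autoref{dfn:delta}; this is precisely the step the paper's displayed chain of equalities leaves implicit in its last line, so your write-up is, if anything, more complete. Your attention to why $\xi_B$ and $\delta_{\phi(a)}$ are even defined (i.e. that $J$ is $\sigma$-unital, which needs the standing hypothesis read as applying to $B$ as well) is a legitimate bookkeeping point that the paper glosses over.
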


\begin{proof}
By functoriality of $\Cu$ and \autoref{prg:latticecu}, we know that $\Cu(J)$ is the smallest ideal of $\Cu(B)$ that contains $\alpha_0(\Cu(I))$. Now let $x\in\Cu_f(I)$. Then $\alpha_0(x)\in\alpha_0(\Cu(I))$. Hence $I_{\alpha_0(x)}\subseteq\Cu(J)$. However, since  $x$ is full in $\Cu(I)$, we have $\alpha_0(\Cu(I))\subseteq I_{\alpha_0(x)}$. By minimality of $\Cu(J)$ we deduce that $I_{\alpha_0(x)}=\Cu(J)$, that is, $\alpha_0(x)\in \Cu_f(J)$, which proves (i).

(ii) Let $(x,k)$ be an element of $\Cu_1(A)$, where $x\in\Cu(A)$ and $k\in \K_1(I_x)$. Let $(a,u)$ be a representative of $(x,k)$, that is, $\xi([(a,u)])=(x,k)$. We know that
\begin{align*}
\alpha(\xi^{-1}(x,k))&=\alpha([a,u])\\
			  &=[(\phi(a)),\phi^\sim(u))]\\		
			  &=([\phi(a)]_{\Cu(B)},\delta_{\phi(a)}([\phi^\sim(u)]_{\K_1(\her(\phi(a))^\sim)}))\\
			  &= ([\phi(a)]_{\Cu(B)},[\phi^\sim(u)]_{\K_1(I_{\phi(a)}^\sim)})
\end{align*}
Hence $\alpha(\xi^{-1}(x,k))=(\alpha_0(x),\alpha_I(k))$ as desired.
\end{proof}

\begin{ntn}\label{prg:newpicture}
Whenever convenient, and many times in the sequel, we will describe elements of $\Cu_1(A)$ as a pair $(x,k)$ where $x\in \Cu(A)$ and $k\in \K_1(I_x)$, whenever $A$ is a separable $\CatCa$-algebra of stable rank one. Again, we may describe morphisms $\alpha:=\Cu_1(\phi)$ from $\Cu_1(A)$ to $\Cu_1(B)$, as pairs $\alpha:=(\alpha_0,\{\alpha_I\}_{I\in\Lat_f(A)})$, where $\alpha_0:=\Cu(\phi)$ and $\alpha_I:=\K_1(\phi_{|I})$.
\end{ntn}

We now compute the $\Cu_1$-semigroup in some specific settings. In the process, we will remind the reader about lower semicontinuous functions which play a key role in the computation of $\Cu$-semigroups of certain $\CatCa$-algebras. 

\begin{prg} \textbf{(Lower semicontinuous functions.)}
\label{prg:lsc}
Let $X$ be a topological space and $S$ be a $\Cu$-semigroup. Let $f:X\longrightarrow S$ be a map. We say that $f$ is \emph{lower semicontinuous} if for any $s\in S$, the set $\{t\in X \mid s\ll f(t)\}$ is open in $X$. We write $\Lsc(X,S)$ for the set of lower-semicontinuous functions from $X$ to $S$. 

Also, we recall that if $A$ is a separable $\CatCa$-algebra of stable rank one such that $\K_1(I)=0$ for every ideal of $A$ and $X$ is a locally compact Hausdorff space that is second countable and of covering dimension at most one, then $\Cu(\mathcal{C}_0(X)\otimes A)\simeq \Lsc(X,\Cu(A))$; see \cite[Theorem 3.4]{AntoinePereraSantiago11}.

Finally, $V\longmapsto I_{1_{V}}$ defines a one-to-one correspondence between the open subsets of $X$, that we write $\mathcal{O}(X)$, and the ideals of $\Lsc(X,\overline{\N})$. Note that for any $f\in\Lsc(X,\overline{\N})$, $I_f:=I_{\supp (f)}$, where $\supp (f):=\{x\in X \mid f(x)\neq 0\}$ is an open set of $X$.
\end{prg}
\vspace{-0,2cm}
\subsection{The simple case}

\label{prop:computesimple}
Let $A$ be a simple $\sigma$-unital $\CatCa$-algebra of stable rank one. Then $\Cu_1(A)$ can be described in terms of $\Cu(A)$ and $\K_1(A)$ as follows: 
\vspace{-0,1cm}\[
	\begin{array}{ll}
		\Cu_1(A)\overset{\simeq}{\longrightarrow} ((\Cu(A)\setminus\{0\})\times \K_1(A))\sqcup \{0\}\\
		\hspace{0,3cm} (x,k)\longmapsto \left\{\begin{array}{ll} 0 \text{ if } x=0\\
				(x,k) \text{ otherwise }
				\end{array}
			\right.
	\end{array}
\]
\begin{proof}
Since $A$ is simple, we know that $\Lat(A)=\{0,A\}$. Therefore, in the description of the $\Cu_1$-semigroup of \autoref{prg:newpicture}, we have $\Cu_f(\{0\})=\{0\}$ and $\Cu_f(A)=\Cu(A)\setminus\{0\}$. The result follows. 
\end{proof}

\subsection{The case of no \texorpdfstring{$\K_1$}{K1}-obstructions}

\begin{dfn}
We say that a $\CatCa$-algebra $A$ has \emph{no $\K_1$-obstructions}, if $A$ has stable rank one and $\K_1(I)$ is trivial for any $I\in\Lat(A)$.
\end{dfn}

\begin{prop}
Let $A$ be a $\CatCa$-algebra with no $\K_1$-obstructions such that $\Lat_f(A)=\{\sigma\text{-unital ideals of }A\}$. Then $\Cu_1(A)\simeq \Cu(A)$. In particular, for any separable $\AF$ algebra $A$, $\Cu_1(A)\simeq \Cu(A)$.
\end{prop}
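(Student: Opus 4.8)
The plan is to reduce to the alternative description of $\Cu_1(A)$ established in \autoref{prop:newpicture} and to observe that the $\K_1$-information degenerates. By hypothesis $A$ has no $\K_1$-obstructions, so $\K_1(I)=0$ for every $I\in\Lat(A)$, and in particular for every $I\in\Lat_f(A)$. Plugging this into the semigroup $S=\bigsqcup_{I\in\Lat_f(A)}\Cu_f(I)\times\K_1(I)$ of \autoref{dfn:newpicture}, each factor $\K_1(I)$ is the trivial group, so every element of $S$ has the form $(x,0)$.

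First I would record the set-level identification $S\simeq\Cu(A)$. The sets $\Cu_f(I)$, as $I$ runs over $\Lat_f(A)$, partition $\Cu(A)$: any $x\in\Cu(A)$ is full in the ideal $I_x$ it generates, so $x\in\Cu_f(I_x)$, while the first coordinate of a point of $S$ pins down its index $I$ uniquely. Hence the map $f\colon S\to\Cu(A)$, $(x,0)\mapsto x$, is a bijection sending the neutral element to the neutral element.

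Next I would check that $f$ preserves order and addition. In the order of \autoref{dfn:newpicture}, the relation $(x,0)\le(y,0)$ amounts to $x\le y$ together with $\delta_{I_xI_y}(0)=0$, and the latter holds automatically; likewise $(x,0)+(y,0)=(x+y,0)$ since both $\K_1$-contributions vanish. Thus $f$ is a $\CatoM$-isomorphism onto $\Cu(A)$, regarded as an object of $\Cu^\sim$ via the inclusion $\Cu\subseteq\Cu^\sim$. Since $S\in\Cu^\sim$ by \autoref{prop:newpicture}, \autoref{lma:PoMisoCu} promotes $f$ to a $\Cu^\sim$-isomorphism, and combining with the identification $\Cu_1(A)\simeq S$ yields $\Cu_1(A)\simeq\Cu(A)$.

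For the final assertion I would simply verify that a separable $\AF$ algebra meets both hypotheses: $\AF$ algebras have stable rank one and all their ideals are again $\AF$, hence have vanishing $\K_1$, so $A$ has no $\K_1$-obstructions; and separability forces every closed ideal to be $\sigma$-unital and gives $\Lat_f(A)=\Lat(A)$, whence $\Lat_f(A)=\{\sigma\text{-unital ideals of }A\}$. The statement then follows from the first part. I expect no genuine obstacle here; the only points requiring care are the partition identity $\bigsqcup_I\Cu_f(I)=\Cu(A)$ and the observation that the order and addition of the new picture collapse to those of $\Cu(A)$ once the $\K_1$-groups are trivial.
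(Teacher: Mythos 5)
Your proposal is correct and follows essentially the same route as the paper: both invoke the alternative picture of \autoref{prop:newpicture} and observe that the triviality of every $\K_1(I)$ collapses the semigroup $\bigsqcup_{I\in\Lat_f(A)}\Cu_f(I)\times\K_1(I)$ onto $\Cu(A)$. Your write-up merely fills in the details (the partition $\bigsqcup_I\Cu_f(I)=\Cu(A)$, the use of \autoref{lma:PoMisoCu}, and the verification of the hypotheses for separable $\AF$ algebras) that the paper leaves implicit.
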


\begin{proof}
By assumption, we know that $\K_1(I)$ is trivial for any $I\in\Lat(A)$. Therefore, using again the description of the $\Cu_1$-semigroup of \autoref{prop:newpicture} (see \autoref{prg:newpicture}), we have $\Cu_1(A)\simeq \Cu(A)\times\{0\}$. The result follows. 
\end{proof}

\subsection{\texorpdfstring{$\AI$}{AI} and \texorpdfstring{$\A\!\T$}{AT} algebras: The case of \texorpdfstring{$\mathcal{C}([0,1])$}{C([0,1])} and \texorpdfstring{$\mathcal{C}(\T)$}{C(T)}}
Here we compute the $\Cu_1$-semigroup of the interval algebra and the circle algebra. Using the continuity of $\Cu_1$, we also give an explicit computation of the $\Cu_1$-semigroup of $\AI$-algebras (respectively $\A\!\T$-algebras), constructed as the tensor product of the interval algebra (respectively the circle algebra) with any UHF algebra of infinite type. 

\begin{ntn}
\label{prg:disjointunioninterval}
Let $X$ be the interval or the circle and let $f\in\Lsc(X,\overline{\N})$. The open set $V_f:=\supp f$ of $X$ can be (uniquely) decomposed into a countable disjoint union $\{V_k\}_k$ of open arcs of $X$. In other words, $V_f = \bigsqcup\limits_{k=1}^{n_f}V_k$, for some $n_f\in\overline{\N}$, where $\{V_k\}_{k\geq 1}$ are pairwise disjoint open arcs of $X$. For the specific case of the interval, we also define $m_f:=n_{f}-(1_{V_f}(0)+ 1_{V_f}(1))$. That is, $m_f$ is the number of open intervals of the decomposition $\{V_k\}_{k\geq 1}$ of $V_f$ that are strictly contained in $]0,1[$. (Therefore $m_f$ also belongs to $\overline{\N}$.)
\end{ntn}

\textbf{The $\mathcal{C}([0,1])$ case.}

\begin{lma}
\label{lma:computeinterval}
Let $I\in\Lat(\mathcal{C}([0,1]))$ and let $f_I:=1_{V_I}$ be the indicator map on the unique open set $V_I$ of $[0,1]$ corresponding to $I$. We have \vspace{-0,6cm}\[
\left\{\begin{array}{ll}
\Cu_f(I)\simeq \Lsc(V_I,\overline{\N}_*).\\
 \K_1(I)\simeq\underset{m_{f_I}}{\oplus}\Z.
\end{array}\right.
\]
\end{lma}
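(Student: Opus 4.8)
The plan is to establish the two isomorphisms separately, the first living in the positive cone and the second in the $\K_1$-layer.

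For the identification $\Cu_f(I)\simeq\Lsc(V_I,\overline{\N}_*)$, I would begin from the description $\Cu(\mathcal{C}([0,1]))\simeq\Lsc([0,1],\overline{\N})$ furnished by \autoref{prg:lsc}, applied with $A=\mathbb{C}$ and $X=[0,1]$: the hypotheses hold since $[0,1]$ is compact, second countable and of covering dimension one, $\mathbb{C}$ has stable rank one, and $\K_1$ of every ideal of $\mathbb{C}$ vanishes. Under this identification each class $[a]$ corresponds to a function $g_a\in\Lsc([0,1],\overline{\N})$, and by the last assertion of \autoref{prg:lsc} the ideal $I_a$ it generates corresponds to the open set $\supp(g_a)$. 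Thus, by \autoref{dfn:cufull}, $[a]\in\Cu_f(I)$ exactly when $\supp(g_a)=V_I$; equivalently, $g_a$ vanishes off $V_I$ and is $\geq 1$ throughout $V_I$, i.e. takes values in $\overline{\N}_*:=\overline{\N}\setminus\{0\}$ on $V_I$. Restriction to $V_I$ then yields the desired bijection onto $\Lsc(V_I,\overline{\N}_*)$, with inverse extension by zero; I would point out that this extension is lower semicontinuous precisely because $V_I$ is open and the function is bounded below by $1$ on $V_I$. Since order and addition are pointwise and the support of a sum in $\overline{\N}$ is the union of supports (so $\Cu_f(I)$ is closed under addition), this restriction map respects order and addition.

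For the computation of $\K_1(I)$, I would use that $I$ is $^*$-isomorphic to $\mathcal{C}_0(V_I)$ and decompose $V_I$ into its connected components, the pairwise disjoint open arcs $\{V_k\}$ of \autoref{prg:disjointunioninterval}, giving a $c_0$-direct sum $\mathcal{C}_0(V_I)\simeq\bigoplus_k\mathcal{C}_0(V_k)$. Because $\K_1$ commutes with countable $c_0$-direct sums (continuity of $K$-theory), it remains to evaluate $\K_1(\mathcal{C}_0(V_k))$ arc by arc. An arc strictly contained in $\,]0,1[\,$ is homeomorphic to $\R$, so $\K_1(\mathcal{C}_0(V_k))\simeq\K_1(\mathcal{C}_0(\R))\simeq\K_0(\mathbb{C})=\Z$ by the suspension isomorphism; an arc meeting an endpoint of $[0,1]$ is homeomorphic to $(0,1]$, so $\mathcal{C}_0(V_k)$ is a cone, hence contractible with $\K_1=0$. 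As $m_{f_I}$ is by definition the number of arcs strictly contained in $\,]0,1[\,$, summing gives $\K_1(I)\simeq\bigoplus_{m_{f_I}}\Z$.

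The one point requiring care — which I flag as the main obstacle — is the bookkeeping at the endpoints. I would verify that at most one component meets $0$ and at most one meets $1$ (each being the unique component containing that point), so that the counting formula $m_{f}=n_f-(1_{V_f}(0)+1_{V_f}(1))$ genuinely records the interior arcs. The degenerate case $V_I=[0,1]$, where a single arc meets both endpoints and $\mathcal{C}([0,1])$ is contractible, must be treated directly: here the operational definition gives $m_{f_I}=0$ and $\K_1(I)=0$, consistently. The remaining verifications are routine.
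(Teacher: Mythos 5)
Your proof is correct and takes essentially the same approach as the paper: the first isomorphism is read off from $\Cu(\mathcal{C}([0,1]))\simeq\Lsc([0,1],\overline{\N})$ by matching ideals with supports, and the $\K_1$ computation proceeds arc by arc ($\Z$ for arcs strictly inside $]0,1[$, $0$ for arcs meeting an endpoint) and is assembled by continuity of $\K$-theory — the paper packages this assembly as an inductive limit over partial unions of the arcs with inclusions inducing $\id\oplus 0$, which is the same mechanism as your $c_0$-direct-sum argument. Your flag about the degenerate case $V_I=[0,1]$ is legitimate — the formula $m_f=n_f-(1_{V_f}(0)+1_{V_f}(1))$ gives $-1$ there while the intended count of interior arcs is $0$ — but that is a defect of the paper's notation rather than of your argument.
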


\begin{proof}
We know that $\Cu(I)=I_{f_I}\simeq \Lsc(V_I,\overline{\N})$ and we obtain that $\Cu_f(I)\simeq \Lsc(V_I,\overline{\N}_*)$.  Then, we observe that open arcs of $[0,1]$ are of the following the form:
\vspace{0cm}\[
\,]a,b[\hspace{0,5cm} [0,1] \hspace{0,5cm} ]a,1] \hspace{0,5cm} [0,a[\hspace{0,5cm}  \emptyset
\]
and the $\K_1$ groups of continuous maps over these open arcs are, respectively:
\vspace{0cm}\[
\hspace{0,55cm}\Z \hspace{0,85cm} \{0\} \hspace{0,85cm} \{0\} \hspace{0,8cm}\{0\} \hspace{0,7cm} \{0\}
\] 
Furthermore, for any two disjoint open arcs $V,W$ of the open interval $]0,1[$, the canonical inclusion $i:I_{1_V}\subseteq I_{1_V+1_W}$ induces an injection $K_1(i):\Z\overset{id\oplus 0}{\lhook\joinrel\longrightarrow} \Z\oplus\Z$. Now, let us decompose $V_I= \bigsqcup\limits_{k=1}^{n_{f_I}}V_k $ as in \autoref{prg:disjointunioninterval}. Equivalently, we have that $f_I=\sum\limits_{k=1}^{n_{f_I}} 1_{V_k}$. Using all the above, we compute that $\K_1(I)\simeq \lim\limits_{n\geq 1}(\K_1(I_{(\sum\limits_{k=1}^{n} 1_{V_k})}),\K_1(i_n))\simeq \underset{m_{f_I}}{\oplus}\Z$, where $i_n:I_{(\sum\limits_{k=1}^{n} 1_{V_k})}\subseteq I_{(\sum\limits_{k=1}^{n+1} 1_{V_k})}$ is the canonical inclusion.
\end{proof}

\begin{thm}
Let $W_0:=[0,1[$ and $W_1:=]0,1]$. Then:

(i)\vspace{-0,7cm}\[
	\hspace{2cm}\begin{array}{ll}
		\Cu_1(\mathcal{C}([0,1]))\simeq \bigsqcup\limits_{V\in \mathcal{O}([0,1]))} \Lsc(V,\overline{\N}_*)\times (\underset{m_{1_V}}{\oplus}\Z)\\
		\hspace{2,08cm}\simeq \Cu_1(\mathcal{C}(]0,1[))\sqcup(\bigsqcup\limits_{i=0,1}\Lsc(W_i,\overline{\N}_*)\times \{0\}) \sqcup \Lsc([0,1],\overline{\N}_*)\times\{0\}.
	\end{array}
\]

\vspace{-0,2cm}(ii) $\Cu_1(\mathcal{C}([0,1]))_c\simeq(\{n 1_{[0,1]}\}_{n\in \N})\times\{0\}\simeq \N$.
\end{thm}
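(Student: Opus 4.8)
The plan is to read off part (i) from the alternative description of the invariant together with the ideal-by-ideal computation already at hand, and then to reorganise the resulting disjoint union according to how the open sets meet the endpoints; part (ii) will then drop out of the characterisation of compact elements. First I would record that $\mathcal{C}([0,1])$ is a separable commutative $\CatCa$-algebra of stable rank one, so that $\Lat_f(\mathcal{C}([0,1]))=\Lat(\mathcal{C}([0,1]))$ coincides with the set of $\sigma$-unital ideals and \autoref{prop:newpicture} applies. Combining the resulting $\Cu^\sim$-isomorphism with the bijection $V\longmapsto I_{1_V}$ between $\mathcal{O}([0,1])$ and $\Lat(\mathcal{C}([0,1]))$ (see \autoref{prg:lsc}) and with the ideal-wise identifications of \autoref{lma:computeinterval}, I would obtain the first displayed isomorphism
\[
\Cu_1(\mathcal{C}([0,1]))\simeq\bigsqcup_{V\in\mathcal{O}([0,1])}\Lsc(V,\overline{\N}_*)\times\Big(\bigoplus_{m_{1_V}}\Z\Big)
\]
with no extra work, since each factor $\Cu_f(I)\times\K_1(I)$ has already been computed.

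For the second isomorphism I would partition $\mathcal{O}([0,1])$ according to the trace of $V$ on the two endpoints $\{0,1\}$, using the decomposition $V=\bigsqcup_k V_k$ into arcs from \autoref{prg:disjointunioninterval}. The key point, supplied by \autoref{lma:computeinterval}, is that only the arcs strictly contained in $]0,1[$ contribute a copy of $\Z$ to $\K_1(I)$, whereas every arc abutting an endpoint contributes nothing; this is exactly what the counter $m_{1_V}$ records. Hence the open sets $V\subseteq\,]0,1[$ (meeting neither endpoint) reassemble — again via \autoref{prop:newpicture}, now applied to the separable stable rank one algebra $\mathcal{C}(]0,1[)$ — into the summand $\Cu_1(\mathcal{C}(]0,1[))$, while the arcs touching a single endpoint, modelled on $W_0=[0,1[$ and $W_1=\,]0,1]$, and the fully connected set $V=[0,1]$ yield the remaining summands, all carrying trivial $\K_1$. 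I expect the bookkeeping of this regrouping, together with the degenerate value of the formula $m_{1_V}=n_{1_V}-(1_V(0)+1_V(1))$ in the single case $V=[0,1]$ (where the unique arc touches both endpoints and so must be handled separately, giving the term $\Lsc([0,1],\overline{\N}_*)\times\{0\}$), to be the main obstacle; the rest is a routine matching of index sets.

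For part (ii) I would invoke \autoref{cor:compactcu1}: an element $[(a,u)]$ is compact in $\Cu_1(\mathcal{C}([0,1]))$ if and only if $[a]$ is compact in $\Cu(\mathcal{C}([0,1]))\simeq\Lsc([0,1],\overline{\N})$. It then remains to check that the compact elements of $\Lsc([0,1],\overline{\N})$ are precisely the constant functions $n1_{[0,1]}$ with $n\in\N$. A compact $f$ is necessarily finite-valued and bounded, since otherwise it is the supremum of the non-stabilising increasing sequence $\min(f,n)$; writing the finite-valued lsc function as $f=\sum_{k\geq 1}1_{\{f\geq k\}}$, the relation $f\ll f$ forces each open level set $\{f\geq k\}$ to be clopen, and connectedness of $[0,1]$ leaves only $\emptyset$ and $[0,1]$, whence $f=n1_{[0,1]}$.

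Finally, for such a representative $a$ with $n\geq 1$ one has $I_a=\mathcal{C}([0,1])$, so $\K_1(I_a)=\K_1(\mathcal{C}([0,1]))=0$ and the unitary component is forced to be trivial; adjoining the neutral element this yields
\[
\Cu_1(\mathcal{C}([0,1]))_c\simeq\{n1_{[0,1]}\}_{n\in\N}\times\{0\}\simeq\N,
\]
as claimed.
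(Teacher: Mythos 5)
Your overall route is the paper's: for (i) the paper's proof is literally ``combine \autoref{prop:newpicture} with \autoref{lma:computeinterval} and \autoref{prg:lsc}'', and for (ii) it invokes \autoref{cor:compactcu1} together with the fact that the compact elements of $\Lsc([0,1],\overline{\N})$ are the finite constant functions. Your derivation of the first displayed isomorphism and your proof of (ii) (boundedness via $\min(f,n)$, clopen level sets, connectedness of $[0,1]$, and $\K_1(\mathcal{C}([0,1]))=0$) are correct and supply details the paper omits.

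The gap is in the second displayed isomorphism. You propose to partition $\mathcal{O}([0,1])$ according to which endpoints $V$ meets and claim that the open sets meeting an endpoint contribute exactly the three summands $\Lsc(W_0,\overline{\N}_*)\times\{0\}$, $\Lsc(W_1,\overline{\N}_*)\times\{0\}$ and $\Lsc([0,1],\overline{\N}_*)\times\{0\}$, ``all carrying trivial $\K_1$''. This is where the argument fails: an open set meeting an endpoint need not be one of $W_0$, $W_1$, $[0,1]$. For instance $V=[0,\tfrac12[$ contributes the component $\Lsc([0,\tfrac12[,\overline{\N}_*)\times\{0\}$, which is none of the three listed terms, and $V=[0,\tfrac12[\;\cup\;]\tfrac23,\tfrac34[$ has $m_{1_V}=1$ and contributes $\Lsc(V,\overline{\N}_*)\times\Z$, a component with nontrivial $\K_1$-part that appears nowhere on the right-hand side. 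You are conflating the arcs of the decomposition $V=\bigsqcup_k V_k$ (where indeed only the arcs abutting an endpoint are modelled on $W_0$, $W_1$ and carry no $\K_1$) with the open sets $V$ that index the components of the disjoint union. Consequently the ``routine matching of index sets'' you defer to does not exist. To be fair, the paper's own proof of (i) only establishes the first isomorphism and is silent on the second, which as a component-by-component regrouping appears not to hold as stated; but that means this part cannot be closed along the lines you sketch.
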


\begin{proof}
(i) Combine \autoref{prop:newpicture} with \autoref{lma:computeinterval} and \autoref{prg:lsc}. 

(ii) From \autoref{cor:compactcu1}, we know that $(x,k)\in\Cu_1(\mathcal{C}([0,1]))$ is a compact element if and only if $x$ is compact in $\Lsc([0,1],\overline{\N})$, if and only if $x$ is constant on $[0,1]$ and $x\ll \infty$.
\end{proof}

\textbf{The $\mathcal{C}(\T)$ case.}

\begin{lma}
\label{lma:computecircle}
Let $I\in\Lat(\mathcal{C}(\T))$ and let $f_I:=1_{V_I}$ be the indicator map on the unique open set $V_I$ of $\T$ corresponding to $I$. We have \vspace{-0,3cm}\[
\left\{\begin{array}{ll}
\Cu_f(I)\simeq \Lsc(V_I,\overline{\N}_*).\\
 \K_1(I)\simeq\underset{n_{f_I}}{\oplus}\Z.
\end{array}\right.
\]
\end{lma}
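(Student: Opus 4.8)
The plan is to mirror the proof of \autoref{lma:computeinterval}; the Cuntz part is verbatim identical and only the $\K_1$ computation requires a (minor) modification. For the statement $\Cu_f(I)\simeq\Lsc(V_I,\overline{\N}_*)$, I would invoke \autoref{prg:lsc} (i.e. \cite[Theorem 3.4]{AntoinePereraSantiago11}) to identify $\Cu(I)=I_{f_I}\simeq\Lsc(V_I,\overline{\N})$: this applies because $\mathbb{C}$ has trivial $\K_1$ in all its ideals and $V_I$ is a second countable, locally compact Hausdorff space of covering dimension at most one, being an open subset of $\T$. The full elements of $\Cu(I)$ are exactly the functions whose support is all of $V_I$, i.e. the nowhere-vanishing ones, which gives $\Cu_f(I)\simeq\Lsc(V_I,\overline{\N}_*)$.

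For the $\K_1$ part, I would decompose $V_I=\bigsqcup_{k=1}^{n_{f_I}}V_k$ into open arcs as in \autoref{prg:disjointunioninterval}, so that $f_I=\sum_{k}1_{V_k}$. The crucial observation is the list of open arcs of $\T$ together with their associated $\K_1$ groups: a proper open arc $]a,b[\subsetneq\T$ is homeomorphic to $\R$, whence $\K_1(\mathcal{C}_0(]a,b[))\simeq\K_1(\mathcal{C}_0(\R))\simeq\Z$; the whole circle $\T$ gives $\K_1(\mathcal{C}(\T))\simeq\Z$ (generated by the canonical unitary $z\mapsto z$); and the empty arc gives $\{0\}$. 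This is exactly where the circle departs from the interval: whereas on $[0,1]$ the half-open arcs $[0,a[$ and $]a,1]$, as well as the whole space, contributed trivially, on $\T$ \emph{every} nonempty arc contributes a copy of $\Z$. Note that the two nontrivial cases never mix: if $V_I=\T$ there is a single arc, and otherwise all arcs are proper.

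As in the interval case, for two disjoint open arcs $V,W$ the canonical inclusion $i:I_{1_V}\hooklongrightarrow I_{1_V+1_W}$ realizes $\mathcal{C}_0(V)$ as the functions vanishing on $W$, so $\K_1(i):\Z\overset{id\oplus 0}{\hooklongrightarrow}\Z\oplus\Z$. I would then assemble the pieces through the inductive limit
\[
\K_1(I)\simeq\lim_{n\geq 1}\bigl(\K_1\bigl(I_{\sum_{k=1}^{n}1_{V_k}}\bigr),\K_1(i_n)\bigr)\simeq\underset{n_{f_I}}{\oplus}\Z,
\]
where $i_n:I_{\sum_{k=1}^{n}1_{V_k}}\hooklongrightarrow I_{\sum_{k=1}^{n+1}1_{V_k}}$ is the canonical inclusion, using the continuity of $\K_1$ to handle the case $n_{f_I}=\infty$. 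The only point needing care---and the one genuine obstacle---is the whole-circle arc: one must verify $\K_1(\mathcal{C}(\T))\simeq\Z$ and, more subtly, that this single arc is counted once in $n_{f_I}$, which is precisely what distinguishes the resulting $\underset{n_{f_I}}{\oplus}\Z$ from the interval's $\underset{m_{f_I}}{\oplus}\Z$.
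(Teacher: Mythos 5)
Your argument is correct and follows essentially the same route as the paper's own proof: identify $\Cu(I)\simeq\Lsc(V_I,\overline{\N})$ and pass to full (nowhere-vanishing) functions, then decompose $V_I$ into open arcs, record the $\K_1$-groups of the arcs of $\T$ (with every nonempty arc, including $\T$ itself, contributing a $\Z$), and assemble via the inductive limit of the canonical inclusions $\K_1(i_n)=\mathrm{id}\oplus 0$. The additional justifications you supply (e.g.\ $]a,b[\,\cong\R$ and the generator of $\K_1(\mathcal{C}(\T))$) are consistent with, and slightly more detailed than, what the paper records.
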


\begin{proof}
We know that $\Cu(I)=I_{f_I}\simeq \Lsc(V_I,\overline{\N})$ and we obtain that $\Cu_f(I)\simeq \Lsc(V_I,\overline{\N}_*)$.  Then, we observe that open arcs of $\T$ are of the following the form:
\vspace{-0cm}\[
\,]a,b[\hspace{0,5cm} \T \hspace{0,5cm}  \emptyset
\]
and the $\K_1$ groups of continuous maps over these open arcs are, respectively:
\vspace{-0cm}\[
\hspace{0,5cm}\Z \hspace{0,8cm} \Z \hspace{0,5cm} \{0\}
\] 
Furthermore, for any two disjoint open arcs $V,W$ of $\T$, the canonical inclusion $i:I_{1_V}\subseteq I_{1_V+1_W}$ induces a injection $K_1(i):\Z\overset{id\oplus 0}{\lhook\joinrel\longrightarrow} \Z\oplus\Z$. Now, let us decompose $V_I= \bigsqcup\limits_{k=1}^{n_{f_I}}V_k $ as in \autoref{prg:disjointunioninterval}. Equivalently, we have that $f_I=\sum\limits_{k=1}^{n_{f_I}} 1_{V_k}$. Using all the above, we get $\K_1(I)\simeq \lim\limits_{n\geq 1}(\K_1(I_{(\sum\limits_{k=1}^{n} 1_{V_k})}),\K_1(i_n))\simeq \underset{n_{f_I}}{\oplus}\Z$, where $i_n:I_{(\sum\limits_{k=1}^{n} 1_{V_k})}\subseteq I_{(\sum\limits_{k=1}^{n+1} 1_{V_k})}$ is the canonical inclusion.
\end{proof}

\begin{thm}
\label{thm:computecircle}
We have the following:

(i)\vspace{-0,45cm}\[
	\hspace{0cm}\begin{array}{ll}
		\Cu_1(\mathcal{C}(\T))\simeq \bigsqcup\limits_{V\in \mathcal{O}(\T)} \Lsc(V,\overline{\N}_*)\times (\underset{n_{1_V}}{\oplus}\Z)\\
		\hspace{1,56cm}\simeq \Cu_1(\mathcal{C}(]0,1[)) \sqcup \Lsc(\T,\overline{\N}_*)\times\Z.
	\end{array}
\]

(ii) $\Cu_1(\mathcal{C}(\T))_c\simeq(\{n 1_{ \T}\}_{n\in\N})\times\Z\simeq \N\times\Z$. 
\end{thm}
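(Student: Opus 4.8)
The plan is to assemble the alternative picture of the invariant with the ideal-by-ideal computation already carried out in \autoref{lma:computecircle}. Since $\mathcal{C}(\T)$ is separable and of stable rank one, the hypothesis $\Lat_f(\mathcal{C}(\T))=\{\sigma\text{-unital ideals}\}$ holds automatically, so \autoref{prop:newpicture} applies and provides a $\Cu^\sim$-isomorphism $\Cu_1(\mathcal{C}(\T))\simeq\bigsqcup_{I\in\Lat(\mathcal{C}(\T))}\Cu_f(I)\times\K_1(I)$. I would then use the standard identification $\Lat(\mathcal{C}(\T))\simeq\mathcal{O}(\T)$ (a closed ideal is $\mathcal{C}_0(V)$ for a unique open $V$) to reindex the disjoint union over $V\in\mathcal{O}(\T)$, and substitute the two computations of \autoref{lma:computecircle}, namely $\Cu_f(I)\simeq\Lsc(V,\overline{\N}_*)$ and $\K_1(I)\simeq\oplus_{n_{1_V}}\Z$. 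This yields the first isomorphism of (i) directly, the identification $\Cu(I)\simeq\Lsc(V,\overline{\N})$ (hence $\Cu_f(I)\simeq\Lsc(V,\overline{\N}_*)$) being exactly the input recalled in \autoref{prg:lsc}.

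For the second isomorphism of (i) I would regroup the index set $\mathcal{O}(\T)$ according to whether $V=\T$ or $V$ is a proper open set. The key feature singled out by \autoref{lma:computecircle} is that on the circle every open arc --- whether a proper arc $]a,b[$ or the whole circle $\T$ --- contributes exactly one copy of $\Z$ to $\K_1$; this is what distinguishes the circle from the interval, where the arcs meeting an endpoint contribute nothing. The term $V=\T$ is a single arc with $\K_1(\mathcal{C}(\T))\simeq\Z$ (detected by the winding number) and hence produces the summand $\Lsc(\T,\overline{\N}_*)\times\Z$. Every proper open $V\subsetneq\T$ avoids a point, so it sits inside $\T\setminus\{\text{pt}\}\cong\,]0,1[$ and decomposes into line-type arcs each contributing one $\Z$, matching the first-isomorphism description of $\Cu_1(\mathcal{C}(]0,1[))$ obtained by applying the same formula to $\mathcal{C}_0(]0,1[)$. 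Collecting the proper-$V$ terms therefore reproduces $\Cu_1(\mathcal{C}(]0,1[))$, and adding back the full-circle term gives the stated decomposition.

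For part (ii) I would invoke \autoref{cor:compactcu1}: a pair $(x,k)$ is compact in $\Cu_1(\mathcal{C}(\T))$ if and only if $x$ is compact in $\Cu(\mathcal{C}(\T))\simeq\Lsc(\T,\overline{\N})$, if and only if $x$ is constant on $\T$ and $x\ll\infty$, i.e. $x=n1_\T$ with $n\in\N$ (connectedness and lower semicontinuity of $\T$ rule out non-constant functions, and $x\ll\infty$ rules out the value $\infty$). For $n\geq 1$ the ideal $I_{n1_\T}$ is all of $\mathcal{C}(\T)$, so the $\K_1$-label ranges over $\K_1(\mathcal{C}(\T))\simeq\Z$; this gives $\Cu_1(\mathcal{C}(\T))_c\simeq\{n1_\T\}_{n\in\N}\times\Z\simeq\N\times\Z$.

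The main obstacle is the second isomorphism of (i): the first isomorphism is a mechanical substitution, but the regrouping requires care in matching the $\K_1$-decorations and the order under the connecting maps $\delta_{I_xI_y}$ when one passes from proper open sets of $\T$ to open sets of the interval (the ideal lattices themselves are not literally isomorphic, since $\mathcal{O}(]0,1[)$ has a top element while the proper opens of $\T$ do not). The essential input is precisely the $\K_1$ table of \autoref{lma:computecircle}, which guarantees that each line-type arc of a proper open set contributes a single $\Z$ exactly as in the open interval, so that the two disjoint-union descriptions are carried onto one another; the genuinely new phenomenon compared with the interval is the extra factor $\Z$ attached to the full-circle term, coming from $\K_1(\mathcal{C}(\T))\neq 0$.
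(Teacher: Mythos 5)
Your proposal follows essentially the same route as the paper: part (i) is obtained by combining \autoref{prop:newpicture} with \autoref{lma:computecircle} and the facts recalled in \autoref{prg:lsc}, and part (ii) is exactly the paper's application of \autoref{cor:compactcu1} characterizing compact elements via constancy on the connected space $\T$. If anything, your discussion of the second isomorphism in (i) (the regrouping of proper open sets versus the full circle, and the caveat about the index posets not being literally isomorphic) is more detailed than the paper's one-line proof, which simply cites the combination of those results.
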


\begin{proof}
(i) Combine \autoref{prop:newpicture} with \autoref{lma:computeinterval} and \autoref{prg:lsc}. 

(ii) From \autoref{cor:compactcu1}, we know that $(x,k)\in\Cu_1(\mathcal{C}(\T))$ is a compact element if and only if $x$ is compact in $\Lsc(\T,\overline{\N})$, if and only if $x$ is constant on $\T$ and $x\ll \infty$.
\end{proof}

Now that we have computed the $\Cu_1$-semigroup of the interval algebra and the circle algebra, we are able to obtain the $\Cu_1$-semigroup of any $\AI$ and $\A\!\T$ algebra, using \autoref{cor:cu1ctinuous}. Actually, we will next compute a concrete example of an $\A\!\T$ algebra that is constructed as $\mathcal{C}(\T)\otimes$UHF. 

Let $q$ be a supernatural number and consider $M_q$ the UHF algebra associated to $q$. Consider any sequence of prime numbers $(q_n)_n$ such that $q= \prod\limits_{n\in \N}q_n $. Write $(A_n,\phi_{nm})_n$ the inductive system associated to $(q_n)_n$. Now consider the following $\A\!\T$ algebra: $A:= \lim\limits_{\longrightarrow_n}(\mathcal{C}(\T)\otimes A_n,id\otimes\phi_{nm})$. In fact, $A\simeq \mathcal{C}(\T)\otimes M_q$. (Similar construction and computations can be done for the interval).

\begin{thm}
Let $M_q$ be a UHF algebra. Then:
\vspace{-0,1cm}\[\Cu_1(\mathcal{C}(\T)\otimes M_q)\simeq \bigsqcup\limits_{V\in \mathcal{O}(\T)} \Lsc(V,(\Cu(M_q)\setminus\{0\}))\times (\underset{n_{1_V}}{\oplus}\K_0(M_q)).\]
In particular, for any UHF algebra of infinite type $M_{p^\infty}$, we get:
\vspace{-0,1cm}\[\Cu_1(\mathcal{C}(\T)\otimes M_q)\simeq \bigsqcup\limits_{V\in\mathcal{O}(\T)}\Lsc(V,(\N[\frac{1}{p}]\sqcup  ]0,\infty]\setminus\{0\}))\times (\underset{n_{1_V}}{\oplus} \Z[\frac{1}{p}]).\]
\end{thm}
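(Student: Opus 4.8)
The plan is to exploit the continuity of $\Cu_1$ established in \autoref{cor:cu1ctinuous}, reducing the computation to the circle algebra already treated in \autoref{thm:computecircle}. Write $A:=\mathcal{C}(\T)\otimes M_q\simeq \lim\limits_{\longrightarrow_n}(\mathcal{C}(\T)\otimes A_n,\id\otimes\phi_{nm})$ with $A_n\simeq M_{k_n}$. Each stage $\mathcal{C}(\T)\otimes A_n\simeq M_{k_n}(\mathcal{C}(\T))$, and since $\Cu_1$ is built from the stabilization $(-)\otimes\mathcal{K}$ we have $\Cu_1(\mathcal{C}(\T)\otimes A_n)\simeq\Cu_1(\mathcal{C}(\T))$, which \autoref{thm:computecircle} identifies (through the picture of \autoref{prop:newpicture}) with $\bigsqcup_{V\in\mathcal{O}(\T)}\Lsc(V,\overline{\N}_*)\times(\underset{n_{1_V}}{\oplus}\Z)$. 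By continuity, $\Cu_1(A)\simeq\Cu^\sim-\lim\limits_{\longrightarrow_n}\Cu_1(\mathcal{C}(\T)\otimes A_n)$, so it remains to identify the connecting maps and pass to the limit.

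First I would fix the lattice of ideals. As $M_q$ is simple, the ideals of $\mathcal{C}(\T)\otimes M_q$ and of each stage are exactly $\mathcal{C}_0(V)\otimes M_q$ for $V\in\mathcal{O}(\T)$; hence the index set $\mathcal{O}(\T)$ of the disjoint union is independent of $n$ and is preserved by the connecting maps. Using \autoref{lma:newpicturemorph}, the map $\Cu_1(\id\otimes\phi_{nm})$ decomposes as a pair $(\alpha_0,\{\alpha_{I_V}\})$: on the Cuntz coordinate $\alpha_0=\Cu(\id\otimes\phi_{nm})$ acts on $\Lsc(V,\overline{\N})$ as pointwise multiplication by the multiplicity of $\phi_{nm}$, while on the $\K_1$-coordinate $\alpha_{I_V}=\K_1((\id\otimes\phi_{nm})_{|I_V})$ acts on each of the $n_{1_V}$ summands $\Z$ by the same multiplicity.

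Next I would compute the limit coordinatewise. For the Cuntz coordinate I would not compute the $\Cu$-completion by hand but instead invoke the semicontinuity theorem recalled in \autoref{prg:lsc}, applicable since $M_q$ is separable and simple with $\K_1(M_q)=0$ and $V$ is second countable of covering dimension at most one: this gives $\Cu(\mathcal{C}_0(V)\otimes M_q)\simeq\Lsc(V,\Cu(M_q))$, and restricting to elements full in the ideal (those nowhere zero on $V$), exactly as in \autoref{lma:computecircle}, yields $\Cu_f(I_V)\simeq\Lsc(V,\Cu(M_q)\setminus\{0\})$. For the $\K_1$-coordinate the limit is purely algebraic in $\AbGp$ and commutes with the (possibly countable) direct sum, so $\underset{n_{1_V}}{\oplus}\lim\limits_{\longrightarrow_n}(\Z,\times\mathrm{mult})\simeq\underset{n_{1_V}}{\oplus}\K_0(M_q)$, using $\K_0(M_q)=\lim\limits_{\longrightarrow_n}(\Z,\times\mathrm{mult})$. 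Assembling these two coordinates over all $V\in\mathcal{O}(\T)$ produces the first displayed isomorphism; the special case follows by substituting $\Cu(M_{p^\infty})=\N[\tfrac{1}{p}]\sqcup\,]0,\infty]$ and $\K_0(M_{p^\infty})=\Z[\tfrac{1}{p}]$, so that $\Cu(M_{p^\infty})\setminus\{0\}=\N[\tfrac{1}{p}]\sqcup\,]0,\infty]\setminus\{0\}$ and $\underset{n_{1_V}}{\oplus}\K_0(M_{p^\infty})=\underset{n_{1_V}}{\oplus}\Z[\tfrac{1}{p}]$.

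I expect the main obstacle to be the interchange of the $\Cu^\sim$-inductive limit with the decomposition $\bigsqcup_{V}\Lsc(V,-)\times\underset{n_{1_V}}{\oplus}(-)$: one must verify that the limit is genuinely computed coordinatewise and is compatible with the inclusion-induced maps $\delta_{I_VI_{V'}}$ across ideals, so that the resulting coordinatewise bijection is an honest $\Cu^\sim$-isomorphism and not merely a set-theoretic identification. In particular the appearance of the soft part $]0,\infty]$ in the Cuntz coordinate comes from the completion step in the $\Cu$-limit, which is precisely what the semicontinuity theorem encodes and why I would route that coordinate through \autoref{prg:lsc} rather than through a direct limit computation.
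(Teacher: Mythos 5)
Your argument is essentially correct, but it takes a more roundabout route than the paper, and the one step you yourself flag as the main obstacle is precisely the step the paper's proof avoids. The paper does \emph{not} pass to a $\Cu^\sim$-inductive limit of the stages $\Cu_1(\mathcal{C}(\T)\otimes A_n)$: it applies the alternative-picture theorem (\autoref{prop:newpicture}) directly to the limit algebra $A=\mathcal{C}(\T)\otimes M_q$, which reduces the whole computation to three pieces of data about $A$ itself — the ideal lattice ($\Lat(A)=\{\mathcal{C}_0(V)\otimes M_q\}_{V\in\mathcal{O}(\T)}$ by simplicity of $M_q$), the Cuntz semigroup of each ideal ($\Cu(\mathcal{C}_0(V)\otimes M_q)\simeq\Lsc(V,\Cu(M_q))$ via \cite[Theorem 3.4]{AntoinePereraSantiago11}), and the $\K_1$-group of each ideal, which the paper gets from the K\"unneth formula as $(\oplus^{n_{1_V}}\Z)\otimes\K_0(M_q)$ rather than from your direct limit $\lim(\Z,\times\mathrm{mult})$. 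Notice that the ingredients you assemble are exactly these three, since you route the Cuntz coordinate through \autoref{prg:lsc} anyway; so your detour through \autoref{cor:cu1ctinuous} buys nothing and instead creates the genuine burden of justifying that the $\Cu^\sim$-limit is computed coordinatewise across the disjoint union and is compatible with the maps $\delta_{I_VI_{V'}}$ — a nontrivial verification you leave open. The fix is simply to drop the continuity framework and feed your computations of $\Cu_f(\mathcal{C}_0(V)\otimes M_q)$ and $\K_1(\mathcal{C}_0(V)\otimes M_q)$ straight into \autoref{prop:newpicture}, at which point your proof coincides with the paper's (modulo the K\"unneth-versus-direct-limit choice for $\K_1$, both of which are fine).
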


\begin{proof}
Since UHF algebras are simple, we know that all ideals of $\mathcal{C}(\T)\otimes M_q$ are of the form $\mathcal{C}_0(U)\otimes M_q$ for some $U\in\mathcal{O}(\T)$. Hence, using the K\"unneth formula (see \cite[Theorem 23.1.3]{Bl86}), we obtain that $\K_1(\mathcal{C}_0(U)\otimes M_q)\simeq (\underset{1}{\overset{n_U}{\oplus}}\Z)\otimes\K_0(M_q)\simeq \underset{1}{\overset{n_U}{\oplus}}\K_0(M_q)$. On the other hand, by \cite[Theorem 3.4]{AntoinePereraSantiago11}, we compute that $\Cu(\mathcal{C}_0(U)\otimes M_q)\simeq \Lsc(U,\Cu(M_q))$. The result follows from \autoref{prop:newpicture}.
\end{proof}

\section{Relation of \texorpdfstring{$\Cu_1$}{Cu1} with existing K-Theoretical invariants}
\label{section:recoverfunctors}
The aim of this section is to recover existing invariants functorially. We have already seen that the positive cone of $\Cu_1(A)$ is isomorphic to $\Cu(A)$. Our first step is to capture the $\K_1$ group information. To that end, we define a well-behaved set of maximal elements of a $\Cu^\sim$-semigroup $S$, written $S_{max}$, and we prove that $\Cu_1(A)_{max}$ is isomorphic to $\K_1(A)$. Subsequently, we recover functorially $\Cu$, $\K_1$ and finally the $\K_*$ group. 
As before, we shall assume that $A$ is a $\CatCa$-algebra that has stable rank one and denote the category of such $\CatCa$-algebras by $\CatCasr$.

\subsection{An abelian group of maximal elements: \texorpdfstring{$\nu_{max}$}{nu max}}

\begin{dfn}
Let $S$ be a $\Cu^\sim$-semigroup. We say that $S$ is \emph{positively directed} if, for any $x\in S$, there exists $p_x\in S$ such that $x+p_x\geq 0$.
\end{dfn}

\begin{lma}
Let $A$ be a $\CatCa$-algebra of stable rank one. Then $\Cu_1(A)$ is positively directed.
\end{lma}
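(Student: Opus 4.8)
The plan is to exhibit, for each $x=[(a,u)]\in\Cu_1(A)$, an explicit complement obtained simply by taking the adjoint unitary. Concretely, I would set $p_x:=[(a,u^*)]$, which is a legitimate element of $\Cu_1(A)$ since $u^*\in\mathcal{U}(\her(a)^\sim)$ whenever $u$ is. Then, by the definition of addition in $\Cu_1(A)$, one has $x+p_x=[(a\oplus a,\,u\oplus u^*)]$, and the whole task reduces to showing that this sum is a positive element, i.e. (recalling that positive elements are exactly those of the form $[(c,1)]$) that its $\K_1$-component vanishes.

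First I would identify the hereditary subalgebra $\her(a\oplus a)$. Writing $\psi:M_2(A\otimes\mathcal{K})\longrightarrow A\otimes\mathcal{K}$ for the $^*$-isomorphism used to define the addition (see \autoref{dfn:Cusg}), a direct computation gives $\overline{\diag(a,a)\,M_2(A\otimes\mathcal{K})\,\diag(a,a)}=M_2(\her(a))$, whence $\psi$ restricts to a $^*$-isomorphism $M_2(\her(a))\simeq\her(a\oplus a)$. Consequently $\psi^\sim$ induces an isomorphism $\K_1(\her(a))\simeq\K_1(M_2(\her(a)))\simeq\K_1(\her(a\oplus a))$, and under this identification $u\oplus u^*=\psi^\sim(\diag(u,u^*))$.

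The key step is then the standard Whitehead rotation argument: in $\mathcal{U}(M_2(\her(a)^\sim))$ the unitary $\diag(u,u^*)$ is homotopic to $\diag(1,1)=1$ through the path $t\longmapsto R_t\,\diag(u,1)\,R_t^{*}\,\diag(1,u^*)$, where $R_t$ are the scalar rotation matrices in $M_2(\mathbb{C})\subseteq M_2(\her(a)^\sim)$. Crucially, the whole path stays inside $M_2(\her(a)^\sim)$, so $[\diag(u,u^*)]=[u]+[u^*]=0$ in $\K_1(\her(a))$. Applying $\K_1(\psi^\sim)$ then yields $[u\oplus u^*]=0$ in $\K_1(\her(a\oplus a))$.

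Finally I would verify positivity directly against the order: $[(a\oplus a,u\oplus u^*)]\geq[(0_A,1_\mathbb{C})]$ amounts to $0\lesssim_{\Cu}a\oplus a$, which is automatic, together with $[\theta_{0,a\oplus a}(1)]=[u\oplus u^*]$ in $\K_1(\her(a\oplus a))$; since the left-hand side is the class of a scalar unitary and hence $0$, the vanishing established above closes the argument. I do not expect a genuine obstacle here; the only point requiring care is the bookkeeping of scalar parts when forming $u\oplus u^*$, but this is harmless, since $\sim_1$ identifies unitaries up to homotopy, so one may first replace $u$ by a homotopic unitary with scalar part $1$ (then $u^*$ also has scalar part $1$).
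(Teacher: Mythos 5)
Your proof is correct, and the underlying idea is the same as the paper's: the positive complement of a class is obtained by inverting its $\K_1$-datum, which at the level of representatives means passing to the adjoint unitary. The difference is in execution. The paper's proof is a one-liner in the alternative picture of \autoref{prop:newpicture}: writing an element as $(x,k)$ with $x\in\Cu(A)$ and $k\in\K_1(I_x)$, it simply observes $(x,k)+(x,-k)=(2x,0)\geq 0$. You instead work directly with representatives, identify $\her(a\oplus a)\simeq\psi(M_2(\her(a)))$, and kill $[u\oplus u^*]$ by the Whitehead rotation trick. What your route buys is independence from the machinery of the alternative picture: \autoref{prop:newpicture} is proved under the extra hypothesis $\Lat_f(A)=\{\sigma\text{-unital ideals of }A\}$ (automatic for separable $A$ but not assumed in the lemma), whereas your argument uses only the definitions of $\lesssim_1$ and of the addition, so it covers the stated generality without comment. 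Your handling of the two delicate points is also right: the scalar parts of $u$ and $u^*$ must be matched before forming $u\oplus u^*$ (replacing $u$ by $\bar\lambda u$ up to homotopy does this), and the Whitehead path $t\mapsto R_t\diag(u,1)R_t^*\diag(1,u^*)$ does stay inside $M_2(\her(a))^\sim$ with constant scalar part $1_2$, since each factor lies there, so the homotopy transports under $\psi^\sim$ to one inside $\her(a\oplus a)^\sim$.
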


\begin{proof}
Using the picture of \autoref{prop:newpicture} (see \autoref{prg:newpicture}), consider $(x,k)\in\Cu_1(A)$, where $x\in\Cu(A)$ and $k\in \K_1(I_x)$. Observing that $(x,k)+(x,-k)=(2x,0)\geq 0$, we deduce that $\Cu_1(A)$ is positively directed. 
\end{proof}

\begin{dfn}
Let $S$ be a $\Cu^\sim$-semigroup. We define
$S_{max}:=\{x\in S \mid \text{ if } y\geq x, \text{ then } y=x\}$. 
\end{dfn}

\begin{prop}
\label{prop:PCABGP}
Let $S$ be a positively directed $\Cu^\sim$-semigroup. Then $S_{max}$ is either empty or an absorbing abelian group in $S$ whose neutral element $e_{S_{\max}}$ is positive. 
\end{prop}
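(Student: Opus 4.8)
The plan is to argue entirely at the level of the ordered-monoid structure, using the positively directed hypothesis only to produce ``approximate inverses'' and using maximality to upgrade inequalities to equalities. Throughout I would use just two facts: that $S$ is an ordered monoid, so $a\le b$ implies $a+c\le b+c$, and that $0$ is its neutral element. By \emph{absorbing} I read the condition $S_{max}+S\subseteq S_{max}$, and I would prove this together with the group structure.

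The first step I would carry out is the elementary but pivotal observation that \textbf{maximal elements absorb positive elements}: if $x\in S_{max}$ and $t\ge 0$, then $x+t\ge x+0=x$ by compatibility of $+$ with $\le$, and maximality of $x$ forces $x+t=x$. This is the engine of everything that follows. Building on it, I would then show the \textbf{absorption property} $S_{max}+S\subseteq S_{max}$. Fix $x\in S_{max}$ and $s\in S$, and use positive directedness to pick $p$ with $s+p\ge 0$; then $(x+s)+p=x+(s+p)=x$ by the first step. To see $x+s$ is maximal, take $y\ge x+s$: adding $p$ gives $y+p\ge x$, hence $y+p=x$ by maximality, and adding $s$ together with $p+s\ge 0$ yields $x+s=y+p+s\ge y$, so antisymmetry gives $y=x+s$. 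I expect this to be \textbf{the main obstacle}, precisely because $\Cu^\sim$-semigroups carry no cancellation law; the resolution is the two-sided squeeze on $y$ made possible by the directing element $p$. A consequence is that $S_{max}$ is closed under addition, hence a commutative sub-semigroup.

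Next I would locate the \textbf{neutral element}. Choosing any $m_0\in S_{max}$ and $q$ with $m_0+q\ge 0$, I would set $e:=m_0+q$, which lies in $S_{max}$ by the absorption step and satisfies $e\ge 0$. For every $x\in S_{max}$ the first step gives $x+e=x$, so $e$ is a neutral element of $(S_{max},+)$ and is manifestly positive. I would also record that $e$ is the unique positive element of $S_{max}$: if $e'\ge 0$ lies in $S_{max}$, then $e=e+e'=e'$.

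Finally, for \textbf{inverses}, given $x\in S_{max}$ I would take $p$ with $x+p\ge 0$; then $x+p\in S_{max}$ by absorption and is positive, so uniqueness of the positive maximal element gives $x+p=e$. Since $p$ itself need not be maximal, I would set $y:=e+p$, which lies in $S_{max}$ by absorption, and compute $x+y=(x+e)+p=x+p=e$. This exhibits an inverse for each element, so $S_{max}$ is an abelian group, absorbing in $S$, with positive neutral element $e_{S_{max}}=e$, as claimed. The only genuinely delicate point is the absorption step; the remaining items are short formal verifications once that is in place.
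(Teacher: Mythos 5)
Your proof is correct, and every step checks out: the ``maximal elements absorb positives'' observation, the two-sided squeeze in the absorption step (add $p$ to get $y+p=x$ by maximality, then add $s$ back using $s+p\geq 0$ to get $x+s\geq y$), and the derivation of the neutral element and inverses from absorption are all valid. The paper uses exactly the same two mechanisms — positive directedness to produce an element $p_x$ with $x+p_x\geq 0$, and maximality to upgrade inequalities to equalities — but organizes the argument in the opposite order: it first proves closure of $S_{max}$ under addition directly, then establishes that $z+p_z$ is a maximal positive element independent of $z$ and $p_z$, then that $2p_z+z$ is a maximal inverse, and only at the very end proves the absorption property $S_{max}+S\subseteq S_{max}$. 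As a result the paper runs the ``take $y$ above the candidate element and squeeze'' argument three separate times (for $y+z$, for $z+p_z$, and for $2p_z+z$), whereas you run it once, for $x+s$ with $s\in S$ arbitrary, and then harvest closure under addition, membership of the neutral element, and membership of the inverse as immediate corollaries. Your decomposition is the more economical of the two and arguably isolates the real content of the proposition (the single absorption lemma) more cleanly; the paper's version has the minor virtue of exhibiting the neutral element and the inverse by explicit formulas ($e_{S_{max}}=z+p_z$ and $-z=2p_z+z$) before absorption is available. The only cosmetic omission is that you do not mention the case $S_{max}=\emptyset$, which the statement allows and which is of course vacuous.
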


\begin{proof}
The empty case is trivial. Let us suppose that $S_{max}$ is not empty. By assumption, for any $x\in S$, there exists at least one element $p_x\in S$, such that $x+p_x\geq 0$. We first show that $S_{max}$ is closed under addition.

Let $y,z$ be elements in $S_{max}$ and let $x\in S$ be such that $x\geq y+z$. We first have $x+p_z\geq y+z+p_z\geq y$ and $x+p_y\geq z+y+p_y\geq z$, which gives us the following equalities: $x+p_z=y+z+p_z = y$ and $x+p_y=z+y+p_y= z$. Obviously $x\leq x+p_z+z= x+p_z+x+p_y= y+z$ and since $x\geq y+z$, we have $x=y+z$ which tells us that $S_{max}$ is closed under addition.

Now, let us show that $S_{max}$ has neutral element. We first prove that for any $z\in S_{max}$ and any $p_z\in S$ such that $z+p_z\geq 0$,  the element $z+p_z$ is a positive element of $S_{max}$ that does not depend on $z$ nor $p_z$:

Let $z$ and $p_z$ be such elements and let $x\in S$ be such that $x\geq z+p_z$. We know that for any $y\in S_{max}, \, y+z+p_z=y$. In particular, $2z+p_z=z$. Also, $x+z\geq 2z+p_z=z$. Hence $x+z=z$. Finally compute that $x\leq x+z+p_z=z+p_z$. Therefore $x=z+p_z$, that is, $z+p_z\in S_{max}$. Further, for any $y,z$ elements of $S_{max}$, we have $y+p_y+z+p_z\geq z+p_z$, $y+p_y$, which by what we have just proved gives us $y+p_y= y+p_y+z+p_z=z+p_z$. Hence, the positive element  $e_{S_{max}}:=y+p_y$ belongs to $S_{max}$ and does not depend on $y$ and $p_y$. Now let $z\in S_{max}$. Since $e_{S_{max}}\geq 0$, we obtain $z+e_{max}\geq z$ and we get that $z+e_{S_{max}}=z$ for any $z\in S_{max}$. Thus, $e_{S_{max}}$ is the neutral element for $(S_{max},+)$ and the unique positive element of $S_{max}$. In other words, $S_{max}$ is an abelian monoid with neutral element its unique positive element $e_{S_{max}}$. 

Then on, let us prove that any element has an additive inverse. We already know that $z+(2p_z+z)=e_{S_{max}}$ for any $z\in S_{max}$. Let us show that $2p_z+z$ belongs to $S_{max}$ for any $z\in S_{max}$ and any $p_z\in S$ such that $z+p_z\geq 0$. Let $x\geq 2p_z+z$. Then $x+z\geq e_{S_{max}}$, hence $x+z=e_{S_{max}}$. On the other hand, $x\leq x+z+p_z= e_{S_{max}}+p_z= 2p_z+z$. Therefore $2p_z+z$ belongs to $S_{max}$ and is the (unique) inverse of $z$, which ends the proof that $S_{max}$ is an abelian group. 

Lastly, let us show the absorption property. Let $x\in S$ and let $p\in S_{max}$, we know there exists $y\in S$ such that $x+y\geq 0$. Hence $x+y+p\geq p$. Let $z\in S$ be such that $z\geq x+p$. We have $z+y\geq x+y+p=p$ and hence $z+y=p$. Now since $x+y\geq 0$, we have $z\geq x+p=x+z+y\geq z$ which gives us $z=x+p$, that is, $x+p\in S_{max}$ for any $x\in S$ and $p\in S_{max}$.
\end{proof}

We note that a positively directed $\Cu^\sim$-semigroup $S$ might not have maximal elements. However if it does, then $S$ a unique positive maximal element which is the neutral element for $S_{max}$. Also, whenever $S$ is simple or countably-based, the existence of such a maximal positive element is ensured and $S_{max}$ is not empty. 

As a result, whenever $A$ is either a simple or separable $\CatCa$-algebra of stable rank one, then $\Cu_1(A)_{max}$ is an abelian group whose neutral element is $e_{S_{max}}:=(\infty_{\Cu(A)},0_{\K_1(A)})$. In fact, we will see that under such hypothesis, we have $\Cu_1(A)_{max}\simeq \K_1(A)$.

\begin{prop}
\label{dfn:Sinfinite}
Let $\alpha:S\longrightarrow T$ be a $\Cu^\sim$-morphism between positively directed $\Cu^\sim$-semigroups $S,T$ that have maximal elements.
Then $\alpha_{max}:= \alpha_{|S_{max}} + e_{T_{max}}$ is a $\AbGp$-morphism from $S_{max}$ to $T_{max}$.
\end{prop}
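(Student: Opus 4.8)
The plan is to lean directly on the structural result of \autoref{prop:PCABGP}, which already endows both $S_{max}$ and $T_{max}$ with the structure of absorbing abelian groups whose unique positive neutral elements are $e_{S_{max}}$ and $e_{T_{max}}$. Since the group operation on each of these coincides with the restriction of the ambient semigroup addition, the whole statement reduces to two elementary verifications: that $\alpha_{max}$ actually takes values in $T_{max}$, and that it is compatible with addition. Nothing deeper is needed, as all the genuine work has been front-loaded into \autoref{prop:PCABGP}.

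First I would check that $\alpha_{max}$ is well defined as a map into $T_{max}$. For any $x\in S_{max}$ the element $\alpha(x)$ lies in $T$, and the absorption property of \autoref{prop:PCABGP} guarantees that $t+e_{T_{max}}\in T_{max}$ for every $t\in T$. Taking $t:=\alpha(x)$, we get that $\alpha_{max}(x):=\alpha(x)+e_{T_{max}}$ is a genuine element of $T_{max}$. This is precisely the purpose of the shift by $e_{T_{max}}$: the bare restriction $\alpha_{|S_{max}}$ need not send maximal elements to maximal elements, whereas adding the positive neutral element corrects this automatically via absorption.

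Next I would verify that $\alpha_{max}$ respects addition. As $\alpha$ is a $\Cu^\sim$-morphism, it is in particular a monoid morphism, so $\alpha(x+y)=\alpha(x)+\alpha(y)$ for $x,y\in S_{max}$. Using that the neutral element of a group is idempotent, that is, $e_{T_{max}}+e_{T_{max}}=e_{T_{max}}$, one computes
\[
\alpha_{max}(x+y)=\alpha(x)+\alpha(y)+e_{T_{max}}=(\alpha(x)+e_{T_{max}})+(\alpha(y)+e_{T_{max}})=\alpha_{max}(x)+\alpha_{max}(y).
\]
Since $T_{max}$ is a group by \autoref{prop:PCABGP} and $\alpha_{max}$ preserves the operation, it automatically carries the neutral element $e_{S_{max}}$ to $e_{T_{max}}$ (by the usual cancellation argument in $T_{max}$), and hence $\alpha_{max}$ is a bona fide $\AbGp$-morphism from $S_{max}$ to $T_{max}$.

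I do not anticipate any serious obstacle. The only points deserving a moment of care are the recognition that the group operations on $S_{max}$ and $T_{max}$ are the ambient semigroup additions, so that the semigroup morphism $\alpha$ is automatically compatible with them, together with the idempotency $e_{T_{max}}+e_{T_{max}}=e_{T_{max}}$ that absorbs the extra copy of the neutral element appearing when one expands the right-hand side.
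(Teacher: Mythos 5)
Your proposal is correct and follows essentially the same route as the paper: well-definedness into $T_{max}$ via the absorption property of \autoref{prop:PCABGP}, and additivity via the monoid-morphism property of $\alpha$ together with the idempotency $e_{T_{max}}+e_{T_{max}}=e_{T_{max}}$. The paper's own proof is exactly this computation, stated slightly more tersely.
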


\begin{proof}
Let us first show that $\alpha_{max}$ is a  group morphism. For any $s\in S_{max}$, we know that $(\alpha(s)+e_{T_{max}})\in T_{max}$. Now, since $\alpha$ is a $\Cu^\sim$-morphism, we have $\alpha_{max}(s_1)+\alpha_{max}(s_2)=\alpha(s_1)+\alpha(s_2)+2 e_{T_{max}}=\alpha(s_1+s_2)+e_{T_{max}}=\alpha_{max}(s_1+s_2)$, for any $s_1,s_2$ elements of $S_{max}$. 
\end{proof}

As with $\nu_+$ and $\nu_c$, we define a functor $\nu_{max}$ that recovers the maximal elements of a positively directed $\Cu^\sim$-semigroup as follows:
\vspace{-0,25cm}\[
	\begin{array}{ll}
		\nu_{max}: \Cu^\sim \longrightarrow \AbGp\\
		\hspace{1,26cm} S \longmapsto S_{max}\\
		\hspace{1,3cm} \alpha \longmapsto \alpha_{max}
	\end{array}
\] 

It is left to the reader to prove that $\nu_{max}$ is a well-defined functor. Also, we specify that to be thoroughly defined as a functor, $\nu_{max}$ should have as domain the full subcategory of positively directed $\Cu^\sim$-semigroups that have maximal elements, that we also denote $\Cu^\sim$. Observe that $\Cu_1(\CatCa_{\sr1,\sigma})$ belongs to the latter full subcategory, where $\CatCa_{\sr1,\sigma}$ is the full subcategory of separable $\CatCa$-algebras of stable rank one.

\subsection{Link with \texorpdfstring{$\Cu$}{Cu} and \texorpdfstring{$\K_1$}{K1}}
Recall that for a positively directed $\Cu^\sim$-semigroup $S$ that has maximal elements, we have $S_+\in \Cu$ and that $S_{max}\in\AbGp$; see \autoref{prop:PCABGP}. In fact, both categories $\Cu$ and $\AbGp$ can be seen as subcategories of $\Cu^\sim$ -by defining an order as the equality for the case of groups-.
Therefore, in what follows, we consider $\nu_+$ and $\nu_{max}$ as functors with codomain $\Cu^\sim$.

\begin{dfn}
\label{dfn:vch}
Let $S$ be a positively directed $\Cu^\sim$-semigroup that has maximal elements. Let us define two $\Cu^\sim$-morphisms that link $S$ to $S_+$ on the one hand, and to $S_{max}$ on the other hand, as follows:
\vspace{0cm}\[
	\begin{array}{ll}
		i: S_+\overset{\subseteq}\hooklongrightarrow S \hspace{3cm} j:S \twoheadrightarrow S_{max}\\
		\hspace{0,7cm}s\longmapsto s \hspace{3,5cm}s\longmapsto s+e_{S_{max}}
	\end{array}
\vspace{-0,1cm}\]
\end{dfn}

In the next theorem, we use the picture of the $\Cu_1$-semigroup obtained from \autoref{prop:newpicture} (see \autoref{prg:newpicture}).

\begin{thm}
\label{thm:naturaltransfolma}
Let $A$ be either a separable or a simple $\sigma$-unital $\CatCa$-algebra of stable rank one. We have the following natural isomorphisms in $\Cu$ and $\AbGp$ respectively:
\vspace{-0cm}\[
	\hspace{0cm}\begin{array}{ll}
		\Cu_1(A)_+\simeq \Cu(A) \hspace{2,4cm} \Cu_1(A)_{max}\simeq \K_1(A)\\
		\hspace{0,35cm}(x,0)\longmapsto x \hspace{3,3cm} (\infty_A,k)\longmapsto k
	\end{array}
\]
In fact, we have the following natural isomorphisms: $\nu_+\circ\Cu_1\simeq \Cu$ and $\nu_{max}\circ\Cu_1\simeq \K_1$.
\end{thm}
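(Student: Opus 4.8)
The plan is to work entirely inside the concrete picture of $\Cu_1(A)$ furnished by \autoref{prop:newpicture}, where an element is a pair $(x,k)$ with $x\in\Cu(A)$ and $k\in\K_1(I_x)$, and where a morphism acts by $\Cu_1(\phi)(x,k)=(\Cu(\phi)(x),\K_1(\phi_{|I})(k))$, as recorded in \autoref{lma:newpicturemorph} and \autoref{prg:newpicture}. (This is legitimate in both cases of the hypothesis: for a simple $\sigma$-unital $A$ one has $\Lat_f(A)=\Lat(A)=\{0,A\}$, which coincides with the $\sigma$-unital ideals, so the standing assumption of \autoref{prop:newpicture} holds.) The two displayed isomorphisms are then read off directly, and the real content is the two naturality statements.

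For the positive cone, recall from the remark following \autoref{prop:newpicture} that the positive elements of $\Cu_1(A)$ are exactly those of the form $(x,0)$, so $(x,0)\mapsto x$ is the announced $\Cu$-isomorphism $\Cu_1(A)_+\simeq\Cu(A)$. Naturality is immediate from \autoref{lma:newpicturemorph}(ii): since $\K_1(\phi_{|I})(0)=0$, we have $\Cu_1(\phi)(x,0)=(\Cu(\phi)(x),0)$, so the square relating $\nu_+\circ\Cu_1$ and $\Cu$ commutes on the nose, giving $\nu_+\circ\Cu_1\simeq\Cu$.

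For the maximal elements I would first invoke the hypothesis: whether $A$ is separable or simple and $\sigma$-unital, $\Cu(A)$ has a largest element $\infty_A$ with $I_{\infty_A}=\Cu(A)$, hence $\K_1(I_{\infty_A})=\K_1(A)$, so $\Cu_1(A)_{max}$ is a genuine abelian group by \autoref{prop:PCABGP}. Using the order of \autoref{dfn:newpicture} I would characterize the maximal elements: if $x\neq\infty_A$ then $(x,k)<(\infty_A,\delta_{I_xI_{\infty_A}}(k))$, so no such pair is maximal; conversely any $(y,l)\geq(\infty_A,k)$ forces $y=\infty_A$ and, since $\delta_{I_{\infty_A}I_{\infty_A}}=\id$, forces $l=k$. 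Thus $\Cu_1(A)_{max}=\{(\infty_A,k)\mid k\in\K_1(A)\}$, and because $\infty_A+\infty_A=\infty_A$ and $\delta_{I_{\infty_A}I_{\infty_A}}=\id$, the addition inherited from \autoref{dfn:newpicture} reads $(\infty_A,k)+(\infty_A,l)=(\infty_A,k+l)$ with neutral element $(\infty_A,0)$. Hence $(\infty_A,k)\mapsto k$ is an $\AbGp$-isomorphism onto $\K_1(A)$.

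The main work, and the step I expect to be most delicate, is the naturality of this last isomorphism, because $\nu_{max}(\Cu_1(\phi))=\alpha_{max}=\alpha_{|S_{max}}+e_{T_{max}}$ carries the corrective summand $e_{T_{max}}$ from \autoref{dfn:Sinfinite}, and because a general $\phi\colon A\to B$ need not be full, so that $\Cu(\phi)(\infty_A)$ is only the largest element $\infty_J$ of the ideal $\Cu(J)$ generated by $\phi(A)$, with $J:=\overline{B\phi(A)B}$. Writing $\alpha:=\Cu_1(\phi)$, \autoref{lma:newpicturemorph} gives $\alpha(\infty_A,k)=(\infty_J,\K_1(\phi_{|A})(k))$ for $\phi_{|A}\colon A\to J$. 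Adding $e_{\Cu_1(B)_{max}}=(\infty_B,0)$ and evaluating the addition of \autoref{dfn:newpicture}, the first coordinate collapses to $\infty_J+\infty_B=\infty_B$ while the second becomes $\delta_{\Cu(J)\Cu(B)}(\K_1(\phi_{|A})(k))$. The key point is that $\delta_{\Cu(J)\Cu(B)}=\K_1(\iota_J)$ for the inclusion $\iota_J\colon J\hookrightarrow B$ (by \autoref{dfn:delta}), so functoriality of $\K_1$ together with the factorization $\phi=\iota_J\circ\phi_{|A}$ yields $\alpha_{max}(\infty_A,k)=(\infty_B,\K_1(\phi)(k))$; thus the corrective term $e_{T_{max}}$ is precisely what transports $\K_1(J)$ back into $\K_1(B)$. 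Translating through the isomorphisms of the previous paragraph, both composites send $(\infty_A,k)$ to $\K_1(\phi)(k)$, establishing $\nu_{max}\circ\Cu_1\simeq\K_1$.
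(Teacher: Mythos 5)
Your proposal is correct and follows essentially the same route as the paper: identify positives as $(x,0)$ and maximals as $(\infty_A,k)$ in the picture of \autoref{prop:newpicture}, then verify naturality by computing $\Cu_1(\phi)_{max}(\infty_A,k)=(\Cu(\phi)(\infty_A),\K_1(\phi_{|A})(k))+(\infty_B,0)=(\infty_B,\K_1(\phi)(k))$. You simply spell out two steps the paper leaves implicit, namely the characterization of $\Cu_1(A)_{max}$ from the order of \autoref{dfn:newpicture} and the identity $\delta_{\Cu(J)\Cu(B)}\circ\K_1(\phi_{|A})=\K_1(\phi)$ via the factorization $\phi=\iota_J\circ\phi_{|A}$.
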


\begin{proof}
Let us prove the theorem for $A$ separable and the simple case is proven similarly. We know that any positive element of $\Cu_1(A)$ is of the form $(x,0)$ for some $x\in\Cu(A)$ and that $\infty_A:=[s_{A\otimes\mathcal{K}}]=\sup\limits_{n\in\N}n [s_A]$ is the largest element of $\Cu(A)$, where $s_A$ is a strictly positive element of $A$. We also know that any maximal element of $\Cu_1(A)$ is of the form $(\infty_A,k)$ for some $k\in \K_1(A)$. Hence we easily get the two canonical isomorphisms of the statement. 
Now let $\phi:A\longrightarrow B$ be a $^*$-homomorphism, let $(x,0)\in\Cu_1(A)_+$ and let $(\infty_A,k)\in\Cu_1(A)_{max}$. We have that $\Cu_1(\phi)_+(x,0)=(\Cu(\phi)(x),0)$ and that 
\vspace{0cm}\begin{align*}\Cu_1(\phi)_{max}(\infty_A,k)&=(\Cu(\phi)(\infty_A),\Cu_1(\phi)_A(k))+(\infty_B,0)\\
&=(\infty_B,\delta_{I_{\phi(\infty_A)}B}\circ\Cu_1(\phi)_A(k))\\
&=(\infty_B,\K_1(\phi)(k)).
\end{align*} 
This exactly gives us that \[
\xymatrix{
\Cu_1(A)_{+} \ar[d]_{\Cu_1(\phi)_{+}}\ar[r]^{\simeq} & \Cu(A)\ar[d]^{\Cu(\phi)}
&&\Cu_1(A)_{max} \ar[d]_{\Cu_1(\phi)_{max}}\ar[r]^{\simeq} & \K_1(A)\ar[d]^{\K_1(\phi)}
\\
\Cu_1(B)_+\ar[r]_{\simeq} & \Cu(B)&&\Cu_1(B)_{max}\ar[r]_{\simeq} & \K_1(B)
}
\]
are commutative squares.
\end{proof}

\subsection{Recovering an invariant}
\label{sec:ClassificationMachinery}
We will now define the categorical notion of \textquoteleft recovering\textquoteright\, a functor. This allows us to check whether information and classification results of an invariant can be recovered from another one. 
To that end, we introduce the notion of \emph{weakly-complete} invariant: an isomorphism at the level of the codomain category implies an isomorphism at the level of $\CatCa$-algebras without knowing whether it actually corresponds to a lift.

\begin{dfn}

Let $\mathcal{C}, \mathcal{D} $ be arbitrary categories and let $I:\CatCa\longrightarrow \mathcal{C}$ and $J:\CatCa\longrightarrow \mathcal{D}$ be (covariant) functors. Let $H:\mathcal{D}\longrightarrow \mathcal{C}$ be a functor such that there exists a natural isomorphism $\eta:H\circ J\simeq I$. Then we say we can \emph{recover $I$ from $J$ through $H$}.
\end{dfn}

\begin{thm}
\label{thm:recoverfunctor}

Let $\mathcal{C}, \mathcal{D} $ be arbitrary categories and let $I:\CatCa\longrightarrow \mathcal{C}$ and $J:\CatCa\longrightarrow \mathcal{D}$ be (covariant) functors. Suppose that there exists a functor $H:\mathcal{D}\longrightarrow \mathcal{C}$ such that we recover $I$ from $J$ through $H$.

(i) If $I$ is a complete invariant for a class $\CatCa_I$ of $\CatCa$-algebras, then $J$ is a weakly-complete invariant for $\CatCa_I$.

(ii) If $I$ classifies homomorphisms from a class $\CatCa_1$ of $\CatCa$-algebra to another class $\CatCa_2$ of $\CatCa$-algebra, then $J$ weakly classifies homomorphisms from $\CatCa_1$ to $\CatCa_2$.

If moreover $H$ is faithful, then $J$ is a complete invariant for $\CatCa_I$ and $J$ classifies homomorphisms from $\CatCa_1$ to $\CatCa_2$. In this case, we say that we can \emph{fully recover $I$ from $J$ through $H$}.
\end{thm}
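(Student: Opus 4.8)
The plan is to use the natural isomorphism $\eta\colon H\circ J \simeq I$ as a dictionary transporting morphisms between the two codomain categories, together with the elementary fact that every functor preserves isomorphisms. The whole argument is a diagram chase; the only real care needed is to distinguish the statements asserting mere \emph{existence} of a lift (the weak conclusions) from those requiring the lift to realize a \emph{prescribed} morphism (the conclusions available once $H$ is faithful).

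For part (i), I would begin with $A,B\in\CatCa_I$ together with an isomorphism $\beta\colon J(A)\to J(B)$ in $\mathcal{D}$. As $H$ is a functor, $H(\beta)\colon H(J(A))\to H(J(B))$ is an isomorphism in $\mathcal{C}$, and conjugating by the (invertible) components of $\eta$ yields an isomorphism
\[
\psi := \eta_B\circ H(\beta)\circ\eta_A^{-1}\colon I(A)\longrightarrow I(B).
\]
Completeness of $I$ on $\CatCa_I$ then provides a $^*$-isomorphism $\phi\colon A\to B$ lifting $\psi$; in particular $A\simeq B$, which is precisely the weak-completeness of $J$. Part (ii) is proved identically: one replaces the isomorphism $\beta$ by an arbitrary morphism $\beta\colon J(A)\to J(B)$ with $A\in\CatCa_1$, $B\in\CatCa_2$, so that $\psi=\eta_B\circ H(\beta)\circ\eta_A^{-1}$ is now a morphism $I(A)\to I(B)$; the hypothesis that $I$ classifies homomorphisms supplies a $^*$-homomorphism $A\to B$, giving the weak classification.

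For the faithful case I would retain the $\phi$ produced above and upgrade the conclusion from ``$A\simeq B$'' to ``$\phi$ realizes $\beta$''. The decisive ingredient is naturality of $\eta$ evaluated at $\phi$, which gives $I(\phi)=\eta_B\circ H(J(\phi))\circ\eta_A^{-1}$. Since $\phi$ was chosen with $I(\phi)=\psi=\eta_B\circ H(\beta)\circ\eta_A^{-1}$, cancelling the invertible maps $\eta_A,\eta_B$ leaves $H(J(\phi))=H(\beta)$, whereupon faithfulness of $H$ forces $J(\phi)=\beta$. Thus $\phi$ is a genuine lift of $\beta$, so $J$ is a complete invariant for $\CatCa_I$, and the same reasoning applied to morphisms shows that $J$ classifies homomorphisms from $\CatCa_1$ to $\CatCa_2$.

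The main --- and essentially only --- subtlety is this last inference, passing from $H(J(\phi))=H(\beta)$ back to $J(\phi)=\beta$: it is exactly here that faithfulness of $H$ is indispensable, for without it $H$ could a priori collapse the discrepancy between $J(\phi)$ and $\beta$, which is precisely the gap separating a weakly-complete invariant from a genuinely complete one. Everything else is the formal bookkeeping that functors preserve isomorphisms and that conjugation by a natural isomorphism is a bijection on the relevant hom-sets.
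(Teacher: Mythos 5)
Your part (i) and the faithful upgrade of the completeness statement are correct and follow essentially the same route as the paper: push the isomorphism through $H$, transport it along the components of the natural isomorphism $\eta$, lift via completeness of $I$, and, once $H$ is faithful, use naturality of $\eta$ at the lift $\phi$ to obtain $H(J(\phi))=H(\beta)$ and then cancel $H$.

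Your treatment of part (ii), however, misses the substance of what ``(weakly) classifying homomorphisms'' asserts here. Classification of homomorphisms in the sense used in this paper (see the cited Elliott--Gong theorems) is an existence \emph{and uniqueness} statement: every morphism of invariants lifts to a $^*$-homomorphism, and any two $^*$-homomorphisms inducing the same morphism of invariants are approximately unitarily equivalent. The paper's proof of the weak half of (ii) is entirely the uniqueness argument: if $J(\phi)=J(\psi)$, then $H\circ J(\phi)=H\circ J(\psi)$, hence $I(\phi)=I(\psi)$ by the natural isomorphism, and since $I$ classifies homomorphisms one concludes $\phi\sim_{aue}\psi$. Your proof of (ii) only produces \emph{some} $^*$-homomorphism $A\to B$ from a morphism $J(A)\to J(B)$ and never addresses uniqueness; consequently the closing claim that ``the same reasoning applied to morphisms'' settles the faithful case of (ii) is also incomplete, since full classification requires the uniqueness half as well. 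The missing step is short, but it is a genuinely different argument from the one in (i), so part (ii) cannot be dispatched as ``proved identically.''
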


\begin{proof}
Let $I,J$ and $H$ be functors as in the theorem.

(i) Suppose that $I$ is a complete invariant for $\CatCa_I$. Take any two $\CatCa$-algebras $A,B\in\CatCa_I$. If there exists an isomorphism $\alpha:J(A)\simeq J(B)$, by functoriality, we get an isomorphism $H(\alpha):H\circ J(A)\simeq H\circ J(B)$. Using the natural isomorphism $H\circ J\simeq I$, we know that $H(\alpha)$ gives us an isomorphism $\beta:I(A)\simeq I(B)$. By hypothesis, we can lift $\beta$ to an isomorphism in the category $\CatCa$. That is, there exists a $^*$-isomorphism $\phi:A\simeq B$ such that $I(\phi)=\beta$. We have just shown that $J$ is weakly-complete for $\CatCa_I$.

 Suppose now that $H$ is faithful. Then the natural isomorphism exactly gives us that $H\circ J(\phi)=H(\alpha)$. Now since $H$ is faithful, we conclude that $J(\phi)=\alpha$. That is, $J$ is a complete invariant for $\CatCa_I$.
 
(ii) Suppose that $I$ classifies homomorphisms from $A$ to $B$. Let $\alpha:J(A)\longrightarrow J(B)$ be any morphism in $\mathcal{D}$. If $\phi,\psi:A\longrightarrow B$ are $
^*$-homomorphisms such that $J(\phi)=J(\psi)=\alpha$, then composing with $H$, we get $H\circ J(\phi)=H\circ J(\psi)=H(\alpha)$. Thus, $I(\phi)=I(\psi)$, which gives us, by hypothesis, that $\phi\sim_{aue}\psi$. Hence $J$ weakly classifies homomorphisms from $A$ to $B$.

Finally if $H$ is faithful, then for any $\alpha:J(A)\simeq J(B)$, using again the natural isomorphism $H\circ J\simeq I$, we obtain: For any lift $\phi:A\longrightarrow B$ of $\beta:I(A)\longrightarrow I(B)$, where $\beta$ is the morphism obtained from $H(\alpha)$ as in the proof of (i) above, we have $H\circ J(\phi)=H(\alpha)$. Since $H$ is faithful, we get that $\alpha=J(\phi)$, from which we deduce that $J$ classifies homomorphisms from $A$ to $B$.
\end{proof}
We illustrate all the above with the following results:
\begin{prop}
By \autoref{thm:naturaltransfolma}, we can recover $\Cu$ and $\K_1$ from $\Cu_1$ through $\nu_+$ and $\nu_{max}$ respectively. As to be expected, neither $\nu_+$ nor $\nu_{max}$ are faithful functors. 
\end{prop}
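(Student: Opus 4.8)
The plan is to split the statement into its two assertions and handle them separately. For the recovery claims, I would simply unwind the definition of recovering an invariant: taking $J:=\Cu_1$, $I:=\Cu$ and $H:=\nu_+$, the natural isomorphism $\nu_+\circ\Cu_1\simeq\Cu$ furnished by \autoref{thm:naturaltransfolma} is exactly the datum $\eta\colon H\circ J\simeq I$ required, so $\Cu$ is recovered from $\Cu_1$ through $\nu_+$; symmetrically, the natural isomorphism $\nu_{max}\circ\Cu_1\simeq\K_1$ of \autoref{thm:naturaltransfolma} recovers $\K_1$ from $\Cu_1$ through $\nu_{max}$ (on the subcategory of separable or simple $\sigma$-unital algebras where $\nu_{max}$ is defined). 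This part is essentially a restatement and requires no real work.

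For the non-faithfulness, the idea is that each functor discards a genuine piece of the invariant: $\nu_+$ sees only the positive cone (hence nothing of the $\K_1$-data), while $\nu_{max}$ sees only the top layer (hence nothing of the proper $\Cu$-data below it). I would make this precise by exhibiting, on the single semigroup $S:=\Cu_1(\mathcal{C}(\T))$, two distinct $\Cu^\sim$-endomorphisms that the relevant functor cannot separate. Using the picture of \autoref{prop:newpicture}, I write the elements of $S$ as pairs $(x,k)$ with $x\in\Cu(\mathcal{C}(\T))$ and $k\in\K_1(I_x)$; the positive cone is then $\{(x,0)\}$ and, since $\mathcal{C}(\T)$ is separable of stable rank one, the maximal elements form the group $\{(\infty,k)\}\simeq\K_1(\mathcal{C}(\T))\simeq\Z$.

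For $\nu_+$, I would take the map $g_1\colon(x,k)\mapsto(x,-k)$, which fixes the positive cone pointwise but is not the identity (it moves any $(x,k)$ with $k\neq0$, and such elements exist since $\K_1(\mathcal{C}(\T))\neq0$); so $\nu_+(g_1)=\nu_+(\id_S)=\id_{S_+}$ yet $g_1\neq\id_S$. For $\nu_{max}$, I would take $g_2\colon(x,k)\mapsto(2x,k)$, which fixes every maximal element (as $2\infty=\infty$) but is not the identity (since $2x\neq x$ for a generic $x$); so $(g_2)_{max}=\id_{S_{max}}=(\id_S)_{max}$ yet $g_2\neq\id_S$.

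The only genuine verification — and the step I expect to be the main (modest) obstacle — is confirming that $g_1$ and $g_2$ are indeed $\Cu^\sim$-endomorphisms, i.e.\ that they respect the order and addition of \autoref{dfn:newpicture} as well as suprema and the compact-containment relation. Both reduce to two elementary facts: that each $\delta_{I_xI_y}$ is a group morphism (this handles $g_1$), and that $2x$ and $x$ generate the same ideal, so that $I_{2x}=I_x$ and $\delta_{I_{2x}I_{2y}}=\delta_{I_xI_y}$, together with $x\mapsto 2x$ being a $\Cu$-endomorphism of $\Cu(\mathcal{C}(\T))$ (this handles $g_2$). Given these, compatibility with suprema and with $\ll$ follows directly from the descriptions in \autoref{lma:O1} and \autoref{prop:llCU1}. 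I would close by remarking that the same phenomenon occurs already for any separable algebra with nontrivial $\K_1$ (for $\nu_+$), or with nontrivial $\Cu$ strictly below its top element (for $\nu_{max}$), so that the failure of faithfulness is generic, as the phrase ``as to be expected'' suggests.
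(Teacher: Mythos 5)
Your proof is correct. For the recovery claims it coincides with the paper's entire proof, which consists of the single line ``use the natural isomorphisms of \autoref{thm:naturaltransfolma}'': with $J=\Cu_1$, $I=\Cu$ (resp.\ $\K_1$) and $H=\nu_+$ (resp.\ $\nu_{max}$), the natural isomorphisms are literally the data required by the definition of recovering an invariant. Where you genuinely go beyond the paper is the non-faithfulness assertion, which the paper states without any argument. Your two endomorphisms of $S=\Cu_1(\mathcal{C}(\T))$ do the job: $g_1\colon(x,k)\mapsto(x,-k)$ is a $\Cu^\sim$-morphism because each $\delta_{I_xI_y}$ is a group homomorphism (so order, addition, suprema and $\ll$ are preserved via \autoref{prg:deltas}, \autoref{lma:O1} and \autoref{prop:llCU1}), it fixes $S_+=\{(x,0)\}$ pointwise, and it differs from $\id_S$ since $\K_1(\mathcal{C}(\T))\simeq\Z$ has elements with $k\neq-k$; and $g_2\colon(x,k)\mapsto(2x,k)$ is well defined and additive because $I_{2x}=I_x$, fixes every maximal element $(\infty,k)$ since $2\infty=\infty$, yet moves, say, $(1_{\T},0)$. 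So $\nu_+(g_1)=\nu_+(\id_S)$ and $(g_2)_{max}=(\id_S)_{max}$ while $g_1,g_2\neq\id_S$. This supplies a verification the paper omits, at the modest cost of the routine checks you correctly identify; the paper's version buys brevity by treating non-faithfulness as an expected side remark.
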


\begin{proof}
Use the natural isomorphisms of \autoref{thm:naturaltransfolma}.
\end{proof}

\begin{cor}
Let $\phi,\psi:A\longrightarrow B$ be two $^*$-homomorphism between separable $\CatCa$-algebras of stable rank one. If $\Cu_1(\phi)=\Cu_1(\psi)$ then $\Cu(\phi)=\Cu(\psi)$ and $\K_1(\psi)=\K_1(\phi)$.
\end{cor}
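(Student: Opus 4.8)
The plan is to deduce both equalities by applying the two recovery functors $\nu_+$ and $\nu_{max}$ to the hypothesis $\Cu_1(\phi)=\Cu_1(\psi)$ and then transporting the resulting identities along the natural isomorphisms established in \autoref{thm:naturaltransfolma}. Since $A$ and $B$ are separable $\CatCa$-algebras of stable rank one, $\Cu_1(A)$ and $\Cu_1(B)$ are positively directed $\Cu^\sim$-semigroups that have maximal elements (as recorded after \autoref{prop:PCABGP}); hence both $\nu_+$ and $\nu_{max}$ are defined on them, and $\Cu_1(\phi),\Cu_1(\psi)$ lie in their domains.

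First I would treat $\Cu$. Because $\nu_+$ is a functor, applying it to the equality $\Cu_1(\phi)=\Cu_1(\psi)$ yields $\nu_+(\Cu_1(\phi))=\nu_+(\Cu_1(\psi))$, that is, $(\nu_+\circ\Cu_1)(\phi)=(\nu_+\circ\Cu_1)(\psi)$. Let $\eta:\nu_+\circ\Cu_1\simeq\Cu$ be the natural isomorphism of \autoref{thm:naturaltransfolma}. Naturality gives the commuting square relating $(\nu_+\circ\Cu_1)(\phi)$ to $\Cu(\phi)$ through the isomorphisms $\eta_A$ and $\eta_B$, so that
\[
\Cu(\phi)=\eta_B\circ(\nu_+\circ\Cu_1)(\phi)\circ\eta_A^{-1},
\qquad
\Cu(\psi)=\eta_B\circ(\nu_+\circ\Cu_1)(\psi)\circ\eta_A^{-1}.
\]
Since the middle factors coincide, we conclude $\Cu(\phi)=\Cu(\psi)$.

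Finally I would repeat the argument verbatim with $\nu_{max}$ in place of $\nu_+$ and the natural isomorphism $\nu_{max}\circ\Cu_1\simeq\K_1$ of \autoref{thm:naturaltransfolma} in place of $\eta$, obtaining $\K_1(\phi)=\K_1(\psi)$. I do not anticipate a genuine obstacle here: the entire content is that $\nu_+$ and $\nu_{max}$ are functors, hence send equal morphisms to equal morphisms, and that conjugating by a natural isomorphism preserves equality of morphisms. Equivalently, one may simply invoke the recovery of $\Cu$ and $\K_1$ from $\Cu_1$ through $\nu_+$ and $\nu_{max}$ already recorded in the Proposition immediately preceding this Corollary.
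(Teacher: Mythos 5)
Your proposal is correct and is essentially the paper's own argument: the corollary is stated as an immediate consequence of the preceding proposition, whose proof is simply to invoke the natural isomorphisms $\nu_+\circ\Cu_1\simeq\Cu$ and $\nu_{max}\circ\Cu_1\simeq\K_1$ of \autoref{thm:naturaltransfolma}, exactly as you do. Your extra care in checking that $\Cu_1(A)$ and $\Cu_1(B)$ lie in the domain of $\nu_{max}$ (separability giving maximal elements) is a point the paper leaves implicit but is correctly handled.
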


\subsection{Recovering the \texorpdfstring{$\K_*$}{K*} invariant}
We now study a concrete use of \autoref{thm:recoverfunctor} to recover existing classifying functors from $\Cu_1$, and in the process, recall some classification results that have been obtained in the past. 
Here, we give some insight on $\K_*:=\K_0\oplus \K_1$. Although notations might slightly differ, all of this can be found in \cite{ELL93} and \cite{ELL96}.

An \emph{approximately homogeneous dimensional} algebra, written $\AH_d$ algebra, is an inductive limit of finite direct sums of the form $M_n(I_q)$ and $M_n(\mathcal{C}(X))$, where $I_q:=\{f\in M_q(\mathcal{C}([0,1]))\mid f(0),f(1)\in \mathbb{C}1_q\}$ is the \emph{Elliott-Thomsen dimension-drop interval algebra} and $X$ is one of the following finite connected CW complexes: $\{*\},\T,[0,1]$. Observe that we have the following inclusions: $\AF\subseteq \AI,\A\!\T\subseteq \AH_d\subseteq \1NCCW$, where $\1NCCW$ are the inductive limits of 1-dimensional non-commutative CW complexes (abbreviated one dimensional $\NCCW$ complexes).

The category of ordered groups with order-unit, written $\AbGp_{u}$, is the category whose objects are ordered groups with order-unit and morphisms are ordered group morphisms that preserve the order-unit.

\begin{dfn}(cf \cite[Definition 1.2.1]{ELL96})
Let $A$ be a (unital) $\CatCa$-algebra. We define $\K_*(A):=\K_0(A)\oplus \K_1(A)$. We also define ${\K_*(A)}_+:=\{([p]_{\K_0(A)},[v]_{\K_1(A)})\}\subseteq \K_0(A)\oplus \K_1(A)$, where $p$ is a projection in $A\otimes\mathcal{K}$ and $v$ is a unitary in the corner $p(A\otimes\mathcal{K})p$. Notice that we look at the $\K_1$ class of $v$ in $A$, that is, $[v+(1-p)]_{\K_1(A)}$. Finally, we define $1_{\K_*(A)}:=([1_A]_{\K_0},0_{\K_1})$.
\end{dfn}

\begin{prop}(cf \cite[\S 1.2.2]{ELL96})
Let $A$ and $B$ be unital $\CatCa$-algebras of stable rank one. Then \\$(\K_*(A),{\K_*(A)}_+)$ is an ordered group and $1_{\K_*(A)}\in {\K_*(A)}_+ $ is an order-unit of $\K_*(A)$. 

Thus, $(\K_*(A),{\K_*(A)}_+, 1_{\K_*(A)})\in \AbGp_{u}$.
Moreover, any $^*$-homomorphism $\phi:A\longrightarrow B$ induces an ordered group morphism $\K_0(\phi)\oplus \K_1(\phi): \K_*(A)\longrightarrow \K_*(B)$ that preserves the order-unit. Thus, we obtain a covariant functor $\K_*:AH_{d,1}\longrightarrow \AbGp_{u}$, where $AH_{d,1}$ is the category of unital $AH_d$ algebras.
\end{prop}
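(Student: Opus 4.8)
The plan is to verify the three assertions in turn: that $\K_*(A)_+$ is a positive cone making $\K_*(A)$ an ordered group, that $1_{\K_*(A)}=([1_A]_{\K_0},0)$ is an order-unit, and that $\phi\longmapsto\K_0(\phi)\oplus\K_1(\phi)$ is functorial and order-unit preserving. Since every piece of structure here is built additively out of $\K_0$ and $\K_1$, the strategy is to manipulate representatives $([p],[v])$ directly, using stability of $A\otimes\mathcal{K}$ to arrange orthogonality of projections and invoking the stable rank one hypothesis only where a $\K_1$-class must be realised by an honest unitary.

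First I would show that $\K_*(A)_+$ is a submonoid. Given $([p],[v]),([q],[w])\in\K_*(A)_+$ with $v\in\mathcal{U}(p(A\otimes\mathcal{K})p)$ and $w\in\mathcal{U}(q(A\otimes\mathcal{K})q)$, I replace $q$ by an orthogonal, unitarily equivalent projection (possible in the stable algebra $A\otimes\mathcal{K}$), so that $p\perp q$. Then $p+q$ is a projection and $v\oplus w:=v+w$ is a unitary in its corner. The one computation to record is that, for $p\perp q$, one has $(v+1-p)(w+1-q)=(v\oplus w)+1-(p+q)$ in $(A\otimes\mathcal{K})^\sim$ — because $vw=vq=pw=0$ — whence the $\K_1$-classes add: $[v]+[w]=[v\oplus w]$. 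Thus $([p],[v])+([q],[w])=([p+q],[v\oplus w])\in\K_*(A)_+$, and $0=([0],0)$ lies in the cone. I would also check that the cone generates $\K_*(A)$: for arbitrary $(g,h)$ write $g=[p]-[q]$, realise $h$ by a unitary in some corner $p_0(A\otimes\mathcal{K})p_0$, and present $(g,h)$ as a difference of two cone elements after padding the $\K_0$-data by $[p_0]$.

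Next I would prove the order-unit property. Since $A$ is unital, $[1_A]$ is an order-unit of the ordered group $\K_0(A)$, because every projection in a matrix amplification is dominated in $\K_0$ by a multiple of $1_A$. Given $(g,h)\in\K_*(A)$, the stable rank one hypothesis, via $\K_1$-surjectivity (\cite[Theorem 2.10]{R87}), lets me realise $-h$ by a unitary $w\in\mathcal{U}(A)$, viewed as a unitary in the corner $1_A(A\otimes\mathcal{K})1_A\cong A$. Choosing $n$ large enough that $(n-1)[1_A]-g=[p_1]$ for a projection $p_1\perp 1_A$ in $\K_0(A)$, I set $P:=1_A+p_1$ and $V:=w+p_1$, so that $([P],[V])=(n[1_A]-g,-h)\in\K_*(A)_+$; this is exactly $(g,h)\le n\,1_{\K_*(A)}$. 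The symmetric choice gives $-(g,h)\le n\,1_{\K_*(A)}$, so $1_{\K_*(A)}$ is an order-unit and $(\K_*(A),\K_*(A)_+,1_{\K_*(A)})\in\AbGp_u$.

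Finally, functoriality is routine: $\K_0(\phi)\oplus\K_1(\phi)$ is a group morphism, it is order-preserving because $(\phi\otimes\id)$ sends a projection $p$ to a projection and a corner unitary $v\in p(A\otimes\mathcal{K})p$ to the corner unitary $(\phi\otimes\id)(v)$ (indeed $(\phi\otimes\id)(v)$ is unitary in $(\phi\otimes\id)(p)(B\otimes\mathcal{K})(\phi\otimes\id)(p)$), and it preserves the order-unit since $\phi(1_A)=1_B$; compatibility with composition and identities is inherited from $\K_0$ and $\K_1$. I expect the genuinely delicate point to be the role of stable rank one, rather than the additive bookkeeping: it is needed for the $\K_1$-surjectivity used in the order-unit step, and, if one insists that $(\K_*(A),\K_*(A)_+)$ be a proper (antisymmetric) ordered group, to guarantee properness of the cone. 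The latter follows from the fact that stable rank one forces stable finiteness and cancellation of projections, so that $[p]+[q]=0$ in $\K_0(A)$ forces $[p]=[q]=0$; I would isolate this as the main obstacle and prove it by the finiteness argument that a projection equivalent to a subprojection of itself must equal it.
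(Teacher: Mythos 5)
Your proof is correct. Note, however, that the paper does not actually prove this proposition: it is stated with a reference to \cite[\S 1.2.2]{ELL96}, and the only commentary offered is the remark that stable rank one gives cancellation in $V(A)$, so that ${\K_0(A)}_+$ may be identified with $V(A)$ and $(\K_0(A),V(A))$ is an ordered group, together with the identification $V(A)\simeq \Cu(A)_c$. Your write-up supplies precisely the details the paper delegates to the reference, and it isolates the right points: the orthogonalization of projections in the stable algebra and the identity $(v+1-p)(w+1-q)=(v\oplus w)+1-(p+q)$ to see that the cone is a submonoid; $\K_1$-surjectivity of $\mathcal{U}(A)\to\K_1(A)$ under stable rank one to handle the $\K_1$-coordinate in the order-unit estimate; and cancellation of projections (stable finiteness) for properness of the cone. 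One small point you should add to the properness step: after cancellation forces the $\K_0$-component to vanish, you must also kill the $\K_1$-component, which follows because a positive element with $\K_0$-component $0$ is represented by a unitary in the corner $p(A\otimes\mathcal{K})p$ with $[p]=0$, hence (again by cancellation) $p=0$, and the class of $v+(1-p)=1$ in $\K_1(A)$ is trivial. With that sentence added, your argument is a complete and faithful expansion of the cited source's approach.
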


We do not give a proof of the above, but we remind the reader that whenever a $\CatCa$-algebra $A$ has stable rank one -which is the case of any $\AH_d$ algebra-, then the monoid $V(A)$ has cancellation and hence ${\K_0(A)}_+$ can be identified with $V(A)$ and thus $(\K_0(A),V(A))$ is an ordered group. 

We also recall that in the stable rank one case, the Murray-von Neumann equivalence and the Cuntz equivalence agree on the projections of $A\otimes\mathcal{K}$ and that $V(A)\simeq \Cu(A)_c$. That is, any compact element of $\Cu(A)$ is the class of some projection of $A\otimes\mathcal{K}$.

We now recall two notable classification results by means of $\K_*$ that catch our interest:

\begin{thm}(\cite[Corollary 4.9]{ELL96}, \cite[Theorem 7.3 - Theorem 7.4]{ELL93})

(i) The functor $\K_*$ is a complete invariant for (unital) $AH_{d}$ algebras of real rank zero.

(ii) Let $A,B$ be (unital) $\A\!\T$ algebras of real rank zero and let $\alpha:\K_*(A)\longrightarrow \K_*(B)$ be a scaled ordered group morphism. Then there exists a unique $^*$-homomorphism (up to approximate unitary equivalence) $\phi:A\longrightarrow B$ such that $\K_*(\phi)=\alpha$.
\end{thm}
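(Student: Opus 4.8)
The statement collects two classification theorems of Elliott, so the plan is not to reprove them but to derive them from the indicated sources after matching hypotheses: part~(i) is \cite[Corollary 4.9]{ELL96} and part~(ii) combines \cite[Theorem 7.3]{ELL93} with \cite[Theorem 7.4]{ELL93}. First I would verify that the framework agrees with these references, namely that every unital $\AH_d$ (resp.\ unital $\A\!\T$) algebra of real rank zero is an inductive limit $\lim\limits_{\longrightarrow}A_n$ of the prescribed building blocks, has stable rank one, and that the triple $(\K_*(A),{\K_*(A)}_+,1_{\K_*(A)})$ defined above coincides with the ordered-group-with-order-unit used by Elliott, so that $\K_*$ is the same $\AbGp_{u}$-valued functor.

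The engine behind both parts is the Elliott approximate-intertwining argument. For (i), starting from unital $\AH_d$ algebras $A=\lim\limits_{\longrightarrow}A_n$ and $B=\lim\limits_{\longrightarrow}B_n$ of real rank zero and a scaled ordered-group isomorphism $\alpha\colon\K_*(A)\to\K_*(B)$, I would proceed in two alternating steps. The existence step realizes each induced partial datum $\K_*(A_n)\to\K_*(B)$ by an actual $^*$-homomorphism $A_n\to B_m$ (for $m$ large) up to a prescribed tolerance; the uniqueness step shows that two such lifts carrying the same $\K_*$-information are approximately unitarily equivalent, which is what allows the partial lifts in the two directions to be spliced into an approximately commuting ladder between the systems. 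Taking the limit of this intertwining yields a $^*$-isomorphism $\phi\colon A\to B$ with $\K_*(\phi)=\alpha$, establishing completeness. Part~(ii) is the homomorphism-level instance of the very same machinery: a single scaled ordered-group morphism $\alpha\colon\K_*(A)\to\K_*(B)$ is fed into the existence and uniqueness theorems for $\A\!\T$ building blocks to produce a $^*$-homomorphism $\phi$, unique up to approximate unitary equivalence, with $\K_*(\phi)=\alpha$.

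The genuinely hard content — which I would leave to the references — is concentrated in the existence and uniqueness theorems for the specific blocks $M_n(I_q)$ and $M_n(\mathcal{C}(X))$ with $X\in\{\{*\},\T,[0,1]\}$; here the real-rank-zero hypothesis is exactly what supplies the abundance of projections needed to pin down and transport the $\K_0$-data and to keep the intertwining under control. Consequently the main obstacle is not logical but one of bookkeeping: installing a faithful dictionary between the ordered-group-with-order-unit conventions, the scaling, and the unit normalizations used here and those of \cite{ELL93,ELL96}, so that the morphisms and isomorphisms shuttled between the two setups are literally the same objects. Once that identification is in place, both parts follow directly from the cited results.
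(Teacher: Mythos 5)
The paper offers no proof of this theorem at all — it is stated purely as a citation of \cite[Corollary 4.9]{ELL96} and \cite[Theorem 7.3 - Theorem 7.4]{ELL93} — so your approach of deferring to those references after matching conventions is exactly what the paper does, and your sketch of the approximate-intertwining machinery is a correct (if optional) elaboration of what those references contain.
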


The aim now is to recover $\K_*$ from $\Cu_1$ and thus show that $\Cu_1$ contains more information than $\K_*$. For that purpose, we first define the category of $\Cu^\sim$-semigroups with order-unit, that we denote by $\Cu^\sim_{u}$. Further, we create a functor $H_*:\Cu^\sim_{u}\longrightarrow \AbGp_{u}$ such that $H_*\circ \Cu_1\simeq \K_*$ as functors. Moreover, restricting to an adequate subcategory of $\Cu^\sim_{u}$, we will see that $H_*$ is faithful.

\begin{dfn}
Let $S$ be a $\Cu^\sim$-semigroup. We say that $S$ has \emph{weak cancellation} if $x+z\ll y+z$ implies $x\leq y$ for $x,y,z\in S$. We say that $S$ has \emph{cancellation of compact elements} if $x+z\leq y+z$ implies $x\leq y$ for any $x,y\in S$ and $z\in S_c$.
\end{dfn}

The following property is proved using the same argument as in \cite[Proposition 2.1.3]{RobNCCW1}.
\begin{prop}
\label{cor:wcsr1} 
Let $A$ be a $\CatCa$-algebra of stable rank one. Then $\Cu_1(A)$ has weak cancellation and a fortiori $\Cu_1(A)$ has cancellation of compact elements.
\end{prop}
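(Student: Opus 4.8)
The plan is to work throughout in the pair-picture of $\Cu_1(A)$ furnished by \autoref{prop:newpicture}, writing a general element as $(x,k)$ with $x\in\Cu(A)$ and $k\in\K_1(I_x)$, and to exploit the fact (\autoref{cor:compactcu1}) that compactness in $\Cu_1(A)$ is detected on the first coordinate alone. Since cancellation of compact elements is a formal consequence of weak cancellation — if $z$ is compact and $x+z\le y+z$, pick $x'\ll x$, use (O3) together with $z\ll z$ to get $x'+z\ll x+z\le y+z$, hence $x'+z\ll y+z$, apply weak cancellation to obtain $x'\le y$, and pass to the supremum over $x'\ll x$ via (O2) — it suffices to prove weak cancellation. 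So I assume $(x,k)+(z,m)\ll (y,l)+(z,m)$ in $\Cu_1(A)$ and aim to produce $(x,k)\le (y,l)$, that is, $x\le y$ in $\Cu(A)$ \emph{and} $\delta_{I_xI_y}(k)=l$ in $\K_1(I_y)$.

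The first coordinate is routine. By \autoref{prop:llCU1} the hypothesis forces $x+z\ll y+z$ in $\Cu(A)$, and since $A$ has stable rank one, $\Cu(A)$ has weak cancellation by the argument of \cite[Proposition 2.1.3]{RobNCCW1}; thus $x\le y$, whence $I_x\subseteq I_y\subseteq I_{y+z}$ and $I_{x+z}=I_x\vee I_z\subseteq I_{y+z}$. The second coordinate is where I expect the real work to be. Writing $W:=I_{y+z}$ and combining the addition rule of \autoref{dfn:newpicture} with the functoriality $\delta_{JK}\circ\delta_{IJ}=\delta_{IK}$ of the maps in \autoref{dfn:delta}, the $\K_1$-clause of the compact-containment relation (\autoref{prop:llCU1}) unwinds to the single identity
\[
\delta_{I_xW}(k)+\delta_{I_zW}(m)=\delta_{I_yW}(l)+\delta_{I_zW}(m)\qquad\text{in }\K_1(W).
\]
As $\K_1(W)$ is an abelian group I may cancel the common summand $\delta_{I_zW}(m)$, leaving $\delta_{I_xW}(k)=\delta_{I_yW}(l)$, equivalently $\delta_{I_yW}\bigl(\delta_{I_xI_y}(k)-l\bigr)=0$. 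Hence the whole proposition reduces to showing that the element $\delta_{I_xI_y}(k)-l\in\K_1(I_y)$, which lies in the kernel of the inclusion-induced map $\delta_{I_yW}\colon\K_1(I_y)\to\K_1(I_{y+z})$, must vanish.

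This descent of a $\K_1$-equality from the ambient ideal $I_{y+z}$ down to $I_y$ is the main obstacle: the homomorphism $\delta_{I_yW}=\K_1(\iota)$ attached to an ideal inclusion $\iota\colon I_y\hookrightarrow I_{y+z}$ is \emph{not} injective in general, so one cannot simply read off the answer from the displayed equation. The route I would pursue to close the gap is to go back to the $\CatCa$-level and re-run Robert's cancellation argument while transporting the unitary datum: the compact containment $x+z\ll y+z$ over a stable rank one algebra should, via \cite[Lemma 2.4]{CES11} and a cancellation of the common Hilbert-module summand $\langle z\rangle$, produce a partial isometry implementing a standard map $\her(x)\hookrightarrow\her(y)$ that simultaneously carries $u$ to a unitary of $\her(y)^\sim$ homotopic to $v$; feeding this through \autoref{lma:invarK1inj} together with the $\K_1$-bijectivity of \cite[Theorem 2.10]{R87} for the stable rank one algebra $\her(y)^\sim$ would yield $\delta_{I_xI_y}(k)=l$ directly inside $\K_1(I_y)$, circumventing the non-injective map $\delta_{I_yW}$. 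The decisive point to verify — and, I expect, the hardest — is precisely that the cancellation of the $\langle z\rangle$-summand can be arranged so as to preserve the $\K_1$-class of the transported unitary; this is exactly the step on which the strength of the statement rests.
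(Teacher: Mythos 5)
Your reduction of cancellation of compact elements to weak cancellation, and your treatment of the first coordinate, are fine and are all that the paper itself actually supplies: its entire proof is the remark that ``the same argument as in \cite[Proposition 2.1.3]{RobNCCW1}'' applies, and Robert's argument is precisely the Hilbert-module cancellation that yields $x\le y$ in $\Cu(A)$. It says nothing about transporting the unitary datum. You have therefore correctly isolated what any complete proof must add, namely the descent of the identity $\delta_{I_xW}(k)=\delta_{I_yW}(l)$ in $\K_1(W)$, $W:=I_{y+z}$, to the identity $\delta_{I_xI_y}(k)=l$ in $\K_1(I_y)$ --- but you do not carry out this descent; you only sketch a strategy and flag it as unverified. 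As it stands your proposal establishes only the first-coordinate half of the claim, so there is a genuine gap.

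The gap is not merely technical, and the repair you sketch cannot close it: the kernel of $\delta_{I_yW}$ is a real obstruction, not an artifact of the bookkeeping. Concretely, take $A=\mathcal{C}(\T)$, let $V_1,V_2$ be disjoint open arcs, let $y=1_{V_1}+1_{V_2}$, let $x=1_{V'}$ for an arc $V'$ compactly contained in $V_1$, and let $z=[1_A]$, so that $x+z\ll y+z$ in $\Lsc(\T,\overline{\N})$ and $W=I_{y+z}=\mathcal{C}(\T)$. By \autoref{lma:computecircle}, $\K_1(I_y)\simeq\Z\oplus\Z$ with one summand per arc, and each arc inclusion $\mathcal{C}_0(V_i)\hookrightarrow\mathcal{C}(\T)$ induces an isomorphism on $\K_1$, so $\delta_{I_yW}\colon\Z^2\to\Z$ is $(a,b)\mapsto a+b$ and has kernel generated by $(1,-1)$. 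Choosing $k=0$, $m=0$ and $l=(1,-1)$ gives $(x,k)+(z,m)\ll (y,l)+(z,m)$ (both second coordinates vanish in $\K_1(\mathcal{C}(\T))$), while $\delta_{I_xI_y}(k)=(0,0)\neq l$, so $(x,k)\not\le(y,l)$. Since this $z$ is compact, the same triple defeats cancellation of compact elements in the stated generality. So your instinct that the preservation of the $\K_1$-class under cancellation of the $\langle z\rangle$-summand is ``exactly the step on which the strength of the statement rests'' is correct, but the honest conclusion is that this step fails without further hypotheses (for instance injectivity of $\delta_{I_yI_{y+z}}$, which does hold when $I_y=I_{y+z}$, and which is what the later applications to $\Cu_1(A)_c$ and to the positive cone actually rely on); it is not a step one can expect to verify by rerunning Robert's argument more carefully.
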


\begin{dfn}
Let $S$ be a positively directed $\Cu^\sim$-semigroup. Suppose that $S$ has cancellation of compact elements. Also suppose that $S_+$ admits a compact order-unit. 

We say that $(S,u)$ is a \emph{$\Cu^\sim$-semigroup with compact order-unit}. Now, a \emph{$\Cu^\sim$-morphism} between two $\Cu^\sim$-semigroups with compact order-unit $(S,u),(T,v)$ is a $\Cu^\sim$-morphism $\alpha:S\longrightarrow T$ such that $\alpha(u)\leq v$.

We define the category of $\Cu^\sim$-semigroups with compact order-unit, denoted $\Cu^\sim_{u}$, as the category whose objects are $\Cu^\sim$-semigroups with order-unit and morphisms are $\Cu^\sim$-morphisms that preserve the order-unit.
\end{dfn}

\begin{lma}
The assignment \vspace{-0,7cm}\[
	\begin{array}{ll}
	\hspace{0,4cm} \Cu_{1,u}: \CatCa_{sr1,1}\longrightarrow \Cu^\sim_{u} \\
	\hspace{2cm} A\longmapsto (\Cu_1(A),([1_A],0))\\
		\hspace{2cm} \phi \longmapsto \Cu_1(\phi)
	\end{array}
\] 
from the category of unital $\CatCa$-algebras of stable rank one, denoted by $\CatCa_{sr1,1}$, to the category $\Cu^\sim_u$ is a covariant functor.
\end{lma}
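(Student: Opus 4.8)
The plan is to observe that almost all the required content is already in place, so the proof reduces to two verifications plus a triviality. Being a functor into $\Cu^\sim$ is known from \autoref{prop:cu1functor} and the corollary following it, so the only genuinely new points are: (a) that the designated element $([1_A],0)$ equips $\Cu_1(A)$ with the structure of an object of $\Cu^\sim_{u}$, and (b) that each $\Cu_1(\phi)$ is a morphism in $\Cu^\sim_{u}$, i.e.\ dominates the chosen order-unit. Preservation of identities and composites will then be inherited verbatim from the functoriality of $\Cu_1$, since the order-unit conditions are visibly stable under composition.

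First I would check that $(\Cu_1(A),([1_A],0))$ is a well-defined object of $\Cu^\sim_{u}$. We already know $\Cu_1(A)\in\Cu^\sim$; it is positively directed by the lemma establishing that $\Cu_1(A)$ is positively directed; and it has cancellation of compact elements by \autoref{cor:wcsr1}. It then remains to argue that $([1_A],0)$ is a compact order-unit of the positive cone. Here I would use that $A$ is unital, so $1_A$ is a projection and hence $[1_A]$ is a compact element of $\Cu(A)$, and that $1_A$ is a full (indeed strictly positive) element of $A$, so $[1_A]$ is an order-unit of $\Cu(A)$. Transporting this through the identification $\Cu_1(A)_+\simeq\Cu(A)$ of \autoref{thm:naturaltransfolma}, and invoking \autoref{cor:compactcu1} (compactness of $([1_A],0)$ in $\Cu_1(A)$ is equivalent to compactness of $[1_A]$ in $\Cu(A)$), I would conclude that $([1_A],0)$ is the required compact order-unit.

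Next I would verify the morphism condition. For a $^*$-homomorphism $\phi\colon A\longrightarrow B$, \autoref{prop:cu1functor} already yields that $\Cu_1(\phi)$ is a $\Cu^\sim$-morphism, so the only thing to add is $\Cu_1(\phi)([1_A],0)\leq([1_B],0)$. Using the picture of \autoref{prg:newpicture} I would compute $\Cu_1(\phi)([1_A],0)=([\phi(1_A)],0)$; since $\phi(1_A)$ is a projection with $\phi(1_A)\leq 1_B$, one obtains $[\phi(1_A)]\leq[1_B]$ in $\Cu(B)$, and as both $\K_1$-coordinates vanish the order relation of \autoref{prg:deltas} gives $([\phi(1_A)],0)\leq([1_B],0)$ in $\Cu_1(B)$. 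The main obstacle, such as it is, is purely one of bookkeeping: confirming that $([1_A],0)$ is simultaneously compact and an order-unit, and recognizing that morphisms in $\Cu^\sim_{u}$ are only asked to \emph{dominate} rather than preserve the order-unit. This is precisely what accommodates possibly non-unital $^*$-homomorphisms, for which $\phi(1_A)$ may be a proper subprojection of $1_B$; no new estimate or construction beyond the cited results is needed.
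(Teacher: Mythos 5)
Your proposal is correct and follows essentially the same route as the paper's (much terser) proof: verify that $\Cu_1(A)$ is positively directed with cancellation of compact elements and that $([1_A],0)$ is a compact order-unit of the positive cone, then check $\Cu_1(\phi)([1_A],0)\leq([1_B],0)$; you simply fill in the details the paper dismisses as "easy" and "trivial". Your observation that morphisms in $\Cu^\sim_u$ need only dominate the order-unit, accommodating non-unital $^*$-homomorphisms, is exactly the right reading of the definition.
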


\begin{proof}
We know that $\Cu_1(A)_+$ has cancellation of compact elements. Further, we know that $([1_A],0)$ is a compact order-unit of $\Cu_1(A)_+$, so it easily follows that $\Cu_{1,u}(A)\in \Cu^\sim_{u}$. Finally, it is trivial to see that $\Cu_1(\phi)([1_A])\leq [1_B]$, which ends the proof.
\end{proof}

\begin{lma}
\label{lma:NIKISONMIFIT}
The assignment
\vspace{-0,7cm}\[
	\begin{array}{ll}
	\hspace{1,7cm} H_*:\Cu^\sim_{u}\longrightarrow \AbGp_{u}\\
	\hspace{2,2cm} (S,u)\longmapsto (\Gr(S_{c}),S_{c},u)\\
		\hspace{2,8cm} \alpha \longmapsto \Gr(\alpha_{c})
	\end{array}
\] 
from the category $\Cu^\sim_{u}$ to the category $\AbGp_{u}$ is a covariant functor.

Moreover, if we restrict the domain of $H_*$ to the category of algebraic $\Cu^\sim_u$-semigroups with compact order-unit, denoted by $\Cu^\sim_{u,alg}$, then $H_*$ becomes a faithful functor.
\end{lma}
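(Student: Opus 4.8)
The plan is to recognise $H_*$ as essentially the composite of two functors that are (almost) already at hand: the functor $\nu_c\colon\Cu^\sim\to\CatoM$ sending $S\mapsto S_c$ and a $\Cu^\sim$-morphism $\alpha$ to its restriction $\alpha_c$, followed by the Grothendieck-group functor applied to the ordered monoid $S_c$. The role of the standing hypotheses defining $\Cu^\sim_u$ is precisely to guarantee that this composite lands in $\AbGp_u$: cancellation of compact elements makes $S_c$ a cancellative ordered monoid, positive directedness together with the compact order-unit on $S_+$ produces the order-unit on the group, and the fact that $\Cu^\sim$-morphisms carry $S_c$ into $T_c$ (and respect $\leq$) makes $\Gr(\alpha_c)$ an ordered group morphism.

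First I would check well-definedness on objects. Given $(S,u)\in\Cu^\sim_u$, the set $S_c$ is an ordered monoid, and since $S$ has cancellation of compact elements, $S_c$ is cancellative: if $x+z=y+z$ with $x,y,z\in S_c$ then $x\leq y$ and $y\leq x$, so $x=y$. Hence the canonical map $\iota_S\colon S_c\hookrightarrow\Gr(S_c)$ is injective, and we take the image of $S_c$ to be the positive cone of $\Gr(S_c)$, which makes $\Gr(S_c)$ an ordered abelian group. It remains to see that $u$ is an order-unit: for $g=x-y$ with $x,y\in S_c$, one uses that $u$ is a compact order-unit of $S_+$ to dominate the positive part of $g$ by a multiple $nu$, and positive directedness to absorb the remainder into the positive cone, so that $-nu\leq g\leq nu$ in $\Gr(S_c)$ for $n$ large. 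This order-unit verification is the main obstacle of the object part, as it is the one place where the interplay between the order on $S$ and the coarser group order on $\Gr(S_c)$ must be handled with care. On morphisms, $\Gr(\alpha_c)$ sends the positive cone $S_c$ into $T_c$ and satisfies $\Gr(\alpha_c)(u)=\alpha(u)\leq v$, so it is an $\AbGp_u$-morphism; functoriality is then immediate from $(\beta\circ\alpha)_c=\beta_c\circ\alpha_c$ and functoriality of the Grothendieck construction.

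For the second assertion, suppose $S\in\Cu^\sim_{u,alg}$ and let $\alpha,\beta\colon(S,u)\to(T,v)$ be $\Cu^\sim_u$-morphisms with $\Gr(\alpha_c)=\Gr(\beta_c)$. Restricting this equality along the injection $\iota_S$ and using that $\iota_T\colon T_c\hookrightarrow\Gr(T_c)$ is also injective (again by cancellation of compacts in $T$), I obtain $\alpha_c=\beta_c$, i.e. $\alpha$ and $\beta$ agree on all of $S_c$. Now algebraicity enters: every $s\in S$ is the supremum of an increasing sequence $(x_n)_n$ in $S_c$, and since $\Cu^\sim$-morphisms preserve suprema of increasing sequences, $\alpha(s)=\sup_n\alpha(x_n)=\sup_n\beta(x_n)=\beta(s)$. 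Hence $\alpha=\beta$ and $H_*$ is faithful on $\Cu^\sim_{u,alg}$. The only subtlety here is to observe that algebraicity plus continuity of morphisms are exactly the two ingredients that reduce equality on $S$ to equality on the compact part, the latter being detected by $\Gr$ because $S_c$ embeds into its Grothendieck group.
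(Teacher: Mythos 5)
Your proposal is correct and follows essentially the same route as the paper: cancellation of compact elements makes $S_c$ a cancellative ordered monoid so that $(\Gr(S_c),S_c,u)$ lands in $\AbGp_{u}$, functoriality is inherited from $\nu_c$ composed with the Grothendieck construction, and faithfulness on the algebraic subcategory follows because $\Gr(\alpha_c)=\Gr(\beta_c)$ forces $\alpha_c=\beta_c$ and sup-preserving morphisms are determined by their values on compact elements. You are in fact slightly more explicit than the paper on the two points it leaves to the reader, namely the injectivity of $S_c\hookrightarrow\Gr(S_c)$ needed to recover $\alpha_c=\beta_c$, and the order-unit verification.
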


\begin{proof}
Let $(S,u)\in\Cu^\sim_{u}$. By \autoref{cor:wcsr1}, we know that $S_c$ is a monoid with cancellation and hence, using the Grothendieck construction, one can check that $(\Gr(S_{c}),S_{c},u)$ is an ordered group with order-unit.
Now let $\alpha:S\longrightarrow T$ be a $\Cu^\sim_u$-morphism between two $\Cu^\sim$-semigroups with order-unit $(S,u),(T,v)$. By functoriality of $\nu_c$, it follows that $\alpha_{c}:S_{c}\longrightarrow T_{c}$ is a $\CatoM$-morphism, and hence that $\Gr(\alpha_{c}):\Gr(S_{c})\longrightarrow \Gr(T_{c})$ is a group morphism such that $\Gr(\alpha_{c})(S_{c})\subseteq T_{c}$. Finally, using that $\alpha(u)\leq \alpha(v)$, we obtain $\Gr(\alpha_{c})(u)\leq v$. We conclude that $H_*$ is a well-defined functor.

Now, we have to show that if we restrict the domain of $H_*$ to $\Cu^\sim_{sc,alg}$, then $H_*$ becomes faithful. Let $\alpha,\beta:(S,u)\longrightarrow (T,v)$ be two scaled $\Cu^\sim$-morphisms between $(S,u),(T,v)\in \Cu^\sim_{sc,alg}$ such that $H_*(\alpha)=H_*(\beta)$. In particular, $\alpha_c=\beta_c$, and since we are in the category of algebraic $\Cu^\sim$-semigroups, any element is the supremum of an increasing sequence of compact elements. Thus any morphism is entirely determined by its restriction to compact elements. One can conclude that $\alpha=\beta$ and the proof is complete.
\end{proof}

\begin{thm}
The functor $H_*:\Cu^\sim_{u}\longrightarrow \AbGp_{u}$ yields a natural isomorphism $\eta_*:H_*\circ\Cu_{1,u}\simeq \K_*$.
\end{thm}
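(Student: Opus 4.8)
The plan is to produce, for each unital $A$ of stable rank one, an isomorphism $\eta_{*,A}\colon H_*(\Cu_{1,u}(A))\to\K_*(A)$ in $\AbGp_u$ and then to check naturality. By definition $H_*(\Cu_{1,u}(A))=(\Gr(\Cu_1(A)_c),\Cu_1(A)_c,([1_A],0))$, so it suffices to exhibit an isomorphism of ordered monoids with unit $\Phi\colon\Cu_1(A)_c\to\K_*(A)_+$ and pass through the Grothendieck construction. Note first that $\K_*(A)_+$ is a cancellative submonoid that generates $\K_*(A)$: stable rank one gives $V(A)=\Cu(A)_c$ with $\Gr(V(A))=\K_0(A)$, and a full rank-$n$ projection realizes all of $\K_1(A)$ in its corner, so $\Gr(\K_*(A)_+)=\K_*(A)$ with positive cone $\K_*(A)_+$ and order-unit $1_{\K_*(A)}=([1_A],0)$. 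Hence once $\Phi$ is built, $\eta_{*,A}:=\Gr(\Phi)$ will be the required ordered-group isomorphism.

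First I would describe $\Cu_1(A)_c$ concretely. By \autoref{cor:compactcu1} an element is compact iff its first coordinate is compact in $\Cu(A)$, and in stable rank one these are exactly the classes $[p]$ of projections $p\in A\otimes\mathcal{K}$. Using the picture of \autoref{prop:newpicture} (see \autoref{prg:newpicture}), a compact element is then a pair $([p],k)$ with $k\in\K_1(I_p)\simeq\K_1(\her(p))$, and since the corner $\her(p)=p(A\otimes\mathcal{K})p$ has stable rank one, $\K_1$-surjectivity represents $k$ by a unitary $v$ of that corner. I define $\Phi([p],k):=([p]_{\K_0(A)},[v]_{\K_1(A)})$, which by construction equals $([p]_{\K_0},\delta_{I_pA}(k))$ and thus lands in $\K_*(A)_+$. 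That $\Phi$ is additive, order-preserving and unital follows from the block description of addition together with \autoref{cor:deltas} and \autoref{prg:deltas}: orthogonal sums of projections and block-diagonal unitaries implement $+$, and the identity $\delta_{I_pA}\circ\delta_{I_{p'}I_p}=\delta_{I_{p'}A}$ (functoriality of inclusion-induced maps, \autoref{dfn:delta}) gives compatibility with addition and order; moreover $\Phi([1_A],0)=1_{\K_*(A)}$.

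The main obstacle is to show that $\Phi$ is a bijection. Surjectivity is immediate, as any $([p]_{\K_0},[v]_{\K_1(A)})$ is the image of $([p],\delta_p[v])$. Injectivity is the heart of the matter: if $\Phi([p],k)=\Phi([p'],k')$, then equality of the $\K_0$-classes together with cancellation in $V(A)$ forces $[p]=[p']$ in $\Cu(A)$, whence $I_p=I_{p'}$ and $\delta_{I_pA}(k)=\delta_{I_pA}(k')$; one must then conclude $k=k'$ in $\K_1(I_p)$. This is exactly the injectivity of the inclusion-induced map $\delta_{I_pA}\colon\K_1(I_p)\to\K_1(A)$ for an ideal generated by a projection, equivalently (via the six-term sequence) the surjectivity of $\K_0(A)\to\K_0(A/I_p)$, i.e. the vanishing of the exponential map $\K_0(A/I_p)\to\K_1(I_p)$. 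I expect this to be the single nontrivial point, everything else being formal; it is where the stable rank one hypothesis must be genuinely used, and it is automatic in the real-rank-zero case treated later, where projections lift from the quotient.

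Finally I would verify naturality. Given $\phi\colon A\to B$, \autoref{lma:newpicturemorph} shows $\Cu_1(\phi)$ acts on compact elements by $([p],k)\mapsto(\Cu(\phi)[p],\K_1(\phi_{|I_p})(k))$. Restricting to $\Cu_1(A)_c$, applying $\Phi$, and invoking the naturality square relating $\delta_{I_pA}$, $\delta_{I_{\phi(p)}B}$, $\K_1(\phi_{|I_p})$ and $\K_1(\phi)$ (functoriality of $\K_1$ and of the inclusion maps) yields a map $\K_*(A)_+\to\K_*(B)_+$ equal to $\K_0(\phi)\oplus\K_1(\phi)$ on each coordinate. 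Passing through $\Gr$, and recalling $\Gr(\Cu(\phi)_{|V(A)})=\K_0(\phi)$, gives the commuting square $\eta_{*,B}\circ H_*(\Cu_{1,u}(\phi))=\K_*(\phi)\circ\eta_{*,A}$, so that $\eta_*$ is a natural isomorphism, as claimed.
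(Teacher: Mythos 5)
Your overall strategy coincides with the paper's: both arguments identify $\Cu_1(A)_c$ with classes $[(p,u)]$ of a projection $p\in A\otimes\mathcal{K}$ together with a unitary $u\in\her(p)^\sim$, send such a class to $([p]_{\K_0(A)},[u+(1-p)]_{\K_1(A)})=([p],\delta_{I_pA}([u]))\in{\K_*(A)}_+$, argue that this is an isomorphism of monoids with order-unit, and then pass through the Grothendieck construction and check naturality. Your treatment of surjectivity, additivity, the order-unit, and the naturality square is in line with what the paper does.

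The genuine gap is the injectivity step, which you correctly isolate but do not prove. You reduce it to the injectivity of $\delta_{I_pA}\colon\K_1(I_p)\to\K_1(A)$, equivalently to the vanishing of the exponential map $\K_0(A/I_p)\to\K_1(I_p)$, and then only write that you \emph{expect} this to be where stable rank one enters. That expectation cannot be taken for granted: what stable rank one is known to give is the vanishing of the \emph{index} map $\K_1(A/I)\to\K_0(I)$ (invertibles lift), whereas the vanishing of the exponential map is a projection-lifting phenomenon of real-rank-zero type, and nothing you cite supplies it merely because $I_p$ is generated by a projection. So, as written, the central step of your proof is missing. The paper closes this point by a different mechanism: \autoref{cor:wcsr1} asserts that $\Cu_1(A)$ has weak cancellation and hence cancellation of compact elements. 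Granting that, injectivity follows by adding the full compact element $[(1_A,1)]$ to both classes: if $\Phi([(p,u)])=\Phi([(q,v)])$, then $[(p\oplus 1_A,u\oplus 1)]$ and $[(q\oplus 1_A,v\oplus 1)]$ both sit over the full ideal $A$, where by \autoref{prg:deltas} the pair $\bigl([p\oplus 1_A],\delta_{I_pA}[u]\bigr)$ determines the class, so the two sums coincide, and one then cancels the compact summand $[(1_A,1)]$. To repair your argument you must either prove the injectivity of $\delta_{I_pA}$ for ideals generated by projections in the stable rank one setting, or route the injectivity through the cancellation property of \autoref{cor:wcsr1} as the paper does; neither is a formality, and your proposal currently does neither.
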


\begin{proof}
First we prove that ${\K_*(A)}_+\simeq {\Cu_1(A)}_c$ as monoids and the result will follow from the Grothendieck construction.

We know that ${\Cu_1(A)}_c$ is a monoid. Now consider $[(a,u)]\in\Cu_1(A)_c$. By \autoref{cor:compactcu1}, we know that $[a]$ is a compact element of $\Cu(A)$. Besides, since $A$ has stable rank one, we know that we can find a projection $p\in A\otimes\mathcal{K}$ such that $[p]=[a]$ in $\Cu(A)$. So without loss of generality, we now describe compact elements of $\Cu_1(A)$ as classes $[(p,u)]$ where $p$ is projection in $A\otimes\mathcal{K}$ and $u$ is a unitary element in $\her (p)$.

On the other hand, by \autoref{thm:naturaltransfolma}, we have $\Cu_1(A)_{max}\simeq \K_1(A)$, where the $\AbGp$-isomorphism is given by $[(s_{A\otimes\mathcal{K}},u)]\longmapsto [u]$, where $s_{A\otimes\mathcal{K}}$ is any strictly positive element of $A\otimes\mathcal{K}$. Combined with \autoref{dfn:vch}, we get a monoid morphism $j:\Cu_1(A)\longrightarrow \K_1(A)$. Now set:
\[
\begin{array}{ll}
\alpha:\Cu_1(A)_c\longrightarrow {\K_*(A)}_+\\
\hspace{0,7cm}[(p,u)]\longmapsto ([p],j([p,u]))
\end{array}
\]
It is routine to check that $\alpha$ is monoid morphism. Further, observe that $j([p,u])=\delta_{I_pA}([u])$ for any $[(p,u)]\in \Cu_1(A)_c$, where $\delta_{I_pA}:\K_1(\her (p))\overset{\K_1(i)}\longrightarrow \K_1(A)$ (see \autoref{dfn:delta}). Thus, $j([p,u])=[u+(1-p)]_{\K_1(A)}$. Now, since $A$ has stable rank one, Murray-von Neumann equivalence and Cuntz equivalence agree on projections. It is now clear that $\alpha$ is an isomorphism and hence $\Cu_1(A)_c\simeq {\K_*(A)}_+$ as monoids. From the Grothendieck construction, one can check that $(\K_*(A),{\K_*(A)}_+)\simeq (\Gr(\Cu_1(A)_c),\Cu_1(A)_c)$ as ordered groups. Finally, it is routine to check that $[(1_A,1_A)]$ is a compact order-unit for $\Cu_1(A)$ (a fortiori, an order-unit for $(\Gr(\Cu_1(A)_c),\Cu_1(A)_c)$) and that $\alpha([(1_A,1_A)])=1_{\K_*(A)}$. 

We conclude that for any $A\in \CatCa_{sr1,1}$, there exists a natural ordered group isomorphism ${\eta_*}_A:H_*\circ\Cu_{1,u}(A)\simeq (\K_*(A),{\K_*(A)}_+,1_{\K_*(A)})$ that preserves the order-unit and hence there exists a natural isomorphism $\eta_*: H_*\circ\Cu_{1,u}\simeq \K_*$. 
\end{proof}

\begin{cor}
By restricting to the category $\Cu^\sim_{u,alg}$, we can fully recover $\K_*$ from $\Cu_{1,u}$ through $H_*$. A fortiori, we have:

(i) $\Cu_{1,u}$ is a complete invariant for unital $AH_{d}$ algebras of real rank zero.

(ii) $\Cu_{1,u}$ classifies homomorphisms of unital $\A\!\T$ algebras with real rank zero.
\end{cor}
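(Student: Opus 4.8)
The plan is to invoke \autoref{thm:recoverfunctor} with $I:=\K_*$, $J:=\Cu_{1,u}$ and $H:=H_*$, all regarded as functors on the category $\CatCa_{sr1,1}$ of unital $\CatCa$-algebras of stable rank one. The preceding theorem furnishes the natural isomorphism $\eta_*\colon H_*\circ\Cu_{1,u}\simeq \K_*$, which is precisely the assertion that we recover $\K_*$ from $\Cu_{1,u}$ through $H_*$. To strengthen this to a \emph{full} recovery it remains, by \autoref{thm:recoverfunctor}, to guarantee faithfulness of $H_*$; and \autoref{lma:NIKISONMIFIT} tells us that $H_*$ is faithful once its domain is restricted to $\Cu^\sim_{u,alg}$.

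Thus I would first ensure that the algebras in play are sent by $\Cu_{1,u}$ into $\Cu^\sim_{u,alg}$. If $A$ is a unital $\AH_d$ algebra of real rank zero, then $A$ has stable rank one, and by the corollary characterizing real rank zero (namely, $A$ has real rank zero if and only if $\Cu_1(A)\in\Cu^\sim_{alg}$) the semigroup $\Cu_1(A)$ is algebraic; hence $\Cu_{1,u}(A)\in\Cu^\sim_{u,alg}$. The same reasoning applies to unital $\A\!\T$ algebras of real rank zero, since $\A\!\T\subseteq \AH_d$. Consequently, once we restrict all three functors to the class of unital $\AH_d$ (respectively $\A\!\T$) algebras of real rank zero, the functor $H_*$ acts between objects and morphisms lying in $\Cu^\sim_{u,alg}$, where it is faithful, so the ``moreover'' hypothesis of \autoref{thm:recoverfunctor} is met. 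This establishes the first sentence: restricting to $\Cu^\sim_{u,alg}$, we can fully recover $\K_*$ from $\Cu_{1,u}$ through $H_*$.

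For (i), the Elliott classification result recalled above states that $\K_*$ is a complete invariant for unital $\AH_d$ algebras of real rank zero. Since we have full recovery, the faithful conclusion of \autoref{thm:recoverfunctor}(i) yields that $\Cu_{1,u}$ is a complete invariant for the same class. For (ii), the corresponding Elliott result asserts that $\K_*$ classifies $^*$-homomorphisms between unital $\A\!\T$ algebras of real rank zero up to approximate unitary equivalence; the faithful conclusion of \autoref{thm:recoverfunctor}(ii) then gives the analogous statement for $\Cu_{1,u}$.

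The only point requiring care is the bookkeeping of domains: \autoref{thm:recoverfunctor} is phrased for a functor $H$ that is faithful on its whole codomain, whereas $H_*$ is faithful only on the subcategory $\Cu^\sim_{u,alg}$. This gap is harmless precisely because the real rank zero hypothesis forces every $\Cu_{1,u}(A)$ occurring in (i) and (ii) to land in $\Cu^\sim_{u,alg}$, so one never leaves the region where faithfulness holds and no information is lost when comparing morphisms after applying $H_*$. With this observation the corollary is a direct consequence of the recovery machinery of \autoref{thm:recoverfunctor} together with the previously established natural isomorphism $\eta_*$.
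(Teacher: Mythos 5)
The paper gives no explicit proof of this corollary, but your argument is exactly the intended one: combine the natural isomorphism $\eta_*$, the faithfulness of $H_*$ on $\Cu^\sim_{u,alg}$, the characterization of real rank zero via algebraicity of $\Cu_1(A)$, and the recovery machinery of \autoref{thm:recoverfunctor} together with the cited Elliott classification theorems. Your additional remark on why restricting faithfulness to the subcategory suffices is a correct and worthwhile clarification of a point the paper leaves implicit.
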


\end{document}